\documentclass[a4paper,11pt,reqno]{amsart}

\usepackage{amsmath,amssymb,amsthm,mathtools}
\usepackage{mathrsfs}
\usepackage{bm}
\usepackage{geometry}
\usepackage{enumitem}
\usepackage{hyperref}
\usepackage{color}
\usepackage[pagewise,displaymath,mathlines]{lineno}

\newtheorem{theorem}{Theorem}[section]

\newtheorem{proposition}[theorem]{Proposition}
\newtheorem{lemma}[theorem]{Lemma}
\newtheorem{corollary}[theorem]{Corollary}
\newtheorem{remark}[theorem]{Remark}
\newtheorem{definition}[theorem]{Definition}

\newtheorem{notation}[theorem]{Notations}
\newcommand{\C}{\mathbb C}

\newcommand{\ord}{\operatorname{ord}}
\newcommand{\detm}{\operatorname{det}}

\begin{document}

%\linenumbers
	
	\bigskip

	\title[Complex evolutoids of holomorphic plane curves]{Complex evolutoids of holomorphic plane curves}

 \author[Deolindo-Silva]{Jorge Luiz Deolindo-Silva}
 \thanks{Corresponding author: Jorge Luiz Deolindo-Silva
(\texttt{jorge.deolindo@ufu.br}).}
 
	 \address[Jorge]{Universidade Federal de Uberlândia - Brazil.}
	 \email{jorge.deolindo@ufu.br}

	 \author[Salarinoghabi]{Mostafa Salarinoghabi}
	 \address[Mostafa]{Universidade Federal de Uberlândia - Brazil.}
	 \email{mostafa@ufu.br}

% 	\author[]{}
% 	\address[]{}
% 	\email{}

\subjclass[2020]{Primary 58K40, 53A04, 32S05; Secondary 58K60, 53D12, 53C56}

	\keywords{Complex evolutoids, Holomorphic wavefronts,  Isotropic points, Complex curvature, Caustics and evolutes.}
%%%%%%%%%%%%%%%%%%%%%%%%%%%%%%%%%%%%%%%%%%
\begin{abstract}
We study the local geometry, singularity classification, and wavefront dynamics of complex evolutoids for regular holomorphic curves in $\mathbb{C}^2$ equipped with the standard complex bilinear metric. By integrating the generating family of rotated complex affine lines, we derive an explicit parametrization for the 1-parameter family of complex wavefronts $\Gamma^{\theta, w}$ and establish that their singular cusp locus sweeps out the complex evolutoid $E^\theta$. Using a square-root-free formulation, we classify the local behavior of $E^\theta$, proving that evolutoids remain regular and tangent to the curve at isotropic points ($q=0$) while escaping to infinity at complex inflections ($\kappa=0$). Finally, we analyze the three-dimensional complex discriminant surface $\mathcal{D}_F \subset \mathbb{C}^3$ formed by the evolving family $E^\theta$. We prove that $A_2$ cusps are versally unfolded by spatial parameters, whereas $A_3$ swallowtail singularities are holomorphically versally unfolded by $(x_1, x_2, \theta)$ if and only if the complex non-degeneracy condition $\kappa^4 + \kappa_s^2 \neq 0$ holds. Crucially, we demonstrate that simple isotropic points drive the breakdown of this versality condition, inducing degenerate, non-transverse sections of the complex swallowtail surface.
\end{abstract}
\maketitle
\section{Introduction}\label{sec:intro}
The geometry of evolutes and caustics has a rich history dating back to Huygens' foundational study of wave propagation and optical caustics (for more details, see for example \cite{Di, St}). In classical differential geometry, the evolute of a smooth plane curve $\gamma$ is defined equivalently as the locus of its centers of curvature, the envelope of its normal lines, or the singular locus (caustic) of its family of parallel wavefronts \cite{Arnold1990, BruceGiblin1992}. A natural generalization, introduced by  Réaumur is given in \cite{Reaumur}. In 2014, Giblin and Warder in a study given in \cite{GiblinWarder2014}, consider the envelope of lines rotated by a constant angle $\theta$ relative to the curve's normal frame. The resulting 1-parameter family of envelopes, termed \emph{evolutoids}, interpolates smoothly between the curve itself ($\theta = 0$) and its classical evolute ($\theta = \pi/2$).

While real evolutoids and their associated wavefronts are well-understood through singularity theory and catastrophe theory (see \cite{Izumia2019, Aguilar, Izumia2020, Ady} for instanse), extending this framework to complex analytic curves $\gamma : \mathbb{C} \to \mathbb{C}^2$ introduces profound geometric and topological phenomena. Standard Hermitian metrics fail to yield holomorphic envelopes because the Hermitian inner product is non-holomorphic. Following the framework established by Falqueto and Tari \cite{FalquetoTari2026}, the appropriate setting for complex curve geometry requires the complex bilinear symmetric form 
\[
\langle z, w \rangle = z_1 w_1 + z_2 w_2, \qquad z, w \in \mathbb{C}^2.
\]

The presence of non-zero isotropic vectors (i.e., $\langle v, v \rangle = 0$ for $v \neq \mathbf{0}$) in this non-Hermitian setting fundamentally alters local envelope dynamics,
\begin{enumerate}
    \item Isotropic points ($q(t) = 0$): Points where the tangent vector is isotropic (i.e., $\langle \gamma'(t), \gamma'(t) \rangle = 0$) cause the complex curvature $\kappa(t)$ to diverge. Nevertheless, the square-root-free parametrization of the evolutoid remains holomorphic and touches $\gamma$ tangentially with higher order contact \cite{FukunagaTakahashi2013}.
    \item Complex inflections ($\kappa(t) = 0$): At ordinary inflections where $\kappa(t_0) = 0$, the finite evolutoid escapes to the line at infinity $\ell^\infty \subset \mathbb{CP}^2$, forming a simple pole in $\mathbb{C}^2$.
    \item Isotropic inflections ($q(t_0) = 0, \Delta(t_0) = 0$): Points where the metric function and Wronskian vanish simultaneously represent delicate local degeneracies whose extension behavior depends on the relative vanishing orders $\operatorname{ord}_{t_0}(q)$ and $\operatorname{ord}_{t_0}(\Delta)$.
\end{enumerate}

In this paper, we establish a comprehensive singularity and unfolding theory for complex evolutoids and their associated complex wavefronts. By viewing the evolution angle $\theta \in \mathbb{C}$ as an unfolding parameter, we analyze the 3-dimensional complex discriminant surface
\[
\mathcal{D}_F = \bigcup_{\theta \in \mathbb{C}} \left( E^\theta \times \{\theta\} \right) \subset \mathbb{C}^3.
\]
We prove that while $A_2$ cuspidal edges are generically versally unfolded by spatial parameters alone, $A_3$ higher-order singularities yield complex \emph{Swallowtail} surfaces whose 2D sections display \emph{Lips} and \emph{Beaks} transitions. Crucially, we identify a complex-analytic failure of versality governed by the condition $\kappa^4 + \kappa_s^2 = 0$ ( where $\kappa$ and $\kappa_s$ are respectively the curvature of the curve and its derivative with respect to the arc-length parameter $s$), and demonstrate that simple isotropic points provide the primary geometric mechanism driving this complex degeneracy.

The paper is organized as follows. After a brief preliminaries in Section~\S\ref{sec:pre}, in \S\ref{sec:evolutoid} and \S\ref{sec:local} we review the complex bilinear frame, the square-root-free evolutoid formulation, and the local classification theorem. 
%%%%%%%%%%%%%%%%%%%%%%%%%%%%%%%%%
A summary of the local classification of the finite complex evolutoid $E^\lambda$ is given in Table~\ref{tab:evolutoid_summary} and Theorem \ref{thm:mainlocal}.

\begin{table}[ht]
\centering
\small
\caption{Local behavior and singularity classification of the complex evolutoid $E^\lambda$. $\kappa$ is the curvature of the curve, and $q, c^\lambda,\Delta$ are as in \eqref{eq:q}, \eqref{eq:clambda} and \eqref{eq:Delta} respectively. }
\label{tab:evolutoid_summary}
\vspace{2mm}
\begin{tabular}{|lll|}
\hline
\textbf{Condition at } $t_0$ & \textbf{Behavior of } $E^\lambda$ & \textbf{Generic local type} \\
\hline
$\kappa \neq 0, \; c^\lambda \neq 0$ & Regular & Smooth germ \\
$\kappa \neq 0, \; c^\lambda = 0, \; (c^\lambda)' \neq 0$ & $(E^\lambda)'(t_0) = \mathbf{0}$ & Ordinary $A_2$ cusp $(u^2, u^3)$ \\
$\kappa \neq 0, \; \ord_{t_0}(c^\lambda) = m > 1$ & $(E^\lambda)'(t_0) = \dots = (E^\lambda)^{(m)}(t_0) = \mathbf{0}$ & Higher cusp $(u^{m+1}, u^{m+2})$ \\
$\kappa = 0, \; q \neq 0, \; \lambda \neq 0$ & Pole of order $\ord_{t_0}(\kappa)$ & Point at infinity $\ell^\infty \subset \mathbb{CP}^2$ \\
$q = 0, \; \Delta \neq 0, \; \lambda^2 \neq -1$ & $E^\lambda(t_0) = \gamma(t_0)$ & Regular and tangent to $\gamma$ \\
$q = 0, \; \Delta \neq 0, \; \lambda = \pm i$ & Exceptional isotropic direction & Degenerate (no finite envelope) \\
$q = 0, \; \Delta = 0$ & Degenerate ratio $q/\Delta$ & Isotropic inflection \\
\hline
\end{tabular}
\end{table}
%\begin{main}
\begin{theorem}
[Local classification of the finite complex evolutoid]
\label{thm:mainlocal}
Let $\gamma : (\mathbb{C}, t_0) \to (\mathbb{C}^2, p_0)$ be a regular holomorphic curve, and let $\lambda \in \mathbb{C}$ with $\lambda^2 \neq -1$. The local geometric behavior of the finite $\lambda$-evolutoid $E^\lambda$ at $t_0$ is completely governed by the following mutually exclusive cases:

\begin{enumerate}[label=\textnormal{(\roman*)}]
    \item \textup{Regular point:} If $q(t_0) \neq 0$, $\kappa(t_0) \neq 0$, and
    \[
    1 - \lambda \frac{\kappa_s(t_0)}{\kappa(t_0)^2} \neq 0,
    \]
    then $E^\lambda$ is a regular holomorphic curve at $t_0$.

    \item \textup{Ordinary cusp ($A_2$-Singularity):} If $q(t_0) \neq 0$, $\kappa(t_0) \neq 0$, and
    \[
    1 - \lambda \frac{\kappa_s(t_0)}{\kappa(t_0)^2} = 0 \qquad \text{with} \qquad \left. \frac{d}{ds} \left( 1 - \lambda \frac{\kappa_s}{\kappa^2} \right) \right|_{s=s_0} \neq 0,
    \]
    then the germ of $E^\lambda$ at $t_0$ is an ordinary $A_2$ cusp, analytically equivalent to $u \mapsto (u^2, u^3)$.

    \item \textup{Higher-order cusp:} If $q(t_0) \neq 0$, $\kappa(t_0) \neq 0$, and the scalar function $c^\lambda(s)$ has a zero of order $m > 1$ at $s_0$, then the germ of $E^\lambda$ is a higher cusp of multiplicity $m+1$, parametrized by
    \[
    u \longmapsto \left( u^{m+1}, \, u^{m+2} + O(u^{m+3}) \right).
    \]

    \item \textup{Inflection point (escape to infinity):} If $q(t_0) \neq 0$ and $\kappa(t_0) = 0$, then for any non-zero $\lambda \neq 0$, $E^\lambda$ has a pole at $t_0$ whose order equals $\ord_{t_0}(\kappa)$, representing an escape to the line at infinity $\ell^\infty \subset \mathbb{CP}^2$.

    \item \textup{Ordinary isotropic touchdown:} If $q(t_0) = 0$ and $\Delta(t_0) \neq 0$, then $E^\lambda$ extends holomorphically through $t_0$ with $E^\lambda(t_0) = \gamma(t_0)$, and $E^\lambda$ is non-singular and tangent to $\gamma$ at $t_0$.

    \item \textup{Isotropic inflection:} If $q(t_0) = 0$ and $\Delta(t_0) = 0$, then $t_0$ is an isotropic inflection point. The limit of $E^\lambda(t)$ as $t \to t_0$ is determined by $\ord_{t_0}(q) - \ord_{t_0}(\Delta)$ and requires a dedicated normal-form analysis.

    \item \textup{Isotropic direction degeneracy:} If $\lambda = \pm i$, the ruling vector $D_{\pm i}(t) = T(t) \pm i N(t)$ is isotropic ($\langle D_{\pm i}, D_{\pm i} \rangle = 0$), causing the standard finite envelope construction to collapse.
\end{enumerate}
%\end{main}
\end{theorem}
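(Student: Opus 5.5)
We work throughout with the square-root-free formulation of the evolutoid. Write $\gamma(t)=(x(t),y(t))$, $q=\langle\gamma',\gamma'\rangle$, $\Delta=\detm(\gamma',\gamma'')$, and $J\gamma'=(-y',x')$, so that $\langle J\gamma',J\gamma'\rangle=q$. Away from $q=0$ we have $T=\gamma'/\sqrt q$, $N=J\gamma'/\sqrt q$, $\kappa=\Delta/q^{3/2}$ and $d/ds=q^{-1/2}d/dt$. We take the $\lambda$-evolutoid to be the envelope of the lines through $\gamma$ with direction $D_\lambda=\lambda T+N$ (equivalently $\cos\theta\,T+\sin\theta\,N$ with $\lambda=\cot\theta$, matching $(vii)$ up to normalization).

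\textbf{Formula for $E^\lambda$.} Solving the envelope condition with the Frenet equations $T_s=\kappa N$, $N_s=-\kappa T$ gives
\[
E^\lambda=\gamma+\frac{1}{\kappa(1+\lambda^2)}\,(T+\lambda N)\cdot(\text{sign conventions}).
\]
Since $1/\kappa=q^{3/2}/\Delta$, the combination $\tfrac1\kappa T=q\gamma'/\Delta$ and $\tfrac1\kappa N=qJ\gamma'/\Delta$ are free of square roots. This yields
\[
E^\lambda=\gamma+\frac{q}{(1+\lambda^2)\Delta}\,(\gamma'+\lambda J\gamma').
\]
All seven cases are read off from this formula.

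\textbf{Differentiation and cases (i)--(iii).} Differentiating in $s$, the terms proportional to $T+\lambda N$ cancel against $\gamma_s=T$ up to a component along $N-\lambda T$. A direct computation should give
\[
E^\lambda_s=\frac{c^\lambda(s)}{1+\lambda^2}\,(\lambda T-N)\quad\text{(up to a nonzero constant)},\qquad c^\lambda=1-\lambda\frac{\kappa_s}{\kappa^2}.
\]
This is the key identity. Case (i) is then immediate, because $\lambda T-N\neq0$ is a non-isotropic frame vector; in fact $\langle \lambda T-N,\lambda T-N\rangle=1+\lambda^2\neq0$.

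For cases (ii)--(iii), suppose $c^\lambda$ vanishes to order $m$. Then $E^\lambda(s)-E^\lambda(s_0)=\int c^\lambda(\lambda T-N)/(1+\lambda^2)$. Expand $\lambda T-N$ using the Frenet equations: its derivative is $\kappa(\lambda N+T)$, which is independent of $\lambda T-N$ because $\lambda^2\neq-1$. In the frame $\{\lambda T_0-N_0,\ T_0+\lambda N_0\}$ the components therefore start at orders $m+1$ and $m+2$, with nonzero leading coefficients proportional to $(c^\lambda)^{(m)}$ and $\kappa(c^\lambda)^{(m)}$. A linear change of coordinates together with a reparametrization $u=s\cdot(\text{unit})$ gives $(u^{m+1},u^{m+2}+O(u^{m+3}))$. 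For $m=1$, the standard fact that $(u^2,u^3+\dots)$ is analytically equivalent to $(u^2,u^3)$ gives the $A_2$ normal form.

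\textbf{Cases (iv)--(vii).} In case (iv), with $q(t_0)\neq0$, we have $\ord\Delta=\ord\kappa$. The vector $\gamma'+\lambda J\gamma'$ is nonzero, being a nonzero combination of independent vectors, so $E^\lambda$ has a pole of exactly that order. Projectivizing shows that it tends to the point of $\ell^\infty$ in the direction $[\gamma'+\lambda J\gamma']$. The hypothesis $\lambda\neq0$ enters only to match the paper's convention; I would check where it is needed by comparing with the $\theta$-normalization.

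In case (v), $q/\Delta$ is holomorphic and vanishes at $t_0$, so $E^\lambda(t_0)=\gamma(t_0)$. For regularity and tangency, compute $(E^\lambda)'(t_0)=\gamma'+\frac{q'}{(1+\lambda^2)\Delta}(\gamma'+\lambda J\gamma')$. At an isotropic point $J\gamma'=\pm i\gamma'$, since $\gamma'$ is isotropic, say $\gamma'=(1,\pm i)$ up to scale. Hence $(E^\lambda)'(t_0)$ is a multiple of $\gamma'$, which gives tangency. The multiple is $1+\frac{q'(1\pm i\lambda)}{(1+\lambda^2)\Delta}$. Using $q'=2\langle\gamma',\gamma''\rangle$ and $\Delta=\detm(\gamma',\gamma'')$, with $\gamma'=(1,\epsilon i)$ one finds $q'=\mp 2i\epsilon\Delta$-type relations. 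These should reduce the multiple to $1\mp\frac{2i}{1\mp i\lambda}$ or similar, which is nonzero when $\lambda^2\neq-1$.

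This sign computation is the main obstacle. If the multiple vanished for some $\lambda$, the claim would need an extra exceptional value, so the computation has to be done carefully with explicit coordinates.

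Cases (vi) and (vii) are essentially definitional. In (vi) the coefficient $q/\Delta$ is a ratio of two vanishing functions whose order difference governs the limit. In (vii), $\langle T\pm iN,T\pm iN\rangle=1-1=0$, and the factor $1+\lambda^2$ in the denominator vanishes, so the envelope formula breaks down.

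Mutual exclusivity follows from the disjointness of the hypotheses on $(q,\kappa,c^\lambda,\Delta,\lambda)$.
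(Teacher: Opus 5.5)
Your plan of reading every case off the square-root-free envelope and a derivative identity matches the paper's. However, two of your formulas are wrong in substance, not merely ``up to sign conventions''.

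The statement fixes the ruling $D^\lambda=T+\lambda N$. That is what produces the singular condition $1-\lambda\kappa_s/\kappa^2=0$ and the isotropic rulings $T\pm iN$ in (vii). For the lines $\gamma+uD^\lambda$ the envelope condition is $\det(\gamma_s+uD^\lambda_s,D^\lambda)=\lambda-u\kappa(1+\lambda^2)=0$, so
\[
E^\lambda=\gamma+\frac{\lambda q}{(1+\lambda^2)\Delta}\,(\gamma'+\lambda J\gamma').
\]
Your version drops the factor $\lambda$ and coincides with the envelope only for $\lambda=1$. That factor is exactly where $\lambda\neq0$ is used in (iv), since $E^0=\gamma$ has no pole.

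Likewise, an envelope is tangent to its member lines, so $E^\lambda_s=c^\lambda\,(T+\lambda N)$ as in Proposition~\ref{thm:fundamental}. Your $\lambda T-N$ is orthogonal to the ruling. For (i)--(iii) this is repairable: your normal-form argument only uses $\det(V,V_s)\neq0$ for the vector field $V$ multiplying $c^\lambda$, and $\det\big(T+\lambda N,\kappa(N-\lambda T)\big)=\kappa(1+\lambda^2)\neq0$. With that correction your argument is the proof of Theorem~\ref{thm:localnormal}.

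The genuine gap is (v). You leave the decisive computation undone and assert that the multiple is non-zero, yet your own guess $1\mp 2i/(1\mp i\lambda)$ vanishes at $\lambda=-2\mp i$. Do it with the correct formula. If $\gamma'(t_0)=a(1,\varepsilon i)$, Lemma~\ref{lem:isotropic} gives $J\gamma'(t_0)=-\varepsilon i\,\gamma'(t_0)$ and Lemma~\ref{lem:qprime} gives $q'(t_0)=2\varepsilon i\,\Delta(t_0)$. Hence
\[
(E^\lambda)'(t_0)=\Big[1+\frac{2\varepsilon i\lambda\,(1-\varepsilon i\lambda)}{1+\lambda^2}\Big]\gamma'(t_0)=\frac{1+3\varepsilon i\lambda}{1+\varepsilon i\lambda}\,\gamma'(t_0).
\]
So $E^\lambda(t_0)=\gamma(t_0)$ and $(E^\lambda)'(t_0)\in\mathbb{C}\,\gamma'(t_0)$ hold for all $\lambda^2\neq-1$. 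Regularity, however, fails at $\lambda=\varepsilon i/3$, where $\lambda^2=-1/9\neq-1$.

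For instance, take $\gamma(t)=(t,\,it+t^2/2)$, which has $q=2it+t^2$, $\Delta\equiv1$ and $\varepsilon=1$. Then
\[
E^{i/3}(t)=\big(\tfrac{3i}{4}t^2+\tfrac{1}{8}t^3,\,-\tfrac{3}{4}t^2+\tfrac{3i}{8}t^3\big),
\]
an ordinary cusp at the isotropic point $t=0$. So your suspicion was right: the non-singularity claim in (v) cannot be proved as stated, and the value $\lambda=\varepsilon i/3$ must be excluded.

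The paper's Theorem~\ref{thm:isotropictangent} arrives at the never-vanishing factor $(1+\varepsilon i\lambda)/(1-\varepsilon i\lambda)$ only by a sign slip. It combines $J\gamma'(t_0)=\varepsilon i\,\gamma'(t_0)$ with $q'(t_0)=2\varepsilon i\,\Delta(t_0)$, and these hold for opposite choices of $\varepsilon$.
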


% \begin{proof}
% Statements (i) follows from the non-vanishing derivative $E^\lambda_{s} = c^\lambda(s) D^\lambda(s) \neq \mathbf{0}$. Statements (ii)--(vi) follow from the detailed local analyses established previously, i.e., Theorems and Propositions~\ref{thm:A2} ($m=1$), ~\ref{thm:highercusp} ($m > 1$), ~\ref{thm:inflectionnosing}, ~\ref{thm:isotropicregular}, ~\ref{prop:isotropicinflection} respectively. Finally, statement (vii) follows from the isotropic direction degeneration at $\lambda = \pm i$ ($\lambda^2 = -1$).
% \end{proof}
%%%%%%%%%%%%%%%%%%%%%%%%%
Furthermore, Section \S\ref{sec:wave} derives the integration of holomorphic wavefronts and establishes the duality between wavefront cusps and evolutoid loci. Section~\S\ref{sec:dis} analyzes the 3-dimensional discriminant surface $\mathcal{D}_F$, establishing $A_2$ and $A_3$ versality criteria alongside the geometric impact of isotropic points.
%%%%%%%%%%%%%%%%%%%%%%%%%%%%%%%%%%%%%%%%%%%
\section{Preliminaries}\label{sec:pre}
\begin{notation}
    Throughout, subscripts denote partial derivatives ($F_t, F_\theta$) and derivatives with respect to the complex arc-length coordinate $s$ ($\gamma_s, \kappa_s$), whereas primes denote derivatives with respect to an arbitrary parameter $t$ ($\gamma', q'$).
\end{notation}

Let $\gamma : (D, t_0) \longrightarrow (\C^2, p_0)$ be a regular holomorphic plane curve, where $D$ is a simply connected domain in $\C$. The purpose of this paper is to introduce and study a complex analogue of the evolutoids considered by Giblin and Warder for real plane curves \cite{GiblinWarder2014}.

The ambient space $\C^2$ is endowed with the complex bilinear Euclidean metric
\[
\langle v, w \rangle = v_1 w_1 + v_2 w_2, \qquad v = (v_1, v_2), \quad w = (w_1, w_2).
\]
This is a complex bilinear form rather than a Hermitian inner product. The use of the bilinear metric is essential to preserve holomorphicity throughout the geometric constructions.

The associated isotropic cone is defined by
\[
\mathcal{I} = \{v \in \C^2 : \langle v, v \rangle = 0\}.
\]
A non-zero vector $v = (v_1, v_2)$ is isotropic precisely when $v_1^2 + v_2^2 = 0$.

The three principal geometric phenomena considered in this paper correspond to:
\[
\kappa = 0 \quad \text{(inflections)}, \qquad 
\kappa' = 0 \quad \text{(vertices)}, \qquad 
\langle \gamma', \gamma' \rangle = 0 \quad \text{(isotropic points)},
\]
we will define these concepts explicitly in the next sections.
The central construction of this paper is a one-parameter family of complex lines obtained by taking linear combinations of the tangent and normal directions. Its envelope is defined as a \emph{complex evolutoid}.
In this regard, we first represent some basic definitions and preliminary results: 

Write $\gamma(t) = (z_1(t), z_2(t))$ and define the quadratic differential form
\begin{equation}\label{eq:q}
q(t) = \langle \gamma'(t), \gamma'(t) \rangle = z_1'(t)^2 + z_2'(t)^2.
\end{equation}
We also define the complex Wronskian determinant
\begin{equation}\label{eq:Delta}
\Delta(t) = \detm(\gamma'(t), \gamma''(t)) = z_1'(t) z_2''(t) - z_2'(t) z_1''(t).
\end{equation}
\begin{definition}\label{def:iso}
A point $t_0 \in D$ is called an \emph{isotropic point} of $\gamma$ if $q(t_0) = 0$.
\end{definition}
Define the standard complex linear rotation operator $J$ on $\C^2$ by
\[
J = \begin{pmatrix} 0 & -1 \\ 1 & 0 \end{pmatrix}.
\]
Then $J^2 = -I$ and $J$ preserves the complex bilinear metric $\langle Jv, Jw \rangle = \langle v, w \rangle$ with $v, w\in\C^2$.
%%%%%%%%%%%%%%%%%%%%%
\begin{theorem}\label{thm:frenet}
Suppose $t_0 \in D$ is a non-isotropic point, i.e., $q(t_0) \neq 0$. Then, restricted to a sufficiently small simply connected neighbourhood $U \subset D$ of $t_0$, there exists a holomorphic branch $\rho : U \longrightarrow \C$ such that $\rho^2 = q$.
The vector fields
\(
T = \frac{\gamma'}{\rho}, \, N = \frac{J\gamma'}{\rho}
\)
form a holomorphic orthonormal frame along $\gamma$, i.e.,
\(
\langle T, T \rangle = 1, ~ \langle N, N \rangle = 1, ~ \langle T, N \rangle = 0.
\)

Furthermore, with respect to the local complex arclength parameter $s$ satisfying $\displaystyle\frac{d}{ds} = \frac{1}{\rho}\frac{d}{dt}$, the complex Frenet formulas hold, i.e.,
\(
T_s = \kappa N, \, N_s = -\kappa T,
\)
where the complex curvature $\kappa$ is given by
\begin{equation}\label{eq:kappa}
\kappa = \frac{\Delta}{q^{3/2}}.
\end{equation}
\end{theorem}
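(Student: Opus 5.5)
The proof is a direct computation in three steps: choose a square root of $q$, verify orthonormality of the frame, then differentiate in arclength.

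\emph{Existence of $\rho$.} Since $q$ is holomorphic and $q(t_0)\neq 0$, there is a disc $U$ around $t_0$ on which $q$ has no zeros. On a simply connected domain a non-vanishing holomorphic function has a holomorphic logarithm $\log q$. Setting $\rho=\exp(\tfrac12\log q)$ gives a holomorphic branch with $\rho^2=q$.

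\emph{Orthonormality.} Bilinearity gives $\langle T,T\rangle=\langle\gamma',\gamma'\rangle/\rho^2=q/q=1$. Since $J$ preserves $\langle\cdot,\cdot\rangle$, we get $\langle N,N\rangle=\langle J\gamma',J\gamma'\rangle/\rho^2=1$. For the mixed term, $\langle v,Jv\rangle=v_1(-v_2)+v_2v_1=0$ for every $v$, so $\langle T,N\rangle=0$. Both $T$ and $N$ are holomorphic because $\rho$ is holomorphic and non-vanishing on $U$. The arclength $s$ is defined by $ds/dt=\rho$, i.e.\ $s=\int_{t_0}^t\rho$. It is a local holomorphic coordinate because $\rho(t_0)\neq0$.

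\emph{Frenet formulas.} Differentiating the orthonormality relations is not enough on its own. From $\langle T,T\rangle=1$ we get $\langle T_s,T\rangle=0$. Since $\{T,N\}$ is a basis of $\C^2$ (its Gram matrix is the identity), it follows that $T_s=\kappa N$ with $\kappa:=\langle T_s,N\rangle$. The normal equation then follows without any further differentiation. $J$ is constant, so $N=JT$ gives $N_s=JT_s=\kappa JN=\kappa J^2T=-\kappa T$.

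\emph{Computing $\kappa$.} Write $T_s=\rho^{-1}(\gamma'/\rho)'=\rho^{-1}\bigl(\gamma''/\rho-\rho'\gamma'/\rho^2\bigr)$. Pairing with $N=J\gamma'/\rho$ kills the $\gamma'$ term, because $\langle\gamma',J\gamma'\rangle=0$. This leaves
\[
\kappa=\frac{\langle\gamma'',J\gamma'\rangle}{\rho^{3}}.
\]
Expanding, $\langle\gamma'',J\gamma'\rangle=z_1''(-z_2')+z_2''z_1'=\Delta$. Hence $\kappa=\Delta/\rho^3$, which is the formula $\Delta/q^{3/2}$ under the convention $q^{3/2}:=\rho^3$ for the chosen branch.

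\emph{Main obstacle.} The only genuinely delicate point is this branch convention. Switching $\rho\mapsto-\rho$ reverses $s$, flips both $T$ and $N$, and changes the sign of $\kappa$. The formulas remain consistent, but I would state explicitly that $q^{3/2}$ means $\rho^3$ for the chosen branch.
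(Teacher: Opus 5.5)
Your proposal is correct and follows essentially the same route as the paper: a holomorphic square root on a zero-free simply connected neighbourhood, orthonormality from bilinearity and the $J$-invariance of the form, $T_s=\kappa N$ with $\kappa=\langle T_s,N\rangle$ via the orthonormal basis, and $\kappa=\langle\gamma'',J\gamma'\rangle/\rho^3=\Delta/\rho^3$. The differences are cosmetic. You obtain $N_s=-\kappa T$ from $N=JT$ and $J^2=-I$, where the paper differentiates $\langle T,N\rangle=0$ and $\langle N,N\rangle=1$. You also state explicitly the branch convention $q^{3/2}:=\rho^3$, which the paper leaves implicit.
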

\begin{proof}
Since $q(t_0) \neq 0$, $q$ is non-vanishing on a sufficiently small simply connected neighbourhood $U$ of $t_0$, ensuring the existence of a holomorphic square root $\rho = \sqrt{q}$.

The vector field $T = \gamma'/\rho$ is holomorphic on $U$ and satisfies
\[
\langle T, T \rangle = \frac{\langle \gamma', \gamma' \rangle}{\rho^2} = \frac{q}{q} = 1.
\]
Because $J$ preserves the metric, we have $\langle J\gamma', J\gamma' \rangle = \langle \gamma', \gamma' \rangle = q$, which gives $\langle N, N \rangle = 1$. Moreover,
\[
\langle T, N \rangle = \frac{1}{q}\langle \gamma', J\gamma' \rangle = \frac{1}{q}(-z_1' z_2' + z_2' z_1') = 0.
\]
Differentiating $\langle T, T \rangle = 1$ with respect to $s$ yields $\langle T_s, T \rangle = 0$. Since $\{T, N\}$ forms an orthonormal frame, $T_s$ must be proportional to $N$, establishing $T_s = \kappa N$ for some holomorphic function $\kappa = \langle T_s, N \rangle$. Differentiating $\langle T, N \rangle = 0$ gives $\langle T_s, N \rangle + \langle T, N_s \rangle = 0$, whence $\langle T, N_s \rangle = -\kappa$. Together with $\langle N_s, N \rangle = 0$, this yields $N_s = -\kappa T$.

To compute $\kappa$, we differentiate $T = \gamma' q^{-1/2}$ with respect to $t$:
\[
T_t = \frac{\gamma''}{\sqrt{q}} - \frac{\gamma' q'}{2 q^{3/2}}.
\]
Since $T_s = q^{-1/2} T_t$, evaluating $\kappa = \langle T_s, N \rangle$ yields
\[
\kappa = \left\langle \frac{\gamma''}{q} - \frac{\gamma' q'}{2 q^2}, \frac{J\gamma'}{\sqrt{q}} \right\rangle = \frac{\langle \gamma'', J\gamma' \rangle}{q^{3/2}} = \frac{\detm(\gamma', \gamma'')}{q^{3/2}} = \frac{\Delta}{q^{3/2}},
\]
where we used $\langle \gamma', J\gamma' \rangle = 0$.
\end{proof}
%%%%%%%%%%%%%%%%%%%
\begin{definition}\label{def:inf+vert}
Let $\gamma : D \to \C^2$ be a regular holomorphic plane curve.
\begin{enumerate}
    \item A non-isotropic point $t_0 \in D$ is an \emph{inflection point of order $r$} ($r \ge 1$) if 
    \[
    \kappa(t_0) = \kappa'(t_0) = \cdots = \kappa^{(r-1)}(t_0) = 0 \quad \text{and} \quad \kappa^{(r)}(t_0) \neq 0.
    \]
    An \emph{ordinary inflection} corresponds to $r = 1$, where $\kappa(t_0) = 0$ and $\kappa'(t_0) \neq 0$.
    \item A non-inflection point $t_0 \in D$ is a \emph{vertex of order $r$} ($r \ge 1$) if
    \[
    \kappa'(t_0) = \kappa''(t_0) = \cdots = \kappa^{(r)}(t_0) = 0 \quad \text{and} \quad \kappa^{(r+1)}(t_0) \neq 0.
    \]
    An \emph{ordinary vertex} corresponds to $r = 1$, where $\kappa'(t_0) = 0$ and $\kappa''(t_0) \neq 0$ while $\kappa(t_0) \neq 0$.
\end{enumerate}
\end{definition}

\begin{proposition}\label{thm:inflectionvertex}
Away from isotropic points ($q \neq 0$), a point $t_0$ is an inflection point if and only if the complex Wronskian determinant vanishes:
\[
\kappa(t_0) = 0 \iff \Delta(t_0) = \det(\gamma'(t_0), \gamma''(t_0)) = 0.
\]
Furthermore, vertices are characterized by non-vanishing curvature with stationary points, $\kappa \neq 0$ and $\kappa' = 0$.
\end{proposition}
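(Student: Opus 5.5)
The first claim follows directly from the curvature formula \eqref{eq:kappa} of Theorem~\ref{thm:frenet}. Since $q(t_0)\neq 0$, I fix a holomorphic branch $\rho$ with $\rho^2=q$ on a small simply connected neighbourhood $U$ of $t_0$. Then $q^{3/2}=\rho^3$ is holomorphic and nowhere vanishing on $U$, and
\[
\kappa=\frac{\Delta}{\rho^{3}} .
\]
A quotient by a unit vanishes exactly where its numerator does, so $\kappa(t_0)=0\iff\Delta(t_0)=0$. Moreover, $\kappa$ and $\Delta$ differ by the unit factor $\rho^{-3}$, so $\ord_{t_0}(\kappa)=\ord_{t_0}(\Delta)$. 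This also shows that the order $r$ of an inflection in Definition~\ref{def:inf+vert} can be read off from $\Delta$. The choice of branch of $\rho$ only changes $\kappa$ by a sign, which does not affect its zeros.

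For the vertex statement, the content is essentially Definition~\ref{def:inf+vert}(2) itself. A vertex is by definition a non-inflection point, so $\kappa(t_0)\neq0$, and $\kappa'(t_0)=0$. The only thing to check is that this condition does not depend on whether the derivative is taken in $t$ or in the arc-length $s$. By the chain rule, $\kappa_s=\rho^{-1}\kappa'$ with $\rho(t_0)\neq0$, so $\kappa'(t_0)=0\iff\kappa_s(t_0)=0$.

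For higher-order vertices I would argue by induction that $\kappa',\dots,\kappa^{(r)}$ vanish at $t_0$ if and only if $\kappa_s,\dots,\partial_s^{r}\kappa$ do. Each $\partial_s^{j}\kappa$ equals $\rho^{-j}\kappa^{(j)}$ plus a combination of lower $t$-derivatives of $\kappa$ with holomorphic coefficients.

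There is no real obstacle here. The only point needing care is to state explicitly that $q^{3/2}$ means $\rho^3$ for the chosen branch, so that it is a genuine holomorphic unit near $t_0$. This is exactly where the hypothesis $q\neq0$ is used. At isotropic points the equivalence can fail, which motivates the separate treatment of isotropic inflections later.
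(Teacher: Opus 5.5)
Your proposal is correct and follows the same route as the paper. Near a non-isotropic point, $\kappa=\Delta/q^{3/2}$ has a holomorphic nowhere-vanishing denominator, which gives $\kappa(t_0)=0\iff\Delta(t_0)=0$, and the vertex statement is read off from Definition~\ref{def:inf+vert}. Your extra remarks are correct refinements that the paper omits: independence of the branch, $\ord_{t_0}(\kappa)=\ord_{t_0}(\Delta)$, and the triangular relation between the $t$- and $s$-derivatives of $\kappa$.

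As in the paper, only the forward direction of the vertex characterization is actually checked. The converse tacitly needs $\kappa$ to be non-constant near $t_0$. For example, the circle $(\cos t,\sin t)$ has $\kappa\equiv1$, so $\kappa'=0$ everywhere, yet it has no vertex of finite order.
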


\begin{proof}
By Theorem~\ref{thm:frenet}, $\kappa = \Delta / q^{3/2}$. Since $q \neq 0$, the denominator is non-zero and holomorphic on a neighborhood of $t_0$, giving $\kappa = 0 \iff \Delta = 0$. The vertex condition follows directly from Definition \ref{def:inf+vert}.
\end{proof}
The following well known result is analogous to the real plane curves.
\begin{theorem}[The complex evolute]\label{thm:evolute}
Let $\gamma$ be a holomorphic curve with non-vanishing complex curvature ($\kappa \neq 0$). The complex evolute $E$ of $\gamma$ is defined in arclength parameter $s$ as
\[
 E(s) = \gamma(s) + \frac{1}{\kappa(s)} N(s).
\]
In an arbitrary parameter $t$, $E(t)$ is given by
\[
E(t) = \gamma(t) + \frac{q(t)}{\Delta(t)} J \gamma'(t).
\]
Furthermore, the arclength derivative of the evolute satisfies
\[
 E_s = -\frac{\kappa_s}{\kappa^2} N.
\]
\end{theorem}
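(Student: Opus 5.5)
The proof is a direct computation in the holomorphic Frenet frame of Theorem~\ref{thm:frenet}, mirroring the real case. I will work on a small simply connected neighbourhood $U$ of a non-isotropic point $t_0$, where $\rho=\sqrt{q}$, $T$, $N$, $\kappa$ and the arclength $s$ are all holomorphic. The definition $E(s)=\gamma(s)+\kappa(s)^{-1}N(s)$ is then meaningful on the open set where $\kappa\neq 0$, and by Proposition~\ref{thm:inflectionvertex} this is the set where $\Delta\neq0$.

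The first step is to translate $E$ to an arbitrary parameter $t$. From Theorem~\ref{thm:frenet} we have $N=J\gamma'/\rho$ and $\kappa=\Delta/q^{3/2}=\Delta/\rho^3$. Hence
\[
\frac{1}{\kappa}N=\frac{\rho^3}{\Delta}\cdot\frac{J\gamma'}{\rho}=\frac{\rho^2}{\Delta}J\gamma'=\frac{q}{\Delta}J\gamma'.
\]
This gives $E(t)=\gamma(t)+\frac{q}{\Delta}J\gamma'$. The expression contains no square roots and is independent of the choice of the branch $\rho$: replacing $\rho$ by $-\rho$ flips $N$ and $\kappa$ simultaneously. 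So this formula is the natural global (square-root-free) definition of $E$ wherever $\Delta\neq0$.

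The second step is the derivative formula. Differentiating with respect to $s$ and using $\gamma_s=T$ together with the Frenet formula $N_s=-\kappa T$ gives
\[
E_s=T-\frac{\kappa_s}{\kappa^2}N+\frac1\kappa(-\kappa T)=-\frac{\kappa_s}{\kappa^2}N,
\]
because the tangential terms cancel. One can also note that $E$ is the envelope of the normal lines $\{x:\langle x-\gamma(s),T(s)\rangle=0\}$, since differentiating the family gives $\langle x-\gamma,\kappa N\rangle-1=0$. This identification is not needed for the stated formulas, however.

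There is no real obstacle here. The only care needed is the well-definedness check: the evolute is defined via a local branch of $\sqrt{q}$, while the $t$-formula must be branch-independent, and the cancellation $\rho^3/\rho=q$ shows that it is. The derivative identity holds on any $U$ where the frame exists, and hence away from isotropic points and inflections.
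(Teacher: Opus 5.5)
Your proposal is correct and follows essentially the same route as the paper. You substitute $N=J\gamma'/\sqrt{q}$ and $\kappa=\Delta/q^{3/2}$ to get $\frac{1}{\kappa}N=\frac{q}{\Delta}J\gamma'$, then differentiate $\gamma+\kappa^{-1}N$ in $s$ using $\gamma_s=T$ and $N_s=-\kappa T$ so the tangential terms cancel; your extra check that the $t$-formula does not depend on the branch of $\sqrt{q}$ (since $N$ and $\kappa$ both change sign) is a sound point the paper leaves implicit.
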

\begin{remark}
    As a consequence of Theorem \ref{thm:evolute}, $E_s(s_0) = 0$ if and only if $\kappa_s(s_0) = 0$, that is, the singularities of the complex evolute correspond precisely to the vertices of $\gamma$.
\end{remark}
% \begin{proof}
% By definition, the center of curvature is obtained by translating from $\gamma$ along the normal vector $N$ by the complex radius of curvature $1/\kappa$:
% \[
% E = \gamma + \frac{1}{\kappa} N.
% \]
% Substituting $N = \frac{1}{\sqrt{q}} J\gamma'$ and $\kappa = \frac{\Delta}{q^{3/2}}$ yields
% \[
% \frac{1}{\kappa} N = \frac{q^{3/2}}{\Delta} \frac{J\gamma'}{\sqrt{q}} = \frac{q}{\Delta} J\gamma',
% \]
% which proves $E(t) = \gamma(t) + \frac{q(t)}{\Delta(t)} J\gamma'(t)$.

% Differentiating $E(s)$ with respect to the complex arclength parameter $s$ and applying the Frenet formulas $T = \gamma_s$ and $N_s = -\kappa T$:
% \[
% E_s = \gamma_s + \left(\frac{1}{\kappa}\right)_s N + \frac{1}{\kappa} N_s = T - \frac{\kappa_s}{\kappa^2} N + \frac{1}{\kappa}(-\kappa T) = -\frac{\kappa_s}{\kappa^2} N.
% \]
% Since $N$ is an orthonormal frame vector ($\langle N, N \rangle = 1 \implies N \neq 0$), $E_s(s_0) = 0$ if and only if $\kappa_s(s_0) = 0$.
% \end{proof}

%%%%%%%%%%%%%%%%%%%%%%%%%%%%%%%%%%%
\section{The complex evolutoid}\label{sec:evolutoid}
We devote this section to define and study the evolutoid of a holomorphic plane curve and it's basic properties.
\begin{definition}\label{def:familylines}
We have the following two important definitions:
\begin{enumerate}
    \item Let $F : \mathbb{C}^n \times \mathbb{C}^k \rightarrow \mathbb{C}^d$ be a $k$-parameters family of a smooth map $f : \mathbb{C}^n \rightarrow \mathbb{C}^d$. The envelope, or the discriminant, of the family $F$ is the set
\[
\mathcal{D} = \mathcal{D}_F = \left\{ x \in \mathbb{C}^n : \text{there exists } t \in \mathbb{C}^k \text{ with } F(x,t) = \frac{\partial F}{\partial t}(x,t) = 0 \right\},
\]
where for $t = (u_1,\ldots,u_k) \in \mathbb{C}^k$, the equation $\partial F/\partial t(x,t) = 0$ means $\partial F/\partial u_i(x,t) = 0$ for all $1 \leq i \leq k$. 
\item Following the notations stated before, let $\lambda \in \C$ with $\lambda^2 \neq -1$, define the holomorphic direction vector field
\[
D^\lambda = T + \lambda N.
\]
The associated family of complex affine lines is given by
\begin{equation}\label{eq:familyline}
L^\lambda(s, u) = \gamma(s) + u D^\lambda(s), \qquad u \in \C. 
\end{equation}
The envelope of this family is called the \emph{$\lambda$-evolutoid} of $\gamma$ and is denoted by $E^\lambda$.
\end{enumerate}
\end{definition}

\begin{remark}\label{rem:Dpmi}
The restriction $\lambda^2 \neq -1$ avoids isotropic directions since
\[
\langle D^{\pm i}, D^{\pm i} \rangle = \langle T \pm i N, T \pm i N \rangle = \langle T, T \rangle - \langle N, N \rangle = 1 - 1 = 0.
\]
The isotropic cases $\lambda = \pm i$ will be analyzed separately.
\end{remark}

\begin{theorem}\label{thm:explicit}
Suppose $\kappa(s) \neq 0$ and $\lambda^2 \neq -1$. The envelope of the family of lines $L^\lambda(s, u)$ given in \eqref{eq:familyline} is parametrized by
\begin{equation}\label{eq:Elambda}
E^\lambda(s) = \gamma(s) + \frac{\lambda}{\kappa(s)(1 + \lambda^2)} \left(T(s) + \lambda N(s)\right).
\end{equation}
\end{theorem}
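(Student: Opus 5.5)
We compute the envelope directly from the definition of the discriminant, working in the arclength parameter $s$ furnished by Theorem~\ref{thm:frenet}, so that the Frenet formulas $T_s=\kappa N$, $N_s=-\kappa T$ are available. The family $L^\lambda(s,u)=\gamma(s)+uD^\lambda(s)$ describes lines rather than zero sets, so we first rewrite it as the zero set of a scalar holomorphic function. Since $\lambda^2\neq -1$, the vector $D^\lambda=T+\lambda N$ is non-isotropic, and $JD^\lambda = N-\lambda T$ is orthogonal to it and nonzero. Hence the line $L^\lambda(s,\cdot)$ is exactly
\[
\{x\in\C^2 : G(x,s):=\langle x-\gamma(s),\, N(s)-\lambda T(s)\rangle = 0\}.
\]
This identification uses $\langle D^\lambda,JD^\lambda\rangle=0$ together with the fact that $\{D^\lambda,JD^\lambda\}$ is a basis; the latter holds because the determinant of the change of basis from $\{T,N\}$ is $1+\lambda^2\neq0$.

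Next we compute $G_s$ using $\gamma_s=T$ and the Frenet formulas. The derivative of the normal vector is
\[
(N-\lambda T)_s=-\kappa T-\lambda\kappa N=-\kappa D^\lambda .
\]
The remaining term is $-\langle T,N-\lambda T\rangle=\lambda$. Therefore
\[
G_s=\lambda-\kappa\langle x-\gamma,D^\lambda\rangle .
\]

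The envelope conditions $G=G_s=0$ now become the following. First, $x-\gamma$ is parallel to $D^\lambda$, so we write $x-\gamma=uD^\lambda$. Second, $\kappa u\langle D^\lambda,D^\lambda\rangle=\lambda$. Since $\langle D^\lambda,D^\lambda\rangle=1+\lambda^2\neq0$ and $\kappa\neq0$, the second condition gives
\[
u=\frac{\lambda}{\kappa(1+\lambda^2)},
\]
which is exactly the expression in \eqref{eq:Elambda}. Equivalently, one can argue in the parametrized form: the envelope condition is $\det(L_s,L_u)=0$. Here $L_s=T+u\kappa(N-\lambda T)$ and $L_u=D^\lambda$, and expanding the determinant in the frame $\{T,N\}$ yields the same linear equation in $u$. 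Carrying out this expansion gives a useful cross-check on signs.

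The only delicate point is justifying that the scalar equation $G=0$ correctly describes the line. This requires the non-isotropy $\lambda^2\neq -1$; without it, $JD^\lambda$ would be proportional to $D^\lambda$, as happens in the degenerate case of Remark~\ref{rem:Dpmi}. The requirement $\kappa\neq0$ ensures that $u$ is finite, that the solution is unique, and that the envelope point exists for each $s$. Everything else is routine bookkeeping with the Frenet formulas.
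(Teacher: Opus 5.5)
Your proof is correct. It reaches the same linear equation $\lambda-\kappa u(1+\lambda^2)=0$ as the paper, by a somewhat different route.

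The paper works with the parametrized family $F(s,u)=\gamma+uD^\lambda$. It imposes the tangency criterion that $F_s$ and $F_u=D^\lambda$ be linearly dependent, and expands $\det(F_s,D^\lambda)$ using $\det(T,N)=1$. You instead realize each line as the zero set of $G(x,s)=\langle x-\gamma(s),JD^\lambda(s)\rangle$. You then apply the discriminant conditions $G=G_s=0$ of Definition~\ref{def:familylines} directly, using only the orthonormality relations and the Frenet formulas. Since $\langle x,Jv\rangle=\det(v,x)$, your $G$ is exactly $-\Phi^\lambda$, so your argument is the one the paper itself gives later in Proposition~\ref{prop:generatingdiscriminant}.

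Your version matches the paper's official (discriminant) definition of envelope, without tacitly invoking its equivalence with the parametric criterion. It also makes the existence and uniqueness of one envelope point per line transparent. The paper's version is shorter and needs no implicit equation.

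One correction to your final paragraph: identifying the line with $\{G=0\}$ does not require $\lambda^2\neq-1$. Since $G=\det(D^\lambda,x-\gamma)$, it cuts out the line $\gamma+\C D^\lambda$ for any non-zero $D^\lambda$. The hypothesis $\lambda^2\neq-1$ is genuinely needed only when you divide by the coefficient $\kappa\langle D^\lambda,D^\lambda\rangle=\kappa(1+\lambda^2)$ of $u$. That coefficient vanishes at $\lambda=\pm i$ while the constant term $\lambda$ does not, which is exactly why Remark~\ref{thm:exceptional} finds no finite solution.
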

\begin{proof}
Consider the 2-parameter parametrization $F(s, u) = \gamma(s) + u D^\lambda(s)$ with $D^\lambda = T + \lambda N$. Differentiating with respect to the complex arclength $s$ and applying the Frenet formulas, we obtain 
\[
F_s = \gamma_s + u (D^\lambda)' = T + u(T_s + \lambda N_s) = T + u \kappa (N - \lambda T).
\]
The envelope condition requires $F_s$ and $F_u = D^\lambda$ to be linearly dependent in $\C^2$, i.e.,
\(
\det(F_s, D^\lambda) = 0.
\)
Since $\det(T, N) = 1$, we evaluate the components
\[
\det(T, D^\lambda) = \det(T, T + \lambda N) = \lambda \det(T, N) = \lambda,
\]
and
\[
\begin{aligned}
\det(N - \lambda T, T + \lambda N) &= \det(N, T) + \lambda \det(N, N) - \lambda \det(T, T) - \lambda^2 \det(T, N) \\
&= -1 + 0 - 0 - \lambda^2 = -(1 + \lambda^2).
\end{aligned}
\]
Expanding the full determinant yields
\[
0 = \det(F_s, D^\lambda) = \lambda - u \kappa (1 + \lambda^2).
\]
Solving this equation for $u$ gives
\[
u = \frac{\lambda}{\kappa(1 + \lambda^2)}.
\]
Substituting $u$ into $F(s, u)$ yields the explicit parametrization of $E^\lambda(s)$, i.e.,
%\begin{equation}
$$
E^\lambda = \gamma + \frac{\lambda}{\kappa(1 + \lambda^2)}(T + \lambda N).
$$
%\end{equation}
\end{proof}

\begin{remark}\label{thm:limits}
The family of complex evolutoids $E^\lambda$ given in \eqref{eq:Elambda}, satisfies
\begin{enumerate}
    \item \(E^0 = \gamma.\)
    \item When $\lambda \to \infty$ we have
    \[
\lim_{\lambda \to \infty} E^\lambda = \gamma + \frac{1}{\kappa} N = E,
\]
where $E$ is the complex evolute of $\gamma$.
\end{enumerate}
\end{remark}
% \begin{proof}
% For $\lambda = 0$, $E_0(s) = \gamma(s) + 0 = \gamma(s)$.
% For $\lambda \to \infty$, rewrite $E^\lambda$ as:
% \[
% E^\lambda = \gamma + \frac{\lambda}{\kappa(1 + \lambda^2)} T + \frac{\lambda^2}{\kappa(1 + \lambda^2)} N.
% \]
% Taking limits of the rational functions in $\lambda$ leads:
% \[
% \lim_{\lambda \to \infty} \frac{\lambda}{1 + \lambda^2} = 0 \quad \text{and} \quad \lim_{\lambda \to \infty} \frac{\lambda^2}{1 + \lambda^2} = 1.
% \]
% Consequently,
% \[
% \lim_{\lambda \to \infty} E^\lambda = \gamma + \frac{1}{\kappa} N = E.
% \]
% \end{proof}
%%%%%%%%%%%%%%%%%%%%%%%%%%%%%%%%%%%%%%%%%%
\begin{proposition}\label{thm:fundamental}
Let $\lambda \in \C$ with $\lambda^2 \neq -1$, and suppose $\gamma$ has non-vanishing complex curvature ($\kappa \neq 0$). Then the arclength derivative of the $\lambda$-evolutoid $E^\lambda$ is given by
\begin{equation}\label{eq:fund}
E^\lambda_{s} = \frac{1 - \lambda \kappa_s / \kappa^2}{1 + \lambda^2} (T + \lambda N).
\end{equation}
\end{proposition}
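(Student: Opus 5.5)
The plan is to differentiate the explicit parametrization \eqref{eq:Elambda} from Theorem~\ref{thm:explicit} directly with respect to the complex arclength $s$, using the complex Frenet formulas of Theorem~\ref{thm:frenet}. Write
\[
E^\lambda = \gamma + \mu\,\frac{1}{\kappa}\,D^\lambda, \qquad \mu = \frac{\lambda}{1+\lambda^2}, \qquad D^\lambda = T + \lambda N.
\]
Here $\mu$ is a constant, since $\lambda$ is fixed with $\lambda^2\neq -1$. Since $\kappa\neq 0$ on the neighbourhood under consideration, $1/\kappa$ is holomorphic. Every term is therefore holomorphic in $s$, and the product rule applies.

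The first step is to record the three derivatives needed. We have $\gamma_s = T$ and $(1/\kappa)_s = -\kappa_s/\kappa^2$. By the Frenet formulas,
\[
D^\lambda_s = T_s + \lambda N_s = \kappa N - \lambda\kappa T = -\lambda\kappa\,(T - \lambda^{-1}N).
\]
It is cleaner to leave this as $\kappa(N-\lambda T)$. Substituting gives
\[
E^\lambda_s = T - \mu\frac{\kappa_s}{\kappa^2}(T+\lambda N) + \mu\,(N - \lambda T).
\]

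The second step, which is the only real point of the computation, is to recognize that $T + \mu(N-\lambda T)$ is itself a multiple of $D^\lambda$. Its $T$-coefficient is
\[
1 - \frac{\lambda^2}{1+\lambda^2} = \frac{1}{1+\lambda^2},
\]
and its $N$-coefficient is $\frac{\lambda}{1+\lambda^2}$. So this combination equals $\frac{1}{1+\lambda^2}(T+\lambda N)$. Combining it with the remaining term $-\frac{\lambda}{1+\lambda^2}\frac{\kappa_s}{\kappa^2}(T+\lambda N)$ yields exactly \eqref{eq:fund}.

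I expect no genuine obstacle; the only care needed is the coefficient bookkeeping in the second step. That collapse is precisely why the envelope tangent is parallel to the ruling direction $D^\lambda$, as the envelope construction requires. As a consistency check, the formula recovers $E^0_s = T$ at $\lambda = 0$. Dividing by $\lambda$ and letting $\lambda\to\infty$, it recovers $E_s = -\frac{\kappa_s}{\kappa^2}N$ from Theorem~\ref{thm:evolute}.
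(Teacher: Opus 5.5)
Your proof is correct and is essentially the paper's argument: differentiate $E^\lambda=\gamma+\frac{\lambda}{\kappa(1+\lambda^2)}(T+\lambda N)$ using the Frenet formulas, then observe that $T+\frac{\lambda}{1+\lambda^2}(N-\lambda T)=\frac{1}{1+\lambda^2}(T+\lambda N)$. One small point concerns your closing sanity check: no division by $\lambda$ is needed, since letting $\lambda\to\infty$ in \eqref{eq:fund} directly gives $-\frac{\kappa_s}{\kappa^2}N$, whereas literally dividing $E^\lambda_s$ by $\lambda$ would send it to $0$.
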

\begin{proof}
Define the scalar complex coefficient function
\[
a(s) = \frac{\lambda}{\kappa(s)(1 + \lambda^2)}.
\]
Then the evolutoid formula becomes $E^\lambda(s) = \gamma(s) + a(s)(T(s) + \lambda N(s))$. Differentiating with respect to the complex arclength parameter $s$ yields
\[
E^\lambda_{s} = \gamma_s + a_s(T + \lambda N) + a(T_s + \lambda N_s).
\]
Applying the Frenet formulas we get
\[
T_s + \lambda N_s = \kappa N - \lambda \kappa T = \kappa(N - \lambda T).
\]
Noting that $a\kappa = \frac{\lambda}{1 + \lambda^2}$, we substitute this relation into $E^\lambda_{s}$. Therefore
\begin{eqnarray*}
E^\lambda_{s} &=& T + a_s(T + \lambda N) + \frac{\lambda}{1 + \lambda^2}(N - \lambda T)\\
&=& \left(1 - \frac{\lambda^2}{1 + \lambda^2} + a_s\right) T + \left(\frac{\lambda}{1 + \lambda^2} + \lambda a_s\right) N = \left(\frac{1}{1 + \lambda^2} + a_s\right) (T + \lambda N).
\end{eqnarray*}
Differentiating $a(s)$ gives
\[
a_s = -\frac{\lambda \kappa_s}{\kappa^2 (1 + \lambda^2)}.
\]
Substituting $a_s$ into the coefficient factor yields
\[
\frac{1}{1 + \lambda^2} + a_s = \frac{1}{1 + \lambda^2} - \frac{\lambda \kappa_s}{\kappa^2 (1 + \lambda^2)} = \frac{1 - \lambda \kappa_s / \kappa^2}{1 + \lambda^2}.
\]
Therefore,
$$
E^\lambda_{s} = \frac{1 - \lambda \kappa_s / \kappa^2}{1 + \lambda^2} (T + \lambda N).
$$
%\end{equation}
\end{proof}

\begin{remark}[Angle-parametrized form]
For a direction
$
V=T+\lambda N,$ $\lambda\in\mathbb C,$
we may define a angle $\theta$ by
\[
\cos^2\theta
=\frac{\langle V,T\rangle^2}
{\langle V,V\rangle\langle T,T\rangle}
=\frac{1}{1+\lambda^2}.
\]
Since
$
\sin^2\theta=1-\cos^2\theta
=\frac{\lambda^2}{1+\lambda^2},
$
we have
$
\tan^2\theta=\lambda^2.
$
Thus, choosing a branch, we may write
$
\lambda=\tan\theta,
$
where $\theta\in\mathbb C$. Hence the direction can be parametrized as
$
v=T+\tan\theta N.
$

Setting $\lambda = \tan\theta$, for $\theta \in \C \setminus \{ \frac{\pi}{2} + k\pi \}$, we have $1 + \lambda^2 = \sec^2\theta$ and
\[
T + \lambda N = T + \tan\theta N = \frac{1}{\cos\theta} (\cos\theta T + \sin\theta N).
\]
Substituting these trigonometric relations into Proposition~\ref{thm:fundamental} we get
\[
E^\theta_{s} = \frac{1 - \tan\theta \frac{\kappa_s}{\kappa^2}}{\sec^2\theta} \cdot \frac{1}{\cos\theta} (\cos\theta T + \sin\theta N) = \left(\cos\theta - \frac{\sin\theta \, \kappa_s}{\kappa^2}\right) (\cos\theta T + \sin\theta N).
\]
This provides the angle-parametrized form of the fundamental derivative identity.
\end{remark}
%%%%%%%%%%%%%%%%%%%%%%%%%%%%%5
Using the Proposition \ref{thm:fundamental} we have the following results regarding the singularities of the curve $\gamma$.

\begin{theorem}[Singularity condition]\label{thm:singularity}
Suppose $q(s_0) \neq 0$, $\kappa(s_0) \neq 0$, and $\lambda^2 \neq -1$. Then $s_0$ is a singular point of $E^\lambda$ if and only if
\[
\frac{\kappa_s(s_0)}{\kappa(s_0)^2} = \frac{1}{\lambda}  \quad \iff \quad  \lambda \kappa_s(s_0) = \kappa(s_0)^2. 
\]
\end{theorem}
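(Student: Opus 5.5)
The plan is to read the singularity condition straight off the fundamental derivative identity of Proposition~\ref{thm:fundamental}. Since $q(s_0)\neq 0$, Theorem~\ref{thm:frenet} provides the holomorphic frame $\{T,N\}$ and the arclength parameter $s$ on a neighbourhood of $s_0$. Because $\kappa(s_0)\neq 0$, the parametrization \eqref{eq:Elambda} is holomorphic there. The parameter change $t\mapsto s$ has derivative $ds/dt=\rho\neq 0$, so $E^\lambda$ is singular at $s_0$ as a map of $s$ exactly when it is singular as a map of $t$. It therefore suffices to decide when $E^\lambda_s(s_0)=\mathbf 0$.

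By \eqref{eq:fund}, $E^\lambda_s$ is a scalar multiple of the direction vector $D^\lambda=T+\lambda N$. The scalar factor is $\bigl(1-\lambda\kappa_s/\kappa^2\bigr)/(1+\lambda^2)$, and its denominator is non-zero because $\lambda^2\neq -1$. The key step is to check that $D^\lambda(s_0)\neq\mathbf 0$. This follows from $\det(T,N)=1$: $T$ and $N$ are linearly independent, so $T+\lambda N\neq \mathbf 0$ for every $\lambda$. Alternatively, $\langle D^\lambda,D^\lambda\rangle=1+\lambda^2\neq 0$, and a vector with non-zero square cannot be zero. Consequently $E^\lambda_s(s_0)=\mathbf 0$ if and only if $1-\lambda\kappa_s(s_0)/\kappa(s_0)^2=0$.

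It remains to rewrite this condition in the two stated forms. Multiplying by $\kappa(s_0)^2\neq 0$ gives $\lambda\kappa_s(s_0)=\kappa(s_0)^2$. If this holds, then $\lambda\neq 0$, since otherwise $\kappa(s_0)^2=0$. Dividing by $\lambda\kappa(s_0)^2$ then gives $\kappa_s(s_0)/\kappa(s_0)^2=1/\lambda$. Conversely, the equation $\kappa_s/\kappa^2 = 1/\lambda$ presupposes $\lambda\neq 0$, and clearing denominators returns $\lambda\kappa_s=\kappa^2$.

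The case $\lambda=0$ needs a separate check for consistency. There $E^0=\gamma$ is regular, and the condition $0=\kappa(s_0)^2$ indeed fails, so the statement holds. The only real subtlety in the whole argument is the nonvanishing of $D^\lambda$; I would handle it with the determinant argument above, and everything else is algebra.
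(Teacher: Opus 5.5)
Your proposal is correct and follows the same route as the paper: both read the condition directly off the identity $E^\lambda_s = c^\lambda\,(T+\lambda N)$ of Proposition~\ref{thm:fundamental}, using $1+\lambda^2\neq 0$ and $T+\lambda N\neq\mathbf{0}$. You also make explicit several points the paper leaves tacit: why $T+\lambda N\neq\mathbf{0}$ (via $\det(T,N)=1$), that singular points are unchanged under the reparametrization $t\mapsto s$, and that the statement remains consistent in the case $\lambda=0$.
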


\begin{proof}
By the fundamental derivative identity \eqref{eq:fund}, we have $E^\lambda_{s} = c^\lambda(s)(T + \lambda N)$ where
\[
c^\lambda(s) = \frac{1 - \lambda \kappa_s / \kappa^2}{1 + \lambda^2}.
\]
Since $\lambda^2 \neq -1$ and $T + \lambda N \neq 0$, so $E^\lambda_{s}(s_0) = 0$ holds if and only if $1 - \lambda \frac{\kappa_s(s_0)}{\kappa(s_0)^2} = 0$, yielding $\frac{\kappa_s(s_0)}{\kappa(s_0)^2} = \frac{1}{\lambda}$.
\end{proof}

\begin{corollary}\label{cor:vertex}
In the limit $\lambda \to \infty$, we have
\[
E_s(s_0) = 0 \iff \kappa_s(s_0) = 0 \quad (\text{with } \kappa(s_0) \neq 0).
\]
\end{corollary}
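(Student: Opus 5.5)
The corollary follows by letting $\lambda\to\infty$ in Proposition~\ref{thm:fundamental} and comparing with the evolute derivative of Theorem~\ref{thm:evolute}. I would not work at the level of the singularity condition of Theorem~\ref{thm:singularity}, since the ratio $1/\lambda$ there is awkward to handle in the limit. Instead, I would pass to the limit in the derivative identity itself.

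First, rewrite \eqref{eq:fund} by splitting off the $\lambda$-dependence:
\[
E^\lambda_s=\frac{1}{1+\lambda^2}\,T+\frac{\lambda}{1+\lambda^2}\Big(N-\frac{\kappa_s}{\kappa^2}T\Big)-\frac{\lambda^2}{1+\lambda^2}\,\frac{\kappa_s}{\kappa^2}\,N .
\]
Since $\kappa(s_0)\neq0$ and $q(s_0)\neq 0$, the functions $T$, $N$ and $\kappa_s/\kappa^2$ are holomorphic near $s_0$ and do not depend on $\lambda$. As $\lambda\to\infty$ we have $\frac{1}{1+\lambda^2}\to0$, $\frac{\lambda}{1+\lambda^2}\to0$ and $\frac{\lambda^2}{1+\lambda^2}\to1$. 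These limits are uniform in $s$ on a compact neighbourhood of $s_0$, so
\[
\lim_{\lambda\to\infty}E^\lambda_s=-\frac{\kappa_s}{\kappa^2}N .
\]

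Second, identify this limit with $E_s$. By Remark~\ref{thm:limits}, $E^\lambda\to E$ locally uniformly, because the coefficients $\frac{\lambda}{(1+\lambda^2)\kappa}$ and $\frac{\lambda^2}{(1+\lambda^2)\kappa}$ converge uniformly where $\kappa\neq0$. Weierstrass' theorem on derivatives of locally uniformly convergent holomorphic maps then lets us interchange the limit and $d/ds$. This recovers $E_s=-\frac{\kappa_s}{\kappa^2}N$, in agreement with Theorem~\ref{thm:evolute}.

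Finally, since $\langle N,N\rangle=1$ we have $N(s_0)\neq0$, and $\kappa(s_0)\neq0$ makes $\kappa^{-2}$ finite and nonzero. Hence $E_s(s_0)=0$ if and only if $\kappa_s(s_0)=0$.

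As a consistency check, rewrite the singularity condition of Theorem~\ref{thm:singularity} as $\kappa_s(s_0)=\kappa(s_0)^2/\lambda$. As $\lambda\to\infty$ this formally tends to $\kappa_s(s_0)=0$. I would not rely on this as the proof, since limits of zero sets need not behave well; the derivative argument above avoids that issue.

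The only delicate point is justifying the interchange of the limit $\lambda\to\infty$ with differentiation in $s$. I expect this to be handled immediately by the uniform convergence on compacta where $\kappa\neq 0$.
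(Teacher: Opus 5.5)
Your proof is correct, but it takes a different route from the paper. The paper gives no separate argument. The corollary is read off from Theorem~\ref{thm:singularity} by formally letting $1/\lambda\to0$ in $\kappa_s(s_0)/\kappa(s_0)^2=1/\lambda$. Its actual content is the identity $E_s=-\frac{\kappa_s}{\kappa^2}N$ of Theorem~\ref{thm:evolute} (see the remark following it, and the direct computation of $E^\infty_s$ in Section~\ref{sec:local}), together with $N\neq0$ and $\kappa(s_0)\neq0$.

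You deliberately avoid taking the limit of zero sets. Instead you pass to the limit in the vector identity \eqref{eq:fund} and use Weierstrass' theorem to identify $\lim_{\lambda\to\infty}E^\lambda_s$ with $E_s$. This gives a precise meaning to ``in the limit'': the derivative fields $E^\lambda_s$ converge locally uniformly to $E_s$, so the $\lambda$-vertex condition degenerates continuously to the vertex condition.

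The cost is redundancy. Your final step uses exactly $E_s=-\frac{\kappa_s}{\kappa^2}N$, which Theorem~\ref{thm:evolute} already provides by differentiating $\gamma+\kappa^{-1}N$ with the Frenet formulas. So your limiting argument acts as a consistency check rather than a necessary ingredient. The Weierstrass step can also be bypassed: $E^\lambda=\gamma+\frac{\lambda}{1+\lambda^2}\kappa^{-1}T+\frac{\lambda^2}{1+\lambda^2}\kappa^{-1}N$ depends on $\lambda$ only through constant scalar coefficients, so you can differentiate in $s$ first and then let the coefficients converge.
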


\begin{remark}
Note that $s_0$ is a singular point of the evolute $E$ if and only if $\kappa_s(s_0)=0$, provided that $\kappa(s_0)\neq 0$; that is, $s_0$ is a vertex of the curve $\gamma$. Thus, the condition
$
\kappa_s=\frac{\kappa^2}{\lambda}
$
can be regarded as a generalization of the vertex condition.
\end{remark}

% \begin{definition} 
% For finite $\lambda \neq 0$, points satisfying $\kappa_s = \kappa^2 / \lambda$ generalize the vertex condition and are termed \emph{$\lambda$-vertices}.
% \end{definition}

\begin{definition}
For finite $\lambda \neq 0$, points satisfying $\kappa_s = \kappa^2 / \lambda$ are termed \emph{$\lambda$-vertices}.
\end{definition}

It is worth noting that, if $q = \langle \gamma', \gamma' \rangle$ and $\Delta = \detm(\gamma', \gamma'')$. Then, wherever $\Delta \neq 0$ and $\lambda^2 \neq -1$, $E^\lambda$ admits the square-root-free expression
\begin{equation}\label{eq:squarefree}
E^\lambda = \gamma + \frac{\lambda q}{\Delta(1 + \lambda^2)} \left(\gamma' + \lambda J\gamma'\right).
\end{equation}
Indeed, by substituting $T = \gamma' / \sqrt{q}$ and $N = J\gamma' / \sqrt{q}$ yields $T + \lambda N = \frac{1}{\sqrt{q}}(\gamma' + \lambda J\gamma')$. Combining this with $\frac{1}{\kappa} = \frac{q^{3/2}}{\Delta}$ gives
\[
\frac{1}{\kappa}(T + \lambda N) = \frac{q}{\Delta}(\gamma' + \lambda J\gamma').
\]
Substituting this into $E^\lambda$ as in \eqref{eq:Elambda}, confirms the formula \eqref{eq:squarefree}.

\begin{theorem}\label{thm:isotropicextension}
Suppose $q(t_0) = 0$ and $\Delta(t_0) \neq 0$. For any finite $\lambda \in \C$ with $\lambda^2 \neq -1$, the formula \eqref{eq:squarefree} defines a holomorphic extension of $E^\lambda$ across $t_0$, satisfying
\(
E^\lambda(t_0) = \gamma(t_0). 
\)
\end{theorem}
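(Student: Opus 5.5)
The plan is to work directly with the square-root-free formula \eqref{eq:squarefree} rather than the Frenet expression \eqref{eq:Elambda}. That formula only involves $\gamma$, $\gamma'$, $J\gamma'$, $q$ and $\Delta$, all of which are holomorphic on $D$ with no square roots. We then check three things: the formula is holomorphic near $t_0$, it agrees with $E^\lambda$ off $t_0$, and its value at $t_0$ is $\gamma(t_0)$.

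\emph{Holomorphy.} Since $\Delta(t_0)\neq 0$ and $\Delta$ is holomorphic, there is a disc $U$ around $t_0$ on which $\Delta$ never vanishes. Hence $q/\Delta$ is holomorphic on $U$. Because $\lambda^2\neq -1$, the constant $\lambda/(1+\lambda^2)$ is finite. The vector field $\gamma'+\lambda J\gamma'$ is holomorphic. So the right-hand side of \eqref{eq:squarefree} is a holomorphic map $U\to\C^2$; call it $\widetilde E^\lambda$.

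\emph{Agreement with $E^\lambda$.} The zeros of $q$ are isolated: $q\not\equiv 0$ is part of the standing hypothesis that $\gamma$ has non-isotropic points, and it fails only if $\gamma$ is an isotropic line. So, after shrinking $U$, we have $q\neq 0$ on $U\setminus\{t_0\}$. On any simply connected subdomain of $U\setminus\{t_0\}$ we may choose a branch $\rho=\sqrt q$. There Theorem~\ref{thm:frenet} gives $\kappa=\Delta/q^{3/2}\neq 0$, and Theorem~\ref{thm:explicit} applies. The computation following \eqref{eq:squarefree} then shows that $\widetilde E^\lambda$ coincides with \eqref{eq:Elambda}.

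The one point needing care, which I expect to be the only real obstacle, is branch independence. Replacing $\rho$ by $-\rho$ sends $T\mapsto -T$, $N\mapsto -N$ and $\kappa\mapsto-\kappa$. The product $\kappa^{-1}(T+\lambda N)$ is therefore unchanged, so $E^\lambda$ is single-valued on the punctured disc and equals $\widetilde E^\lambda$ there. Consequently $\widetilde E^\lambda$ is the unique holomorphic extension of $E^\lambda$ across $t_0$; uniqueness also follows from the identity theorem, or from Riemann's removable singularity theorem, since $\widetilde E^\lambda$ is bounded near $t_0$.

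\emph{Value at $t_0$.} Evaluating at $t_0$ and using $q(t_0)=0$ together with $\Delta(t_0)\neq0$, the coefficient $\lambda q(t_0)/(\Delta(t_0)(1+\lambda^2))$ vanishes. The vector $\gamma'(t_0)+\lambda J\gamma'(t_0)$ is finite, so $\widetilde E^\lambda(t_0)=\gamma(t_0)$, as required.

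A brief remark could record that $\gamma'(t_0)\neq0$ is isotropic, hence an eigenvector of $J$. This is the input the later tangency statement uses, but it is not needed here.
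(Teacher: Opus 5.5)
Your proof is correct and follows the paper's route. In both, \eqref{eq:squarefree} is holomorphic near $t_0$ because $\Delta(t_0)\neq 0$ and $\lambda^2\neq-1$, and $q(t_0)=0$ then gives $E^\lambda(t_0)=\gamma(t_0)$. Your extra checks (agreement with \eqref{eq:Elambda} on a punctured disc, and branch independence) make explicit what the paper leaves implicit. One small tightening: you do not need a standing hypothesis to know that $t_0$ is an isolated zero of $q$. Lemma~\ref{lem:qprime} gives $q'(t_0)=2\varepsilon i\,\Delta(t_0)\neq 0$, so $t_0$ is a simple zero of $q$.
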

\begin{proof}
The expression \eqref{eq:squarefree} is free of radical branches. Since $\Delta(t_0) \neq 0$ and $\lambda^2 \neq -1$, all terms are holomorphic in a neighborhood of $t_0$. At $t = t_0$, $q(t_0) = 0$, leaving $E^\lambda(t_0) = \gamma(t_0)$.
\end{proof}

\begin{lemma}\label{lem:isotropic}
Let $v \in \C^2 \setminus \{0\}$ satisfy $\langle v, v \rangle = 0$. Then $v = a~(1, \varepsilon i)$ for a unique $\varepsilon \in \{\pm1\}$ and non-zero $a \in \C$, and
\[
J v = -\varepsilon i ~ v
\]
\end{lemma}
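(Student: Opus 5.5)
We prove Lemma~\ref{lem:isotropic} by direct computation on the components of $v$, and then check the action of $J$ explicitly.

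\textbf{Normal form.} Write $v=(v_1,v_2)\neq 0$. The isotropy condition $v_1^2+v_2^2=0$ factors over $\C$ as
\[
(v_2-iv_1)(v_2+iv_1)=0 .
\]
Hence $v_2=\varepsilon i v_1$ for some $\varepsilon\in\{\pm1\}$.

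If $v_1=0$, this relation forces $v_2=0$, which contradicts $v\neq 0$. So $v_1\neq0$. Setting $a=v_1$ gives $v=a(1,\varepsilon i)$ with $a\neq0$.

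\textbf{Uniqueness of $\varepsilon$.} Suppose both signs worked. Then $v_2=iv_1$ and $v_2=-iv_1$ together give $2iv_1=0$, so $v_1=0$, which we have already excluded. Thus $\varepsilon$ is unique. Once $\varepsilon$ is fixed, $a=v_1$ is determined by the first component, so $a$ is unique as well.

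\textbf{Action of $J$.} Using the matrix of $J$,
\[
J(1,\varepsilon i)=(-\varepsilon i,\,1).
\]
On the other hand,
\[
-\varepsilon i\,(1,\varepsilon i)=(-\varepsilon i,\,-\varepsilon^2 i^2)=(-\varepsilon i,\,1),
\]
since $\varepsilon^2=1$ and $i^2=-1$. The two vectors agree, so $J(1,\varepsilon i)=-\varepsilon i\,(1,\varepsilon i)$. By linearity of $J$, multiplying by $a$ gives $Jv=-\varepsilon i\,v$.

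Geometrically, this says the two isotropic lines are exactly the eigenlines of $J$, with eigenvalues $\mp i$. The only point needing care is the sign bookkeeping in this last step; the rest is routine.
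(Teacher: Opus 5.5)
Your proof is correct and follows essentially the same route as the paper: solve $v_1^2+v_2^2=0$ as $v_2=\varepsilon i v_1$, then apply the matrix $J$ directly to $a(1,\varepsilon i)$. You additionally justify $a=v_1\neq 0$ and the uniqueness of $\varepsilon$, both of which the statement claims but the paper's proof leaves implicit.
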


\begin{proof}
Writing $v = (a, b)$, we get $\langle v, v \rangle = a^2 + b^2 = 0$. This implies  $b = \varepsilon i ~a$ for $\varepsilon \in \{\pm 1\}$. Thus $v = a~(1, \varepsilon i)$. Applying $J$ we get
\[
J v = \begin{pmatrix} 0 & -1 \\ 1 & 0 \end{pmatrix} \begin{pmatrix} a \\ \varepsilon i a \end{pmatrix} = \begin{pmatrix} -\varepsilon i a \\ a \end{pmatrix} = -\varepsilon i \, a \begin{pmatrix} 1 \\ \varepsilon i \end{pmatrix} = -\varepsilon i \, v.
\]
\end{proof}

\begin{lemma}\label{lem:qprime}
Suppose $q(t_0) = 0$ and $\gamma'(t_0) \neq 0$ with $\gamma'(t_0) = a(1, \varepsilon i)$ for $\varepsilon \in \{\pm 1\}$. Then
\[
 q'(t_0) = 2\varepsilon i \, \Delta(t_0).
\]
\end{lemma}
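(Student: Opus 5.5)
We compute $q'$ and $\Delta$ directly at $t_0$ and compare them using Lemma~\ref{lem:isotropic}. Differentiating \eqref{eq:q} gives
\[
q'(t) = 2\langle \gamma'(t), \gamma''(t)\rangle = 2\big(z_1'z_1'' + z_2'z_2''\big)(t).
\]
The key observation is that the Wronskian can also be written in terms of the bilinear form. Since $Jv = (-v_2, v_1)$, we have $\langle J\gamma', \gamma''\rangle = -z_2'z_1'' + z_1'z_2''$. Hence
\[
\Delta = \detm(\gamma', \gamma'') = \langle J\gamma', \gamma''\rangle,
\]
which is the same identity already used at the end of the proof of Theorem~\ref{thm:frenet}, up to the antisymmetry of $J$.

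Next we evaluate at $t_0$. There $\gamma'(t_0) = a(1,\varepsilon i)$ is a non-zero isotropic vector, so Lemma~\ref{lem:isotropic} gives $J\gamma'(t_0) = -\varepsilon i\,\gamma'(t_0)$. Therefore
\[
\Delta(t_0) = \langle J\gamma'(t_0), \gamma''(t_0)\rangle = -\varepsilon i\,\langle \gamma'(t_0), \gamma''(t_0)\rangle = -\varepsilon i\,\tfrac12 q'(t_0).
\]
Solving for $q'(t_0)$ and using $1/(-\varepsilon i) = \varepsilon i$ (valid because $\varepsilon^2=1$) yields $q'(t_0) = 2\varepsilon i\,\Delta(t_0)$.

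As a check, one can verify this in coordinates. Write $\gamma''(t_0) = (b_1, b_2)$. Then $q'(t_0) = 2a(b_1 + \varepsilon i b_2)$ and $\Delta(t_0) = a(b_2 - \varepsilon i b_1)$. Multiplying the latter by $2\varepsilon i$ gives $2a(\varepsilon i b_2 + b_1)$, which matches. The only point requiring care is the sign convention in $\detm(\gamma',\gamma'') = \langle J\gamma', \gamma''\rangle$ versus $\langle \gamma'', J\gamma'\rangle$; these agree because the bilinear form is symmetric. That sign bookkeeping is the main (mild) obstacle. The hypothesis $\gamma'(t_0)\ne 0$ is needed only to invoke Lemma~\ref{lem:isotropic}.
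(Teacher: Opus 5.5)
Your proof is correct, but its main line differs from the paper's. The paper works purely in components: it writes $\gamma''(t_0)=(b,c)$, expands $q'(t_0)=2a(b+\varepsilon i c)$ and $\Delta(t_0)=a(c-\varepsilon i b)$, and compares them. That is exactly your closing ``check''. Your primary argument is instead invariant: you write $\Delta=\langle J\gamma',\gamma''\rangle$ and use the eigen-relation $J\gamma'(t_0)=-\varepsilon i\,\gamma'(t_0)$.

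The coordinate computation is shorter and needs nothing beyond the definitions. Your route shows where the sign comes from: $q'(t_0)=-2\mu\,\Delta(t_0)$ whenever $J\gamma'(t_0)=\mu\,\gamma'(t_0)$ with $\mu=\pm i$, so the factor $2\varepsilon i$ is forced by the eigenvalue $-\varepsilon i$ rather than by a coordinate choice. This is exactly the bookkeeping that matters when the lemma is applied. The proof of Theorem~\ref{thm:isotropictangent} pairs this lemma with $J\gamma'(t_0)=\varepsilon i\,\gamma'(t_0)$, which is the opposite sign to Lemma~\ref{lem:isotropic}. If you keep $\varepsilon$ as in this lemma throughout, the same computation gives $(E^\lambda)'(t_0)=\frac{1+3\varepsilon i\lambda}{1+\varepsilon i\lambda}\,\gamma'(t_0)$, which vanishes at $\lambda=\varepsilon i/3$.

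Two small corrections to your write-up:
\begin{itemize}
\item The equality $\langle J\gamma',\gamma''\rangle=\langle\gamma'',J\gamma'\rangle$ comes from symmetry of the form, not antisymmetry of $J$, as you yourself say at the end.
\item The hypothesis already gives $\gamma'(t_0)=a(1,\varepsilon i)$, so $J\gamma'(t_0)=-\varepsilon i\,\gamma'(t_0)$ can be checked directly for any $a$. Neither Lemma~\ref{lem:isotropic} nor $\gamma'(t_0)\neq 0$ is actually needed; for $a=0$ both sides are zero.
\end{itemize}
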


\begin{proof}
Differentiating $q = \langle \gamma', \gamma' \rangle$ gives $q'(t_0) = 2\langle \gamma'(t_0), \gamma''(t_0) \rangle$. Setting $\gamma'(t_0) = a(1, \varepsilon i)$ and $\gamma''(t_0) = (b, c)$ we conclude that
\[
q'(t_0) = 2a(b + \varepsilon i c).
\]
Since $\Delta(t_0) = z_1' z_2'' - z_2' z_1'' = a(c - \varepsilon i b)$ so,  \[
2\varepsilon i \, \Delta(t_0) = 2\varepsilon i a (c - \varepsilon i b) = 2a (\varepsilon i c + b) = q'(t_0).
\]
\end{proof}
%%%%%#####################################
\begin{theorem}\label{thm:isotropictangent}
Suppose $q(t_0) = 0$ and $\Delta(t_0) \neq 0$. Then the extended $\lambda$-evolutoid satisfies
\(
E^\lambda(t_0) = \gamma(t_0).
\)
Moreover, its derivative satisfies
\[
(E^\lambda)'(t_0) = \frac{1+\varepsilon i\lambda}{1-\varepsilon i\lambda} \, \gamma'(t_0),
\]
where $J\gamma'(t_0) = \varepsilon i \, \gamma'(t_0)$ with $\varepsilon \in \{+1, -1\}$. Consequently, whenever $\lambda^2 \neq -1$, the evolutoid $E^\lambda$ is regular at $t_0$ and shares the same tangent line as $\gamma$.
\end{theorem}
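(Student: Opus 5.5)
The plan is to work entirely with the square-root-free formula \eqref{eq:squarefree}, which by Theorem~\ref{thm:isotropicextension} is holomorphic near $t_0$. Write
\[
E^\lambda(t) = \gamma(t) + \phi(t)\,\bigl(\gamma'(t) + \lambda J\gamma'(t)\bigr), \qquad \phi(t) = \frac{\lambda\, q(t)}{\Delta(t)(1+\lambda^2)}.
\]
Since $\Delta(t_0)\neq 0$, $\phi$ is holomorphic at $t_0$, and $q(t_0)=0$ gives $\phi(t_0)=0$. This immediately yields $E^\lambda(t_0)=\gamma(t_0)$.

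For the derivative, differentiate by the product rule:
\[
(E^\lambda)' = \gamma' + \phi'\,(\gamma'+\lambda J\gamma') + \phi\,(\gamma''+\lambda J\gamma'').
\]
At $t_0$ the last term drops out because $\phi(t_0)=0$. The quotient rule, together with $q(t_0)=0$, gives $\phi'(t_0) = \lambda q'(t_0)/\bigl(\Delta(t_0)(1+\lambda^2)\bigr)$.

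Two earlier lemmas now enter.
\begin{itemize}
\item Lemma~\ref{lem:isotropic} shows that $\gamma'(t_0)$ is an eigenvector of $J$. Using the normalisation of the statement, $J\gamma'(t_0)=\varepsilon i\,\gamma'(t_0)$, we get $\gamma'+\lambda J\gamma' = (1+\varepsilon i\lambda)\gamma'$ at $t_0$.
\item Lemma~\ref{lem:qprime} expresses $q'(t_0)$ as $\pm 2 i\,\Delta(t_0)$. Then $\Delta(t_0)$ cancels, leaving $\phi'(t_0)$ as an explicit rational function of $\lambda$ alone.
\end{itemize}
Hence $(E^\lambda)'(t_0)=C(\lambda)\,\gamma'(t_0)$ with $C(\lambda)= 1 + \phi'(t_0)(1+\varepsilon i\lambda)$. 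Using the factorisation $1+\lambda^2=(1+\varepsilon i\lambda)(1-\varepsilon i\lambda)$, one factor of the denominator cancels and $C(\lambda)$ becomes a single fraction with denominator $1\mp\varepsilon i\lambda$. This is to be matched with the stated $\frac{1+\varepsilon i\lambda}{1-\varepsilon i\lambda}$.

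\textbf{Main obstacle: sign bookkeeping.} Lemma~\ref{lem:isotropic} is phrased with $\gamma'(t_0)=a(1,\varepsilon i)$ and $J\gamma'(t_0)=-\varepsilon i\,\gamma'(t_0)$, while the theorem uses $J\gamma'(t_0)=\varepsilon i\,\gamma'(t_0)$. So $\varepsilon$ must be replaced by $-\varepsilon$ consistently before applying Lemma~\ref{lem:qprime}, which then reads $q'(t_0)=-2\varepsilon i\,\Delta(t_0)$.

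I would first carry out this substitution carefully and simplify $C(\lambda)$. I would then check the result directly on the test curve $\gamma(t)=(t,\,it+t^2/2)$, for which $q(0)=0$ and $\Delta(0)=1$. If the simplified constant does not literally coincide with $\frac{1+\varepsilon i\lambda}{1-\varepsilon i\lambda}$, the discrepancy is located in this convention step, and the coefficient in the statement has to be adjusted accordingly.

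The geometric conclusion is unaffected by the exact form of the constant. In every case $(E^\lambda)'(t_0)$ is a scalar multiple of $\gamma'(t_0)\neq 0$. Whenever $\lambda^2\neq-1$ and the numerator of $C(\lambda)$ does not vanish, $E^\lambda$ is therefore regular at $t_0$ and its tangent line there is $\gamma(t_0)+\C\gamma'(t_0)$, the tangent line of $\gamma$.
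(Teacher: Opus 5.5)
Your route (square-root-free formula, product rule with $\phi(t_0)=0$, Lemmas~\ref{lem:isotropic} and~\ref{lem:qprime}, factorisation of $1+\lambda^2$) is exactly the paper's. Your sign worry is also exactly where the paper goes wrong. Its proof inserts $q'(t_0)=2\varepsilon i\,\Delta(t_0)$, which Lemma~\ref{lem:qprime} gives under $\gamma'(t_0)=a(1,\varepsilon i)$, i.e.\ under $J\gamma'(t_0)=-\varepsilon i\,\gamma'(t_0)$. In the same computation it then uses $J\gamma'(t_0)=+\varepsilon i\,\gamma'(t_0)$, and that mismatch produces the numerator $(1+\varepsilon i\lambda)^2$.

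The gap in your proposal is that you stop at the decisive step and leave $C(\lambda)$ unsimplified, ``to be matched''. Done consistently, $J\gamma'(t_0)=\varepsilon i\,\gamma'(t_0)$ forces $\gamma'(t_0)=a(1,-\varepsilon i)$. Hence $q'(t_0)=-2\varepsilon i\,\Delta(t_0)$, $\phi'(t_0)=-2\varepsilon i\lambda/(1+\lambda^2)$, and
\[
C(\lambda)=1-\frac{2\varepsilon i\lambda\,(1+\varepsilon i\lambda)}{(1+\varepsilon i\lambda)(1-\varepsilon i\lambda)}=\frac{1-3\varepsilon i\lambda}{1-\varepsilon i\lambda},
\]
which is not the stated coefficient. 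Your test curve confirms this. For $\gamma(t)=(t,\,it+t^2/2)$ one has $\gamma'(0)=(1,i)$, $J\gamma'(0)=-i\,\gamma'(0)$ (so $\varepsilon=-1$), $q=2it+t^2$, $\Delta\equiv 1$, and directly
\[
(E^\lambda)'(0)=(1,i)+\frac{2i\lambda}{1+\lambda^2}\,(1-i\lambda,\;i+\lambda)=\frac{1+3i\lambda}{1+i\lambda}\,(1,i).
\]
At $\lambda=1$ this is $(2+i)\gamma'(0)$. The statement predicts $-i\,\gamma'(0)$, or $+i\,\gamma'(0)$ with the opposite sign convention, so it fails either way.

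Your final paragraph is therefore too optimistic. The numerator of $C(\lambda)$ does vanish, at $\lambda=-\varepsilon i/3$, where $\lambda^2=-1/9\neq-1$. On the test curve, $\lambda=i/3$ gives $(E^{\lambda})'(0)=(1,i)-\tfrac34\,(\tfrac43,\tfrac{4i}{3})=\mathbf{0}$. Conceptually, $\kappa_s/\kappa^2=q\Delta'/\Delta^2-3q'/(2\Delta)$ extends holomorphically with value $3\varepsilon i$ at $t_0$. So the isotropic point behaves as a $\lambda$-vertex for $\lambda=-\varepsilon i/3$, and $c^\lambda(t_0)=\frac{1-3\varepsilon i\lambda}{1+\lambda^2}$.

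The statement is thus false as written: both the derivative formula and the claim of regularity for every $\lambda^2\neq-1$ fail. No completion of your argument can prove it. What your computation does prove, once finished, is the following. We have $E^\lambda(t_0)=\gamma(t_0)$ and $(E^\lambda)'(t_0)=\frac{1-3\varepsilon i\lambda}{1-\varepsilon i\lambda}\,\gamma'(t_0)$. Hence $E^\lambda$ is regular at $t_0$ and tangent to $\gamma$ there if and only if $\lambda^2\neq-1$ and $\lambda\neq-\varepsilon i/3$. You should commit to that corrected statement instead of leaving the constant open.
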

\begin{proof}
The equality $E^\lambda(t_0) = \gamma(t_0)$ follows directly from Theorem~\ref{thm:isotropicextension} since $q(t_0) = 0$.
Define the scalar function
\[
A(t) = \frac{\lambda q(t)}{\Delta(t)(1+\lambda^2)}.
\]
The square-root-free representation of the evolutoid (as in \eqref{eq:squarefree}) is given by
\(
E^\lambda = \gamma + A\left(\gamma' + \lambda J\gamma'\right).
\)
Differentiating $E^\lambda$ with respect to $t$ yields
\[
(E^\lambda)' = \gamma' + A'\left(\gamma' + \lambda J\gamma'\right) + A\left(\gamma'' + \lambda J\gamma''\right).
\]
Since $q(t_0) = 0$, we have $A(t_0) = 0$, which eliminates the final term at $t_0$, hence
\[
(E^\lambda)'(t_0) = \gamma'(t_0) + A'(t_0)\left(\gamma'(t_0) + \lambda J\gamma'(t_0)\right).
\]
Now, differentiating $A(t)$ and evaluating at $t_0$ where $q(t_0) = 0$ gives
\[
A'(t_0) = \frac{\lambda q'(t_0)}{\Delta(t_0)(1+\lambda^2)}.
\]
Using Lemma~\ref{lem:qprime}, we substitute $q'(t_0) = 2\varepsilon i \Delta(t_0)$ to obtain
\[
A'(t_0) = \frac{2\varepsilon i\lambda}{1+\lambda^2}.
\]
Using the eigenvector relation $J\gamma'(t_0) = \varepsilon i \gamma'(t_0)$, the vector term simplifies to
\[
\gamma'(t_0) + \lambda J\gamma'(t_0) = (1 + \varepsilon i\lambda)\gamma'(t_0).
\]
Substituting $A'(t_0)$ and this vector expression into $(E^\lambda)'(t_0)$ yields
\[
(E^\lambda)'(t_0) = \left[ 1 + \frac{2\varepsilon i\lambda(1 + \varepsilon i\lambda)}{1+\lambda^2} \right] \gamma'(t_0).
\]
Combining terms over a common denominator gives
\[
1 + \frac{2\varepsilon i\lambda(1 + \varepsilon i\lambda)}{1+\lambda^2} = \frac{(1+\lambda^2) + 2\varepsilon i\lambda + 2(\varepsilon i)^2\lambda^2}{1+\lambda^2}.
\]
Recalling that $(\varepsilon i)^2 = -1$, the numerator simplifies as follows:
\[
(1+\lambda^2) + 2\varepsilon i\lambda - 2\lambda^2 = 1 + 2\varepsilon i\lambda - \lambda^2 = (1 + \varepsilon i\lambda)^2.
\]
Factoring the denominator $1+\lambda^2 = (1+\varepsilon i\lambda)(1-\varepsilon i\lambda)$, we cancel the common factor $(1+\varepsilon i\lambda)$ and obtain
\[
\frac{(1 + \varepsilon i\lambda)^2}{(1 + \varepsilon i\lambda)(1 - \varepsilon i\lambda)} = \frac{1 + \varepsilon i\lambda}{1 - \varepsilon i\lambda}.
\]
Thus, the derivative evaluates to
\[
(E^\lambda)'(t_0) = \frac{1+\varepsilon i\lambda}{1-\varepsilon i\lambda} \, \gamma'(t_0).
\]
Since for any $\lambda^2 \neq -1$, neither $1+\varepsilon i\lambda$ nor $1-\varepsilon i\lambda$ vanishes, so consequently, $(E^\lambda)'(t_0) \neq 0$, confirming that $E^\lambda$ is regular at $t_0$ and collinear with $\gamma'(t_0)$.
\end{proof}

\begin{remark}
Recall that a regular point $t_0$ of a holomorphic plane curve
$\gamma\to\mathbb C^2$ is called an \emph{isotropic point} if
$
\langle \gamma'(t_0),\gamma'(t_0)\rangle=0.
$
We call $t_0$ an \emph{ordinary isotropic point} if this zero is simple, namely,
\[
\langle \gamma'(t_0),\gamma'(t_0)\rangle=0,
\qquad
\frac{d}{dt}\langle \gamma'(t),\gamma'(t)\rangle\bigg|_{t=t_0}\neq0.
\]
Theorem~\ref{thm:isotropictangent} shows that, although the tangent direction of $\gamma$ becomes isotropic at $t_0$, the evolutoid corresponding to a non-isotropic parameter $\lambda$, namely
$
1+\lambda^2\neq0,
$
extends regularly through $t_0$ and is tangent to $\gamma$ there. Thus, an
ordinary isotropic point of the original curve does not, by itself, produce
a singularity of the corresponding non-isotropic evolutoid.

This phenomenon has no direct analogue in the usual real Euclidean setting,
since the real Euclidean metric is positive definite and hence admits no
nonzero isotropic vectors. It is therefore a genuinely complex-geometric
feature of the theory.

% The behavior given in Theorem \ref{thm:isotropictangent} highlights a fundamental distinction between real and complex differential geometry, where non-isotropic complex evolutoids $(\lambda^2 \neq -1)$ remain non-singular at ordinary isotropic points, passing smoothly through the curve while remaining tangent to it.
\end{remark}
%%%%%%%%%%%%%%%%%%%%%%%%%%%%%%%%%%%%
\begin{theorem}
\label{thm:pole}
Suppose $t_0$ is an ordinary inflection of a complex curve $\gamma$ such that $q(t_0) \neq 0$. If 
\(
\ord_{t_0}(\kappa) = m,
\)
then for any parameter $\lambda \in \mathbb{C} \setminus \{0\}$ with $\lambda^2 \neq -1$, the evolutoid $E^\lambda$ has a pole of exact order $m$ at $t_0$.
More precisely, if the complex curvature admits the local expansion
\[
\kappa(t) = a(t-t_0)^m + O((t-t_0)^{m+1}), \qquad a \neq 0,
\]
then the evolutoid satisfies the Laurent expansion
\[
E^\lambda(t) = \frac{\lambda}{a(1+\lambda^2)} \frac{T(t_0) + \lambda N(t_0)}{(t-t_0)^m} + O((t-t_0)^{-m+1}).
\]
\end{theorem}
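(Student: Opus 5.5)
The plan is to work directly from the explicit envelope formula \eqref{eq:Elambda} of Theorem~\ref{thm:explicit}, now read on a punctured neighbourhood of $t_0$. Since $q(t_0)\neq 0$, Theorem~\ref{thm:frenet} supplies a holomorphic branch $\rho=\sqrt q$ on a small simply connected neighbourhood $U$ of $t_0$. On all of $U$ this gives holomorphic, nowhere vanishing frame fields $T=\gamma'/\rho$ and $N=J\gamma'/\rho$, and the function $\kappa=\Delta/\rho^3$ is holomorphic on $U$. By Proposition~\ref{thm:inflectionvertex}, the hypothesis $\kappa(t_0)=0$ is equivalent to $\Delta(t_0)=0$, and $\ord_{t_0}(\kappa)=\ord_{t_0}(\Delta)=m$ because $\rho^3$ is a unit. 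Isolated zeros of $\kappa$ let us shrink $U$ so that $\kappa\neq 0$ on $U\setminus\{t_0\}$. On that punctured disc Theorem~\ref{thm:explicit} applies, so
\[
E^\lambda(t)=\gamma(t)+\frac{\lambda}{1+\lambda^2}\,\frac{1}{\kappa(t)}\,\bigl(T(t)+\lambda N(t)\bigr),
\]
where the parameter $t$ is used throughout and $T,N$ are regarded as functions of $t$.

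The key step is to factor the curvature. Write $\kappa(t)=(t-t_0)^m h(t)$ with $h$ holomorphic on $U$ and $h(t_0)=a\neq 0$. Then $1/\kappa=(t-t_0)^{-m}/h$, and $1/h$ is holomorphic near $t_0$ with $1/h(t)=a^{-1}+O(t-t_0)$. Set
\[
V(t)=\frac{\lambda}{(1+\lambda^2)h(t)}\bigl(T(t)+\lambda N(t)\bigr).
\]
This is a holomorphic vector-valued function near $t_0$, and
\[
E^\lambda(t)=\gamma(t)+(t-t_0)^{-m}V(t).
\]
Taylor-expanding $V$ at $t_0$ gives the leading coefficient $V(t_0)=\frac{\lambda}{a(1+\lambda^2)}(T(t_0)+\lambda N(t_0))$. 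The remaining terms of $V$ contribute $O((t-t_0)^{-m+1})$. The holomorphic term $\gamma(t)$ is $O(1)$, which is absorbed into $O((t-t_0)^{-m+1})$ since $m\ge 1$. This yields the stated Laurent expansion.

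Exactness of the pole order reduces to showing $V(t_0)\neq 0$. The scalar factor $\frac{\lambda}{a(1+\lambda^2)}$ is nonzero precisely because $\lambda\neq 0$, $a\neq 0$ and $\lambda^2\neq -1$; this is where the hypothesis $\lambda\neq 0$ enters, since $E^0=\gamma$ has no pole. The vector $T(t_0)+\lambda N(t_0)$ is nonzero because $\{T,N\}$ is a basis, with $\det(T,N)=1$ as used in the proof of Theorem~\ref{thm:explicit}. Hence the coefficient of $(t-t_0)^{-m}$ is a nonzero vector, and the pole has exact order $m$. Equivalently, $E^\lambda(t)\to\infty$ in the direction $[T(t_0)+\lambda N(t_0)]$, which is the point on $\ell^\infty\subset\mathbb{CP}^2$.

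The only delicate point is consistency of the pole order under the change of parameter. Theorem~\ref{thm:explicit} is stated in arclength $s$, while the expansion is in $t$. Because $ds/dt=\rho$ is holomorphic and nonvanishing at $t_0$, the map $t\mapsto s$ is biholomorphic near $t_0$. So orders of zeros and poles agree in $s$ and $t$, and the formula \eqref{eq:Elambda}, being pointwise, can be evaluated at $s(t)$ directly. One could also note that the word ``ordinary'' sits in tension with allowing general $m$. The argument above works for any $m\ge 1$, with no use of $\kappa'(t_0)\neq 0$, so nothing beyond $\ord_{t_0}(\kappa)=m$ is needed.
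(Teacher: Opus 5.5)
Your proposal is correct and takes essentially the same approach as the paper: factor $\kappa(t)=(t-t_0)^m h(t)$ with $h(t_0)=a\neq 0$, read off the leading Laurent coefficient from \eqref{eq:Elambda}, and absorb the holomorphic term $\gamma(t)$ into the $O((t-t_0)^{-m+1})$ remainder. The only difference is how you show $T(t_0)+\lambda N(t_0)\neq 0$: you use $\det(T,N)=1$, whereas the paper argues via the eigenvalues of $J$. Your version is cleaner and shows this vector is nonzero for every $\lambda$, so the hypotheses $\lambda\neq 0$ and $\lambda^2\neq -1$ are needed only for the scalar prefactor.
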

\begin{proof}
By Theorem~\ref{thm:explicit}, the extended evolutoid is given by
\[
E^\lambda(t) - \gamma(t) = \frac{\lambda}{\kappa(t)(1+\lambda^2)} \Big( T(t) + \lambda N(t) \Big).
\]
Because $q(t_0) \neq 0$, the complex unit tangent $T(t) = \gamma'(t)/\sqrt{q(t)}$ and unit normal $N(t) = J T(t)$ are holomorphic in a neighborhood $U$ of $t_0$. Consequently, the vector factor admits the Taylor expansion
\[
T(t) + \lambda N(t) = T(t_0) + \lambda N(t_0) + O(t-t_0).
\]
Factoring out $a(t-t_0)^m$ from the local expansion of $\kappa(t)$, we have $\kappa(t) = a(t-t_0)^m \left( 1 + O(t-t_0) \right)$. Applying the geometric series yields the reciprocal expansion
\[
\frac{1}{\kappa(t)} = \frac{1}{a(t-t_0)^m} + O((t-t_0)^{-m+1}).
\]
Multiplying the expansions for $1/\kappa(t)$ and $T(t) + \lambda N(t)$ gives
\[
E^\lambda(t) - \gamma(t) = \frac{\lambda}{1+\lambda^2} \left[ \frac{1}{a(t-t_0)^m} + O((t-t_0)^{-m+1}) \right] \left[ T(t_0) + \lambda N(t_0) + O(t-t_0) \right].
\]
Since $\gamma(t) = \gamma(t_0) + O(t-t_0)$ is holomorphic at $t_0$, it is absorbed into the $O((t-t_0)^{-m+1})$ remainder term for any $m \ge 1$. Expanding the principal part yields
\[
E^\lambda(t) = \frac{\lambda}{a(1+\lambda^2)} \frac{T(t_0) + \lambda N(t_0)}{(t-t_0)^m} + O((t-t_0)^{-m+1}).
\]
To verify that $t_0$ is a pole of exact order $m$, we confirm that the leading coefficient vector $\mathbf{v}_0 = T(t_0) + \lambda N(t_0)$ does not vanish. Since $N(t_0) = J T(t_0)$, the identity $T(t_0) + \lambda J T(t_0) = \mathbf{0}$ holds if and only if $J T(t_0) = -\frac{1}{\lambda} T(t_0)$. Thus, $T(t_0)$ would be an eigenvector of $J$ with eigenvalue $-\frac{1}{\lambda}$. Because $J^2 = -I$, its eigenvalues are strictly $\pm i$, requiring
\[
-\frac{1}{\lambda} = \pm i \implies \lambda = \pm i \implies \lambda^2 = -1.
\]
By hypothesis $\lambda^2 \neq -1$ and $\lambda \neq 0$, so $\mathbf{v}_0 \neq \mathbf{0}$. Therefore, $E^\lambda(t)$ possesses a pole of exact order $m$ at $t_0$.
\end{proof}

\begin{corollary}
At an ordinary inflection point $t_0$ where $\kappa(t_0) = 0$ and $\kappa'(t_0) \neq 0$ ($m=1$), any finite non-isotropic evolutoid ($\lambda \neq 0, \lambda^2 \neq -1$) possesses a simple pole:
\[
E^\lambda(t) = \frac{\lambda}{\kappa'(t_0)(1+\lambda^2)} \frac{T(t_0) + \lambda N(t_0)}{t-t_0} + O(1) \quad \text{as } t \to t_0.
\]
\end{corollary}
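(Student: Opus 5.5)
The corollary is the case $m=1$ of Theorem~\ref{thm:pole}, so the plan is to specialize that theorem and then identify the leading coefficient $a$ in terms of $\kappa'(t_0)$.

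First, since $\kappa(t_0)=0$ and $\kappa'(t_0)\neq 0$, we have $\ord_{t_0}(\kappa)=1$. Because $q(t_0)\neq 0$, the curvature $\kappa$ is holomorphic near $t_0$ by Theorem~\ref{thm:frenet}, and Taylor's theorem gives
\[
\kappa(t)=\kappa'(t_0)(t-t_0)+O((t-t_0)^2).
\]
Thus the hypothesis of Theorem~\ref{thm:pole} holds with $m=1$ and $a=\kappa'(t_0)$. Here $\kappa'$ denotes the derivative with respect to $t$, following the paper's notation, and this matches the variable used in the Laurent expansion.

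Next, I would substitute these values into the conclusion of Theorem~\ref{thm:pole}. This gives
\[
E^\lambda(t)=\frac{\lambda}{\kappa'(t_0)(1+\lambda^2)}\frac{T(t_0)+\lambda N(t_0)}{t-t_0}+O(1).
\]
The remainder term $O((t-t_0)^{-m+1})$ becomes $O(1)$ when $m=1$.

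It remains to check that the pole is genuinely simple, that is, that the residue vector is nonzero. The argument in Theorem~\ref{thm:pole} already shows $T(t_0)+\lambda N(t_0)\neq 0$ when $\lambda^2\neq -1$. The scalar factor $\lambda/(\kappa'(t_0)(1+\lambda^2))$ is nonzero because $\lambda\neq 0$, so the pole has exact order one.

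There is no real obstacle in this argument. The only point requiring care is to confirm that $\kappa'$ is taken in the same parameter $t$ as the expansion, which it is.
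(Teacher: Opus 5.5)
Your proposal is correct and matches the paper, which states this corollary without a separate proof as the $m=1$ specialization of Theorem~\ref{thm:pole}, with $a=\kappa'(t_0)$ and the remainder $O((t-t_0)^{-m+1})$ reducing to $O(1)$. Your use of $q(t_0)\neq 0$ is justified because Definition~\ref{def:inf+vert} defines inflections only at non-isotropic points. Your argument that the residue vector is nonzero is also correct; in fact $T(t_0)+\lambda N(t_0)\neq 0$ for every $\lambda$, since $\det(T,N)=1$.
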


\begin{remark}
The tangent envelope member $\lambda = 0$ is exceptional since $E_0(t) \equiv \gamma(t)$. Consequently, $E_0$ has no pole at $t_0$, in contrast to the case $\lambda\neq0$.
\end{remark}
%%%%%%%%%%%%%%%%%%%%%%%%
\begin{remark}
\label{thm:exceptional}
For $\lambda = \pm i$, according to Remark \ref{rem:Dpmi}, the vector field
\(
D^\lambda = T + \lambda N
\)
is isotropic. 
Moreover, $D^\lambda$ is an eigen-direction under differentiation along the arc-length parameter $s$:
\[
D^\lambda_{s} = -\lambda \kappa D^\lambda.
\]
Consequently, the frame determinant vanishes,
\(
\det(D^\lambda, D^\lambda_{s}) = 0.
\)
The envelope equation $\det(F_s, D^\lambda) = 0$ for the line family $F(s,u) = \gamma(s) + u D^\lambda(s)$ has no finite solution $u \in \mathbb{C}$. Thus, $\lambda = \pm i$ represent exceptional isotropic directions that do not generate ordinary finite members of the evolutoid family.
\end{remark}
\begin{proof}
Differentiating $D^\lambda$ with respect to the complex arc-length parameter $s$ via the Frenet--Serret equations $T_s = \kappa N$ and $N_s = -\kappa T$ yields
\[
D^\lambda_{s} = T_s + \lambda N_s = \kappa N - \lambda \kappa T = \kappa (N - \lambda T).
\]
Factoring out $-\lambda \kappa$ gives
\[
D^\lambda_{s} = -\lambda \kappa \left( T - \frac{1}{\lambda} N \right).
\]
Since $\lambda^2 = -1$, we have $-\frac{1}{\lambda} = \lambda$. Substituting this relation back into the expression yields
\[
D^\lambda_{s} = -\lambda \kappa (T + \lambda N) = -\lambda \kappa D^\lambda.
\]
Because $D^\lambda_{s}$ is strictly collinear with $D^\lambda$, their two-dimensional determinant vanishes identically,
\(
\det(D^\lambda, D^\lambda_{s}) = \det(D^\lambda, -\lambda \kappa D^\lambda) = 0.
\)

Now consider the ruling parameterization $F(s,u) = \gamma(s) + u D^\lambda(s)$ of the line family. The tangent vector to the surface $F$ with respect to $s$ is
\[
F_s = \gamma_s + u D^{\lambda}_{s} = T + u (-\lambda \kappa D^\lambda).
\]
To find the envelope curve, we compute the determinant condition $\det(F_s, D^\lambda) = 0$:
\[
\det(F_s, D^\lambda) = \det(T - u \lambda \kappa D^\lambda, D^\lambda) = \det(T, D^\lambda) - u \lambda \kappa \det(D^\lambda, D^\lambda).
\]
Since any vector determinant with itself vanishes ($\det(D^\lambda, D^\lambda) = 0$), the second term drops out entirely. Substituting $D^\lambda = T + \lambda N$ into the first term gives
\[
\det(F_s, D^\lambda) = \det(T, T + \lambda N) = \det(T, T) + \lambda \det(T, N) = \lambda \det(T, N).
\]
Using the oriented frame normalization $\det(T, N) = 1$, we obtain
\[
\det(F_s, D^\lambda) = \lambda = \pm i \neq 0.
\]
Because $\det(F_s, D^\lambda) = \pm i$ is a non-zero constant independent of $s$ and $u$, the envelope condition $\det(F_s, D^\lambda) = 0$ has no solution $u \in \mathbb{C}$ at any finite distance. Hence, the envelopes for $\lambda = \pm i$ degenerate to points at infinity in the complex projective plane $\mathbb{CP}^2$.
\end{proof}

\begin{remark}
Unlike classical real evolutoids where all angles $\theta \in \mathbb{C}$ yield well-defined finite envelopes, the complexification introduces two distinguished isotropic directions $\lambda = \pm i$ corresponding to the cyclic points at infinity $(1 : \pm i : 0) \in \mathbb{CP}^2$. Along these rulings, the family of lines forms a parallel beam in $\mathbb{C}^2$, placing the envelope focus at infinity.
\end{remark}
%%%%%%%%%%%%%%%%%%%%%%%%%%
\section{Local singularity theory of the evolutoid}\label{sec:local}
Let $s_0$ be a point satisfying the non-degeneracy conditions
\[
q(s_0) \neq 0, \qquad \kappa(s_0) \neq 0, \qquad \lambda^2 \neq -1.
\]
Define the scalar derivative coefficient
\begin{equation}\label{eq:clambda}
\displaystyle c^\lambda(s) = \frac{1 - \lambda \kappa_s(s)/\kappa(s)^2}{1 + \lambda^2}.
\end{equation}
The spatial derivative of the evolutoid then factors as
\(
E^\lambda_{s}(s) = c^\lambda(s) D^\lambda(s).
\)
\begin{definition}\label{def:sing}
Suppose $E^\lambda_{s}(s_0) = \mathbf{0}$. We define the \emph{singularity multiplicity} $m$ as
\(
m = \ord_{s_0}(c^\lambda).
\)
Equivalently, $c^\lambda$ admits the local expansion
\[
c^\lambda(s) = a(s - s_0)^m + O((s - s_0)^{m+1}), \qquad a \neq 0.
\]
\end{definition}

\begin{theorem}
\label{thm:localnormal}
Assume $q(s_0) \neq 0$, $\kappa(s_0) \neq 0$, and $\lambda^2 \neq -1$. If $\ord_{s_0}(c^\lambda) = m \ge 1$, then after a local holomorphic change of source coordinates and a local holomorphic affine transformation in the target, the germ of $E^\lambda$ at $s_0$ has the parametrization
\[
u \longmapsto \left( u^{m+1}, u^{m+2} + O(u^{m+3}) \right).
\]
In particular, for an ordinary singularity ($m = 1$), the germ is analytically equivalent to the ordinary cusp ($A_2$-singularity)
\(
u \longmapsto (u^2, u^3).
\)
\end{theorem}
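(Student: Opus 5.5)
The plan is to work in the arclength parameter $s$ near $s_0$, where $T,N$ are holomorphic by Theorem~\ref{thm:frenet}, and integrate the factorisation $E^\lambda_s = c^\lambda D^\lambda$ coming from Proposition~\ref{thm:fundamental}. Set $\sigma = s-s_0$. The first step is to expand the direction field $D^\lambda = T+\lambda N$ with the Frenet formulas:
\[
D^\lambda(s) = D_0 + \kappa_0\,\sigma\,(N_0-\lambda T_0) + O(\sigma^2),
\]
where $D_0 = T_0+\lambda N_0$ and $\kappa_0=\kappa(s_0)$. The computation in the proof of Theorem~\ref{thm:explicit} gives
\[
\det(N_0-\lambda T_0,\,D_0) = -(1+\lambda^2).
\]
Hence $W := \kappa_0(N_0-\lambda T_0)$ satisfies $\det(D_0,W) = \kappa_0(1+\lambda^2) \neq 0$, and $\{D_0,W\}$ is a basis of $\C^2$. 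The hypotheses $\kappa_0\neq 0$ and $\lambda^2\neq-1$ are used exactly here.

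Next, write $c^\lambda(s) = a\sigma^m(1+O(\sigma))$ with $a\neq 0$, following Definition~\ref{def:sing}. Then
\[
E^\lambda_s = a\sigma^m D_0 + a\sigma^{m+1} W + O(\sigma^{m+2}).
\]
Integrating from $s_0$ gives
\[
E^\lambda(s) - E^\lambda(s_0) = \frac{a}{m+1}\sigma^{m+1} D_0 + \frac{a}{m+2}\sigma^{m+2} W + O(\sigma^{m+3}).
\]
Apply the target affine map that translates $E^\lambda(s_0)$ to the origin and sends the basis $\{D_0, W\}$ to the standard basis. In these coordinates the components become
\[
X(\sigma) = \tfrac{a}{m+1}\sigma^{m+1}(1+O(\sigma)), \qquad Y(\sigma) = \tfrac{a}{m+2}\sigma^{m+2}(1+O(\sigma)) + (\text{possibly } O(\sigma^{m+2})\text{ terms from } D_0\text{-corrections}).
\]
The bracketed correction needs care. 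The $O(\sigma^2)$ term of $D^\lambda$ may have a $W$-component, but multiplied by $c^\lambda$ it only contributes at order $\sigma^{m+3}$ after integration, so $Y$ genuinely starts at $\sigma^{m+2}$ with nonzero coefficient.

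The source change comes next. Since $X = \sigma^{m+1}h(\sigma)$ with $h(0)\neq 0$, take a holomorphic $(m+1)$-th root and set $u = \sigma\,h(\sigma)^{1/(m+1)}$. This is a local biholomorphism with $X = u^{m+1}$, and $\sigma = u + O(u^2)$. Substituting gives $Y = b u^{m+2} + O(u^{m+3})$ with $b\neq 0$. Rescaling the second target coordinate by $1/b$, which is linear and hence affine, yields the claimed form $(u^{m+1}, u^{m+2}+O(u^{m+3}))$.

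For $m=1$ I must upgrade the result to analytic equivalence with $(u^2,u^3)$. The germ $(u^2, u^3 + O(u^4))$ has multiplicity 2 and the gap $\{2,3\}$ in its semigroup, so it is $\mathcal A$-finitely determined of type $A_2$. Concretely, $u^3 g(u)$ with $g(0)\neq 0$ splits into even and odd parts as $u^3 g_o(u^2) + u^4 g_e(u^2)$. The even part $u^4 g_e(u^2)$ is a function of $X$ and can be removed by the target change $Y\mapsto Y-\phi(X)$. The remaining odd part $u^3\psi(u^2)$ with $\psi(0)\neq0$ is removed by the substitution $u\mapsto u\,\psi(u^2)^{1/3}$. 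Because this substitution is odd, $u^2$ is changed only by a unit function of $u^2$, which can be absorbed by a target change in $X$.

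The main obstacle is this last normalisation step for $m=1$, because it requires nonlinear target diffeomorphisms rather than purely affine ones. I would phrase the equivalence as $\mathcal A$-equivalence and note that "analytically equivalent" in the statement permits it. The rest is routine bookkeeping of orders.
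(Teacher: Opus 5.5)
Your proof is correct and follows the paper's argument essentially step for step: the Frenet expansion of $D^\lambda$, the non-degeneracy $\det(D_0,W)=\kappa_0(1+\lambda^2)\neq 0$, integration of $c^\lambda D^\lambda$ in the basis $\{D_0,W\}$ (the paper's $\{\xi_0,\xi_1\}$), then an $(m+1)$-th root reparametrization and a rescaling. Your display of $E^\lambda_s$ omits the $\sigma^{m+1}D_0$ term coming from the next Taylor coefficient of $c^\lambda$, but this is harmless because it only enters $X$. Your explicit even/odd normalisation for $m=1$, with the correct observation that it needs a non-affine target change, fills in a step the paper only sketches in Corollary~\ref{thm:A2}: there a change of the form $(x,y)\mapsto(x,y-p(x))$ by itself removes only the even-order terms of $y(u)$.
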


\begin{proof}
First, we translate the source parameter so that $s_0 = 0$. Now, we expand $c^\lambda(s)$ and $D^\lambda(s)$ near $s = 0$, i.e.,
\begin{eqnarray*}
c^\lambda(s) &=& a ~s^m + b~ s^{m+1} + O(s^{m+2}), \qquad a \neq 0,\\
D^\lambda(s) &=& \xi_0 + \xi_1~ s + O(s^2).
\end{eqnarray*}
Using the complex Frenet equations, the frame vectors and their derivatives at $s = 0$ evaluate to
\(
\xi_0 = T_0 + \lambda N_0, ~ \xi_1 = \kappa_0(N_0 - \lambda T_0).
\)
Computing the determinant with respect to the frame $\{T_0, N_0\}$ yields
\[
\det(\xi_0, \xi_1) = \det\Big(T_0 + \lambda N_0, \, \kappa_0(N_0 - \lambda T_0)\Big) = \kappa_0(1 + \lambda^2).
\]
Since $\kappa_0 \neq 0$ and $1 + \lambda^2 \neq 0$, we establish that
\(
\det(\xi_0, \xi_1) \neq 0,
\)
confirming that $\{\xi_0, \xi_1\}$ forms an affine coordinate basis for $\mathbb{C}^2$.

Integrating $(E^\lambda)'(s) = c^\lambda(s)~ D^\lambda(s)$ with $E^\lambda(0) = \mathbf{0}$ gives
\[
E^\lambda(s) = \frac{a}{m+1} \xi_0 ~s^{m+1} + \left( \frac{a}{m+2} \xi_1 + \frac{b}{m+2} \xi_0 \right)~ s^{m+2} + O(s^{m+3}).
\]
Decomposing $E^\lambda(s) = X(s)~ \xi_0 + Y(s)~ \xi_1$ into the $\{\xi_0, \xi_1\}$ basis yields
\begin{eqnarray*}
X(s) &=& \frac{a}{m+1}~ s^{m+1} + \frac{b}{m+2}~ s^{m+2} + O(s^{m+3}),\\
Y(s) &=& \frac{a}{m+2} ~s^{m+2} + O(s^{m+3}).
\end{eqnarray*}
By the inverse function theorem, there exists a unique local holomorphic reparametrization $u = \phi(s)$ satisfying $u^{m+1} = X(s)$ with $\phi'(0) \neq 0$. Expressing $Y$ in terms of $u$ gives 
\[Y(s(u)) = C_0~ u^{m+2} + O(u^{m+3}) \quad \text{ with }\quad C_0 \neq 0.\] 
Standard target scaling $(x, y) = (X, Y/C_0)$ normalizes the germ to $(u^{m+1}, u^{m+2} + O(u^{m+3}))$.
\end{proof}
%%%%%%%%%%%%%%%%%%%%%%%%%
\begin{corollary}\label{thm:A2}
Suppose $E^\lambda_{s}(s_0) = \mathbf{0}$ and $(c^\lambda)'(s_0) \neq 0$. Then the germ of $E^\lambda$ at $s_0$ is an ordinary cusp, analytically right-left equivalent to
\(
u \longmapsto (u^2, u^3)
\), (for details on singularity theory see for instance \cite{Arnold81}).
\end{corollary}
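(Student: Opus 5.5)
The corollary is the case $m=1$ of Theorem~\ref{thm:localnormal}. I will first check that its hypotheses hold. Then I will upgrade the prenormal form $(u^2,u^3+O(u^4))$ to exact analytic $\mathcal A$-equivalence with $(u^2,u^3)$.

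\textbf{Step 1: the hypotheses give $m=1$.} The standing assumptions of this section are $q(s_0)\neq0$, $\kappa(s_0)\neq0$ and $\lambda^2\neq-1$. Under these, $D^\lambda(s_0)\neq0$ by the eigenvalue argument in the proof of Theorem~\ref{thm:pole}. So $E^\lambda_s(s_0)=c^\lambda(s_0)D^\lambda(s_0)=\mathbf 0$ forces $c^\lambda(s_0)=0$. Together with $(c^\lambda)'(s_0)\neq0$ this gives $\ord_{s_0}(c^\lambda)=1$. Theorem~\ref{thm:localnormal} then applies with $m=1$. After a holomorphic source change $u=\phi(s)$ and an affine target change, the germ becomes
\[
u\longmapsto\bigl(u^2,\;u^3+g(u)\bigr),\qquad g(u)=O(u^4).
\]

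\textbf{Step 2: remove the higher-order terms.} Split $g$ into even and odd parts, $g(u)=g_e(u^2)+u^3h(u^2)$ with $g_e(w)=O(w^2)$ and $h$ holomorphic, $h(0)=0$. This is possible because every even-order term in $g$ is a power of $u^2$, and every odd-order term of degree at least $5$ is $u^3$ times a power of $u^2$. With $x=u^2$, the second component reads
\[
y=x\,\sigma(x)+u^3\bigl(1+h(x)\bigr),\qquad x\,\sigma(x)=g_e(x),
\]
so $\sigma(x)=O(x)$ is holomorphic. Now apply two target diffeomorphisms in turn. First subtract $x\sigma(x)$ from $y$; this is a holomorphic diffeomorphism of $(\C^2,0)$ preserving $x$. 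Second replace $y$ by $y/(1+h(x))$; this is a diffeomorphism because $1+h(0)=1\neq0$. The composite carries the germ exactly onto $u\mapsto(u^2,u^3)$. Hence the germ is right-left analytically equivalent to the ordinary cusp.

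\textbf{Alternative and main obstacle.} One could instead invoke finite determinacy: $(u^2,u^3)$ is $3$-$\mathcal A$-determined (see \cite{Arnold81}), and the $3$-jet $(u^2,u^3)$ of the germ already has that form. The only step needing real care is Step 2. Theorem~\ref{thm:localnormal} gives only the $3$-jet, and exact equivalence requires absorbing all higher terms through target changes that depend on $x=u^2$ alone. The even/odd decomposition handles this. The one subtle point is checking that the odd remainder is divisible by $u^3$ with a unit cofactor, which holds because $g$ has no $u^3$ term. Everything else is bookkeeping.
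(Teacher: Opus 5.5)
Your proof is correct and follows the paper's route: specialize Theorem~\ref{thm:localnormal} to $m=1$, reparametrize so that $x=u^2$, then straighten the second component by target changes depending only on $x$. Your Step~2 is more careful than the paper's, which uses only the triangular change $(x,y)\mapsto(x,y-p(x))$; that change can only remove even-order terms and leaves odd ones such as $u^5=x\,u^3$, so your extra rescaling $y\mapsto y/(1+h(x))$ is exactly what is needed to reach $(u^2,u^3)$ exactly.
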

\begin{proof}
The non-vanishing derivative condition $(c^\lambda)'(s_0) \neq 0$ implies that $s_0$ is a root of $c^\lambda$ of order $m = \ord_{s_0}(c^\lambda) = 1$. Applying Theorem~\ref{thm:localnormal} with $m = 1$, the Taylor expansion of $E^\lambda$ centered at $s_0$ simplifies to
\[
E^\lambda(s) - E^\lambda(s_0) = \frac{a}{2} \xi_0~ (s-s_0)^2 + \frac{a}{3} \xi_1~ (s-s_0)^3 + O((s-s_0)^4),
\]
where $a = (c^\lambda)'(s_0) \neq 0$, $\xi_0 = D^\lambda(s_0)$, and $\xi_1 = (D^\lambda)'(s_0)$.

Because $\kappa(s_0) \neq 0$ and $\lambda^2 \neq -1$, the frame vectors $\xi_0$ and $\xi_1$ satisfy $\det(\xi_0, \xi_1) = \kappa(s_0)(1+\lambda^2) \neq 0$, forming a valid affine basis for $\mathbb{C}^2$. Under the linear change of target coordinates $(x, y)$ aligned with $\{ \frac{a}{2}\xi_0, \, \frac{a}{3}\xi_1 \}$, the curve takes the parametric form
\[
x(s) = (s-s_0)^2 + O((s-s_0)^3), \qquad y(s) = (s-s_0)^3 + O((s-s_0)^4).
\]
Applying the local holomorphic reparametrization $u = \phi(s) = (s-s_0) \sqrt{1 + O(s-s_0)}$, we obtain $x(u) = u^2$ and $y(u) = u^3 + O(u^4)$. A standard triangular coordinate transformation $(x, y) \mapsto (x, y - p(x))$ eliminates the higher-order terms $O(u^4)$, reducing the parametrization to $u \mapsto (u^2, u^3)$. The image satisfies the local implicit equation $y^2 - x^3 = 0$, establishing analytic equivalence to the standard $A_2$-singularity.
\end{proof}
\begin{lemma}\label{prop:cprime}
Let $h(s) = \frac{\kappa_s(s)}{\kappa(s)^2}$. Then the derivative function $c^\lambda(s) = \frac{1-\lambda h(s)}{1+\lambda^2}$ satisfies
\[
c^\lambda_{s} = -\frac{\lambda}{1+\lambda^2} \left( \frac{\kappa_{ss}}{\kappa^2} - \frac{2\kappa_s^2}{\kappa^3} \right).
\]
At a finite $\lambda$-vertex where $\kappa_s(s_0) = \frac{\kappa(s_0)^2}{\lambda}$, the necessary and sufficient condition for $E^\lambda$ to possess an ordinary $A_2$-cusp at $s_0$ is
\[
\kappa(s_0)\kappa_{ss}(s_0) - 2\kappa_s(s_0)^2 \neq 0.
\]
Equivalently, in terms of the complex radius of curvature $\rho(s) = \frac{1}{\kappa(s)}$, this condition reduces to
\(
\rho''(s_0) \neq 0.
\)
\end{lemma}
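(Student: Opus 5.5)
The plan is to differentiate $c^\lambda$ directly, then use Corollary~\ref{thm:A2} to read off the cusp condition, and finally rewrite everything in terms of $\rho = 1/\kappa$.

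\textbf{Step 1: the derivative.} Since $1+\lambda^2$ is a nonzero constant, only $h$ needs differentiating. By the quotient rule,
\[
h_s = \frac{\kappa_{ss}\kappa^2 - 2\kappa\kappa_s^2}{\kappa^4} = \frac{\kappa_{ss}}{\kappa^2} - \frac{2\kappa_s^2}{\kappa^3}.
\]
Multiplying by $-\lambda/(1+\lambda^2)$ gives the stated formula for $c^\lambda_s$. This is valid wherever $q \neq 0$ and $\kappa \neq 0$, which is the standing assumption of this section.

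\textbf{Step 2: the cusp criterion.} At a $\lambda$-vertex, the relation $\lambda\kappa_s(s_0) = \kappa(s_0)^2$ gives $c^\lambda(s_0) = 0$. By Theorem~\ref{thm:singularity}, $s_0$ is then a singular point of $E^\lambda$.

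Corollary~\ref{thm:A2} shows that $(c^\lambda)_s(s_0) \neq 0$ is sufficient for an $A_2$ cusp. For necessity, suppose $(c^\lambda)_s(s_0) = 0$. Then $\ord_{s_0}(c^\lambda) = m \ge 2$, and Theorem~\ref{thm:localnormal} gives a germ of the form $(u^{m+1}, u^{m+2} + \cdots)$. Its multiplicity is $m+1 \ge 3$. Multiplicity is an analytic invariant and the ordinary cusp has multiplicity $2$, so the germ is not an $A_2$ cusp.

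Hence the criterion is exactly $(c^\lambda)_s(s_0) \neq 0$. At a $\lambda$-vertex we have $\lambda \neq 0$, since $\kappa(s_0)^2 \neq 0$. Also $\kappa(s_0) \neq 0$, so multiplying by the nonzero factor $-(1+\lambda^2)\kappa^3/\lambda$ turns the criterion into
\[
\kappa\kappa_{ss} - 2\kappa_s^2 \neq 0 \quad \text{at } s_0.
\]

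\textbf{Step 3: the radius of curvature.} With $\rho = 1/\kappa$ we have $\rho' = -\kappa_s/\kappa^2$, and
\[
\rho'' = -\frac{\kappa_{ss}}{\kappa^2} + \frac{2\kappa_s^2}{\kappa^3} = -\frac{\kappa\kappa_{ss} - 2\kappa_s^2}{\kappa^3}.
\]
So $\rho''(s_0) \neq 0$ if and only if $\kappa\kappa_{ss} - 2\kappa_s^2 \neq 0$ at $s_0$, giving the stated equivalence. As a consistency check, $c^\lambda = (1 + \lambda\rho')/(1+\lambda^2)$, so directly $c^\lambda_s = \lambda\rho''/(1+\lambda^2)$.

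\textbf{Expected obstacle.} The only subtle point is the necessity direction of the ``necessary and sufficient'' claim in Step 2. It depends on the multiplicity invariant ruling out higher-order cusps, which Theorem~\ref{thm:localnormal} supplies. The remaining computations are routine.
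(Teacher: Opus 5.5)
Your proposal is correct and follows essentially the same route as the paper: you differentiate $h$, reduce the $A_2$ condition to $c^\lambda_s(s_0)\neq 0$, clear the nonzero factor, and rewrite via $\rho=1/\kappa$ using $\rho''=-h_s$. The one difference is that you prove the necessity direction explicitly, via Theorem~\ref{thm:localnormal} and the invariance of multiplicity, whereas the paper attributes the ``if and only if'' to Corollary~\ref{thm:A2}, which only gives sufficiency; the one case you leave implicit, $c^\lambda\equiv 0$ near $s_0$, gives a constant germ and is trivially not a cusp.
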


\begin{proof}
Differentiating the ratio $h(s) = \kappa_s \kappa^{-2}$ with respect to the complex arc-length parameter $s$ gives
\[
h_s = \frac{\kappa_{ss} \kappa^2 - \kappa_s (2\kappa \kappa_s)}{\kappa^4} = \frac{\kappa_{ss}}{\kappa^2} - \frac{2\kappa_s^2}{\kappa^3} = \frac{\kappa \kappa_{ss} - 2\kappa_s^2}{\kappa^3}.
\]
Differentiating $c^\lambda(s) = \frac{1 - \lambda h(s)}{1 + \lambda^2}$ yields
\[
c^\lambda_{s} = -\frac{\lambda}{1+\lambda^2} h_s = -\frac{\lambda}{1+\lambda^2} \left( \frac{\kappa \kappa_{ss} - 2\kappa_s^2}{\kappa^3} \right).
\]
By Definition~\ref{def:sing}, an evolutoid singularity occurs at $s_0$ when $c^\lambda(s_0) = 0$, which requires $h(s_0) = 1/\lambda$  (see Theorem \ref{thm:singularity}) which implies $\kappa_s(s_0) = \kappa(s_0)^2 / \lambda$. By Corollary~\ref{thm:A2}, this singularity is an ordinary $A_2$ cusp if and only if $c^\lambda_{s}(s_0) \neq 0$.

Since $\lambda \neq 0$ and $1+\lambda^2 \neq 0$, $c^\lambda_{s}(s_0) \neq 0$ is equivalent to $h_s(s_0) \neq 0$. Because $\kappa(s_0) \neq 0$, clearing the non-zero denominator $\kappa(s_0)^3$ yields
\(
\kappa(s_0)\kappa_{ss}(s_0) - 2\kappa_s(s_0)^2 \neq 0.
\)
Finally, considering the radius of curvature $\rho(s) = \kappa(s)^{-1}$, its derivatives evaluate to $\rho'(s) = -\kappa_s \kappa^{-2} = -h(s)$ and $\rho''(s) = -h_s(s)$. Thus $$h_s(s_0) \neq 0 \iff \rho''(s_0) \neq 0.$$
\end{proof}

%%%%%%%%%%%%%%%%%%%%%%%%%%%%
Let $s_0$ be an ordinary vertex of a complex curve $\gamma$, 
then the classical evolute $E = E^\infty$ has an ordinary $A_2$-cusp at $s_0$, with local parametrization right-left equivalent to
\(
E \sim_{\mathcal{A}} (u^2, u^3).
\)
To be more precise, the evolute $E^\infty$ corresponds to the limit $\lambda \to \infty$ of the evolutoid family, given explicitly by
\(
E^\infty(s) = \gamma(s) + \frac{1}{\kappa(s)} N(s).
\)
Differentiating with respect to the complex arc-length parameter $s$ and applying the Frenet formula $N_s = -\kappa T$ gives
\[
E^\infty_{s}(s) = \gamma_s + \left( \frac{1}{\kappa} \right)_s N + \frac{1}{\kappa} N_s = T - \frac{\kappa_s}{\kappa^2} N + \frac{1}{\kappa}(-\kappa T) = -\frac{\kappa_s}{\kappa^2} N(s).
\]
Set the scalar factor $c^\infty(s) = -\frac{\kappa_s(s)}{\kappa(s)^2}$. Differentiating $c^\infty(s)$ yields
\[
c^\infty_{s}(s) = -\frac{\kappa_{ss} \kappa^2 - \kappa_s (2\kappa \kappa_s)}{\kappa^4} = -\frac{\kappa_{ss}}{\kappa^2} + \frac{2\kappa_s^2}{\kappa^3}.
\]
Evaluating at $s_0$ where $\kappa_s(s_0) = 0$ and $\kappa_{ss}(s_0) \neq 0$, we find
\[
c^\infty(s_0) = 0 \qquad \text{and} \qquad c^\infty_{s}(s_0) = -\frac{\kappa_{ss}(s_0)}{\kappa(s_0)^2} \neq 0.
\]
Thus, $s_0$ is a simple zero of $c^\infty(s)$, corresponding to multiplicity $m = \ord_{s_0}(c^\infty) = 1$.

The ruling vector field for the evolute is $D^\infty(s) = N(s)$. At $s_0$, the leading frame vectors are
\[
\xi_0 = D^\infty(s_0) = N(s_0), \qquad \xi_1 = D^\infty_{s}(s_0) = N_s(s_0) = -\kappa(s_0) T(s_0).
\]
Computing the determinant with respect to the frame basis $\{T(s_0), N(s_0)\}$ yields
\[
\det(\xi_0, \xi_1) = \det(N(s_0), -\kappa(s_0) T(s_0)) = \kappa(s_0) \det(T(s_0), N(s_0)) = \kappa(s_0).
\]
Since $\kappa(s_0) \neq 0$, the frame vectors $\xi_0$ and $\xi_1$ are linearly independent ($\det(\xi_0, \xi_1) \neq 0$). Applying Theorem~\ref{thm:localnormal} for $m = 1$, the germ of $E^\infty$ at $s_0$ is analytically equivalent to $(u^2, u^3)$, which is the standard $A_2$ plane-curve cusp.

A comprehensive analysis of the evolutoid requires a deep study on generating families and contact geometry. In the next section we focus on this concept. 
% \end{remark}

\subsection{The generating family and envelope discriminant}

The line family $L^\lambda(s)$ (as in \eqref{eq:familyline}) defining the evolutoid can be globally framed as the zero-level set of a family of holomorphic affine functions.

\begin{definition}[Generating family of the evolutoid]
For $\lambda^2 \neq -1$, we define the scalar generating function $\Phi^\lambda : U \times \mathbb{C}^2 \to \mathbb{C}$ by
\[
\Phi^\lambda(s, x) = \det\left( x - \gamma(s), \, D^\lambda(s) \right),
\]
where $D^\lambda(s) = T(s) + \lambda N(s)$. For each fixed $s \in U$, the zero set
\[
L^\lambda(s) = \{ x \in \mathbb{C}^2 : \Phi^\lambda(s, x) = 0 \}
\]
defines the complex affine line passing through $\gamma(s)$ with direction $D^\lambda(s)$.
\end{definition}

\begin{proposition}
\label{prop:generatingdiscriminant}
The evolutoid $E^\lambda$ is the discriminant (envelope) of the generating family $\Phi^\lambda(s, x) = 0$. Furthermore, the second partial derivative with respect to the arc-length parameter $s$ along the evolutoid satisfies
\[
\frac{\partial^2 \Phi^\lambda}{\partial s^2}\Big|_{(s, E^\lambda(s))} = -\kappa(s)(1+\lambda^2) \, c^\lambda(s).
\]
Thus, $s_0$ is a singular point of the evolutoid image $E^\lambda$ if and only if the generating family $\Phi^\lambda$ exhibits an $A_k$-singularity with $k \ge 2$ at $(s_0, E^\lambda(s_0))$.
\end{proposition}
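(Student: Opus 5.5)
The plan is a direct computation with the generating family. I will differentiate $\Phi^\lambda$ in $s$ using only the complex Frenet formulas of Theorem~\ref{thm:frenet}. I will then use the normalizations $\det(T,N)=1$, $\det(T,D^\lambda)=\lambda$ and $\det(D^\lambda, N-\lambda T)=1+\lambda^2$, all of which already appear in the proof of Theorem~\ref{thm:explicit}. Throughout I assume $q\neq 0$, $\kappa\neq0$ and $\lambda^2\neq-1$.

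\emph{Step 1: the discriminant is $E^\lambda$.} Using $D^\lambda_s=\kappa(N-\lambda T)$, the first derivative is
\[
\Phi^\lambda_s(s,x)=\det(-T,D^\lambda)+\det(x-\gamma,D^\lambda_s)=-\lambda+\kappa\det(x-\gamma,N-\lambda T).
\]
The condition $\Phi^\lambda=0$ means $x-\gamma(s)=u\,D^\lambda(s)$ for some $u\in\C$. Substituting this gives
\[
\Phi^\lambda_s=-\lambda+u\kappa(1+\lambda^2).
\]
This vanishes exactly when $u=\lambda/(\kappa(1+\lambda^2))$. That is the same value found in Theorem~\ref{thm:explicit}, so the discriminant $\{\Phi^\lambda=\Phi^\lambda_s=0\}$ is precisely the curve $E^\lambda(s)$ of \eqref{eq:Elambda}.

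\emph{Step 2: the second derivative.} Differentiating $\Phi^\lambda_s$ once more gives
\[
\Phi^\lambda_{ss}=\det(-T_s,D^\lambda)+2\det(-T,D^\lambda_s)+\det(x-\gamma,D^\lambda_{ss}).
\]
The first term is $\det(-\kappa N,T+\lambda N)=\kappa$, and the middle term is $-2\kappa$. For the last term I compute
\[
D^\lambda_{ss}=\kappa_s(N-\lambda T)-\kappa^2 D^\lambda .
\]
On the evolutoid $x-\gamma=uD^\lambda$, so the $D^\lambda$ component drops out of the determinant. What remains is $u\kappa_s(1+\lambda^2)=\lambda\kappa_s/\kappa$. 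Summing the three terms,
\[
\Phi^\lambda_{ss}=-\kappa+\frac{\lambda\kappa_s}{\kappa}=-\kappa\left(1-\frac{\lambda\kappa_s}{\kappa^2}\right)=-\kappa(1+\lambda^2)\,c^\lambda(s),
\]
using the definition \eqref{eq:clambda} of $c^\lambda$.

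\emph{Step 3: the $A_k$ criterion.} At any point $(s,E^\lambda(s))$ we already have $\Phi^\lambda=\Phi^\lambda_s=0$. The germ of $\Phi^\lambda(\cdot,x)$ in $s$ is therefore of type $A_k$ with $k\ge2$ if and only if $\Phi^\lambda_{ss}=0$ as well. Since $\kappa(s_0)\ne0$ and $1+\lambda^2\ne0$, the formula from Step 2 shows this holds if and only if $c^\lambda(s_0)=0$. By Theorem~\ref{thm:singularity}, that is exactly the condition for $s_0$ to be a singular point of $E^\lambda$.

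The only step needing care is Step 2, and the care is bookkeeping rather than ideas. The term $\det(-T,D^\lambda_s)$ appears twice (once from each factor of the product rule), and $D^\lambda_{ss}$ must be decomposed so that its $D^\lambda$-component visibly cancels against $x-\gamma=uD^\lambda$. The rest of the argument is immediate from results already established.
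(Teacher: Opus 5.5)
Your proposal is correct and follows the paper's proof essentially step for step: both substitute $x-\gamma=uD^\lambda$ to recover $u=\lambda/(\kappa(1+\lambda^2))$, decompose $D^\lambda_{ss}=\kappa_s(N-\lambda T)-\kappa^2D^\lambda$ so that its $D^\lambda$-component drops out on the evolutoid, and use $\kappa\neq0$, $1+\lambda^2\neq0$ to get the $A_{\ge 2}$ criterion. The only cosmetic difference is that the paper drops the constant term $\det(-T,D^\lambda)=-\lambda$ before differentiating again, whereas you differentiate it as well, so your $\kappa-2\kappa$ reproduces the paper's single term $\det(-T,D^\lambda_s)=-\kappa$.
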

\begin{proof}
The envelope of the family $\Phi^\lambda(s, x) = 0$ is defined by the system
\[
\Phi^\lambda(s, x) = 0 \qquad \text{and} \qquad \frac{\partial \Phi^\lambda}{\partial s}(s, x) = 0.
\]
Differentiating $\Phi^\lambda(s, x) = \det(x - \gamma(s), D^\lambda(s))$ with respect to $s$ yields
\[
\frac{\partial \Phi^\lambda}{\partial s}(s, x) = \det\left( -\gamma'(s), D^\lambda(s) \right) + \det\left( x - \gamma(s), (D^\lambda)'(s) \right).
\]
Since $\gamma'(s) = T(s)$ and $D^\lambda(s) = T(s) + \lambda N(s)$, the first term simplifies to
\[
\det\left( -T(s), T(s) + \lambda N(s) \right) = -\lambda \det(T(s), N(s)) = -\lambda.
\]
Using $(D^\lambda)'(s) = \kappa(s)(N(s) - \lambda T(s))$, the first derivative becomes
\[
\frac{\partial \Phi^\lambda}{\partial s}(s, x) = -\lambda + \det\left( x - \gamma(s), \, \kappa(s)(N(s) - \lambda T(s)) \right).
\]
Any point $x \in L^\lambda(s)$ can be written as $x = \gamma(s) + u D^\lambda(s)$ for some $u \in \mathbb{C}$. Substituting $x - \gamma(s) = u (T + \lambda N)$ into the derivative condition yields
\[
u \kappa \det(T + \lambda N, \, N - \lambda T) = u \kappa \left( 1 + \lambda^2 \right) \det(T, N) = u \kappa (1 + \lambda^2).
\]
Solving $\frac{\partial \Phi^\lambda}{\partial s} = -\lambda + u \kappa (1+\lambda^2) = 0$ gives $u = \frac{\lambda}{\kappa(1+\lambda^2)}$, which recovers the explicit parametrization $x = E^\lambda(s)$.
Next, differentiating $\frac{\partial \Phi^\lambda}{\partial s}$ with respect to $s$ gives
\[
\frac{\partial^2 \Phi^\lambda}{\partial s^2} = \det\left( -T, (D^\lambda)' \right) + \det\left( x - \gamma, (D^\lambda)'' \right).
\]
Evaluating the first term yields $\det(-T, \kappa(N - \lambda T)) = -\kappa \det(T, N) = -\kappa$. We have
\[
(D^\lambda)'' = \kappa_s (N - \lambda T) + \kappa (N_s - \lambda T_s) = \kappa_s (N - \lambda T) - \kappa^2 D^\lambda.
\]
At $x = E^\lambda(s)$, we substitute $x - \gamma = \frac{\lambda}{\kappa(1+\lambda^2)} D^\lambda$, so we obtain
\[
\det\left( \frac{\lambda D^\lambda}{\kappa(1+\lambda^2)}, \, \kappa_s(N - \lambda T) - \kappa^2 D^\lambda \right) = \frac{\lambda \kappa_s}{\kappa(1+\lambda^2)} \det(D^\lambda, N - \lambda T) = \frac{\lambda \kappa_s}{\kappa}.
\]
Combining terms gives
\[
\frac{\partial^2 \Phi^\lambda}{\partial s^2}\Big|_{(s, E^\lambda(s))} = -\kappa + \frac{\lambda \kappa_s}{\kappa} = -\kappa \left( 1 - \frac{\lambda \kappa_s}{\kappa^2} \right) = -\kappa(1+\lambda^2) c^\lambda(s).
\]
Because $\kappa(s) \neq 0$ and $1+\lambda^2 \neq 0$, so  $\frac{\partial^2 \Phi^\lambda}{\partial s^2} = 0$ if and only if, $c^\lambda(s) = 0$, establishing that singular points of $E^\lambda$ correspond precisely to points where $\Phi_\lambda$ has $A_k$ contact ($k \ge 2$) with the line family.
\end{proof}

An alternative geometric approach utilizes the family of holomorphic height functions $H : U \times \mathbb{S}^1_{\mathbb{C}} \to \mathbb{C}$ defined by
\[
H(s, v) = \langle \gamma(s), v \rangle,
\]
where $v \in \mathbb{C}^2$ is a complex unit vector ($\langle v, v \rangle = 1$). The singularity order of $H(s, v)$ with respect to $s$ tracks the contact order between the curve $\gamma$ and the family of hyperplanes orthogonal to $v$.

%%%%%%%%%%%%%%%%%%%%%%%%%%%%%5

\begin{theorem}[\cite{FalquetoTari2026}]
\label{thm:contactheight}
Let $v \in \mathbb{C}^2$ be a non-isotropic direction vector ($\langle v, v \rangle \neq 0$) and $s_0$ a point on the complex curve $\gamma$. Higher-order contact between $\gamma$ and the complex affine line orthogonal to $v$ is characterized by the successive vanishing of derivatives of the holomorphic height function $H(s, v) = \langle \gamma(s), v \rangle$.

Specifically, the conditions
\[
H_s(s_0, v) = 0, \qquad H_{ss}(s_0, v) = 0, \qquad \dots, \qquad H^{(k)}(s_0, v) = 0
\]
define the condition for $A_k$ contact between the curve $\gamma$ and the line $\langle x - \gamma(s_0), v \rangle = 0$.
\end{theorem}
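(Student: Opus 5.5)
We fix a non-isotropic direction and write $v = \cos\phi\, T(s_0) + \sin\phi\, N(s_0)$ with $\langle v,v\rangle = 1$ after normalisation. The plan is to differentiate $H(s,v) = \langle \gamma(s), v\rangle$ repeatedly in the complex arclength $s$, using only the Frenet formulas of Theorem~\ref{thm:frenet}. The goal is to express each $H^{(j)}$ as $\langle \gamma^{(j)}(s), v\rangle$, where $\gamma^{(j)}$ is written in the frame $\{T,N\}$ with polynomial coefficients in $\kappa,\kappa_s,\dots$. Concretely:
\[
H_s = \langle T, v\rangle, \qquad H_{ss} = \kappa\langle N, v\rangle, \qquad H_{sss} = \kappa_s\langle N,v\rangle - \kappa^2\langle T,v\rangle ,
\]
and inductively $\gamma^{(j)} = \alpha_j T + \beta_j N$ with $\alpha_{j+1} = (\alpha_j)_s - \kappa\beta_j$ and $\beta_{j+1} = (\beta_j)_s + \kappa\alpha_j$.

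Next we compare with contact. The line $\ell = \{x : \langle x - \gamma(s_0), v\rangle = 0\}$ is the zero set of the affine function $g(x) = \langle x-\gamma(s_0), v\rangle$. Since $\langle v,v\rangle \neq 0$, the vector $v$ is not in $\ell$'s direction space; in fact $v^\perp$ is a complement, and $g$ is a submersion. Hence the contact order between $\gamma$ and $\ell$ is by definition the order of vanishing of $g\circ\gamma(s) = H(s,v) - H(s_0,v)$ at $s_0$.

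The key observation is then the following. $A_k$ contact means that $g\circ\gamma$ has an $A_k$ singularity at $s_0$, i.e.\ it is $\mathcal{R}$-equivalent to $\pm u^{k+1}$. For a one-variable holomorphic germ this is equivalent to the vanishing of $H_s,\dots,H^{(k)}$ at $s_0$ together with $H^{(k+1)}(s_0) \neq 0$. This is the standard one-variable Morse-type lemma: write $H - H(s_0) = (s-s_0)^{k+1}w(s)$ with $w(s_0)\neq 0$ and take a $(k+1)$-st root of $w$. This gives the characterization stated in the theorem, with the non-vanishing of the next derivative understood as the exact-order condition.

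To connect this with the geometry of the previous sections, we read off the first conditions. $H_s(s_0)=0$ says $v \perp T(s_0)$, so $v = \pm N(s_0)$ and $\ell$ is the tangent line. $H_{ss}(s_0)=0$ then forces $\kappa(s_0)=0$, an inflection. $H_{sss}(s_0)=0$ forces $\kappa_s(s_0)=0$ as well, matching Definition~\ref{def:inf+vert}.

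The main obstacle is to justify that the choice of parametrisation does not matter, and to handle the hypothesis carefully: one must ensure $s_0$ is non-isotropic so that $s$ exists. Otherwise we work with $t$ and note that the order of vanishing of $g\circ\gamma$ is invariant under holomorphic reparametrisation. We will argue this invariance directly from the definition of order of vanishing, so that the derivative conditions in $t$ and in $s$ are equivalent.
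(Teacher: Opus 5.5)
Your proposal is correct and follows essentially the same route as the paper. Both identify the contact order with the order of vanishing of $f(s)=\langle \gamma(s)-\gamma(s_0),v\rangle = H(s,v)-H(s_0,v)$, use that a one-variable germ is $\mathcal{R}$-equivalent to $u^{k+1}$ exactly when $f'(s_0)=\dots=f^{(k)}(s_0)=0\neq f^{(k+1)}(s_0)$, and conclude from $f^{(j)}=H^{(j)}$ for $j\ge 1$. You justify that middle step by extracting a $(k+1)$-st root of the unit $w$, whereas the paper takes it as the definition. The Frenet expansions and the tangent-line/inflection readings are correct but not needed for the proof, and your reparametrisation-invariance remark (letting you work in $t$ when $s_0$ is isotropic and no arclength $s$ exists) is a small refinement the paper leaves implicit.
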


\begin{proof}
The affine line passing through a point $p = \gamma(s_0)$ orthogonal to $v$ is defined by the linear equation $\langle x - p, v \rangle = 0$. The contact order between the curve $\gamma(s)$ and this line at $s = s_0$ is measured by the order of vanishing of the scalar function
\[
f(s) = \langle \gamma(s) - \gamma(s_0), v \rangle = H(s, v) - \langle \gamma(s_0), v \rangle.
\]
By definition, a scalar function germ $f: (\mathbb{C}, s_0) \to (\mathbb{C}, 0)$ has an $A_k$ singularity ($k \ge 1$) if it is right-equivalent to $s^{k+1}$. This occurs if and only if
\[
f(s_0) = f'(s_0) = f''(s_0) = \dots = f^{(k)}(s_0) = 0 \qquad \text{and} \qquad f^{(k+1)}(s_0) \neq 0.
\]
Because $f^{(j)}(s) = H^{(j)}(s, v)$ for all $j \ge 1$, the conditions $H_s(s_0, v) = \dots = H^{(k)}(s_0, v) = 0$ with $H^{(k+1)}(s_0, v) \neq 0$ precisely define $A_k$ contact.
\end{proof}

\begin{theorem}[Higher cusp criterion]
\label{thm:highercusp}
Let $s_0$ be a singular point of the evolutoid $E^\lambda$ satisfying $q(s_0) \neq 0$, $\kappa(s_0) \neq 0$, and $\lambda^2 \neq -1$. Define the vanishing order
\[
m = \ord_{s_0} \left( 1 - \lambda \frac{\kappa_s}{\kappa^2} \right).
\]
Then the germ of $E^\lambda$ at $s_0$ has multiplicity $m+1$ and is locally parametrized by
\[
u \longmapsto \left( u^{m+1}, \, u^{m+2} + O(u^{m+3}) \right).
\]
For $m=1$, this yields the ordinary $A_2$ cusp $u \mapsto (u^2, u^3)$. For $m \ge 2$, it yields a higher-order cusp of multiplicity $m+1$.
\end{theorem}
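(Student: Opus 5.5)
The plan is to reduce the statement to Theorem~\ref{thm:localnormal}, since the quantity $1-\lambda\kappa_s/\kappa^2$ differs from $c^\lambda$ only by the non-zero constant factor $(1+\lambda^2)^{-1}$. Because $\lambda^2\neq -1$, we have $\ord_{s_0}(1-\lambda\kappa_s/\kappa^2)=\ord_{s_0}(c^\lambda)=m$. Because $s_0$ is a singular point, Theorem~\ref{thm:singularity} gives $c^\lambda(s_0)=0$, so $m\ge 1$. The hypotheses $q(s_0)\neq0$ and $\kappa(s_0)\neq0$ guarantee two things: the arclength $s$ is a legitimate holomorphic coordinate near $s_0$ (Theorem~\ref{thm:frenet}), and $c^\lambda$ is holomorphic there. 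Hence the order $m$ is well defined and finite, provided $c^\lambda\not\equiv 0$. The identically vanishing case must be excluded, either as implicit in the hypothesis that $m$ is defined, or by noting that otherwise $E^\lambda$ is constant.

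Next I will redo the core computation of Theorem~\ref{thm:localnormal}, keeping track of leading coefficients. Write $c^\lambda(s)=a s^m+b s^{m+1}+O(s^{m+2})$ with $a\neq0$, and $D^\lambda(s)=\xi_0+\xi_1 s+O(s^2)$, where $\xi_0=T_0+\lambda N_0$ and $\xi_1=\kappa_0(N_0-\lambda T_0)$. Integrating $E^\lambda_s=c^\lambda D^\lambda$ term by term gives the expansions of $X$ and $Y$ in the basis $\{\xi_0,\xi_1\}$. This basis is legitimate because $\det(\xi_0,\xi_1)=\kappa_0(1+\lambda^2)\neq0$. The expansions have leading terms $\frac{a}{m+1}s^{m+1}$ and $\frac{a}{m+2}s^{m+2}$ respectively.

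The multiplicity claim follows from the first expansion. The lowest order term of $E^\lambda(s)-E^\lambda(s_0)$ is $\frac{a}{m+1}\xi_0 s^{m+1}\neq 0$, while all derivatives of order $1,\dots,m$ vanish at $s_0$, so the multiplicity of the germ is exactly $m+1$.

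For the normal form, take an $(m+1)$-th root, $u=s\,(\tfrac{a}{m+1})^{1/(m+1)}(1+O(s))^{1/(m+1)}$. This is a local biholomorphism because the bracket is a non-vanishing holomorphic function, so a holomorphic branch of its root exists. Substituting $s=s(u)$ into $Y$ gives $C_0u^{m+2}+O(u^{m+3})$ with $C_0=\frac{a}{m+2}\big(\tfrac{m+1}{a}\big)^{(m+2)/(m+1)}\neq0$. Rescaling the second target coordinate then yields the stated form.

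For $m=1$, the conclusion is Corollary~\ref{thm:A2}: the triangular change $(x,y)\mapsto(x,y-p(x))$ removes the higher-order terms. Those terms are a series in $u$ whose even powers are functions of $x=u^2$, and whose odd powers of order at least $5$ can be absorbed by reparametrization or finite determinacy of the $A_2$ cusp.

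The main obstacle is justifying that the $u^{m+2}$ coefficient really survives the reparametrization. The substitution $s=s(u)$ could in principle feed $X$-terms into $Y$, but it does not: $Y$ itself begins at order $m+2$, and the correction in $s(u)$ is $O(u^2)$, so it only affects orders $\ge m+3$. I will check this explicitly. I will also not claim any finer normal form for $m\ge2$, since the remainder $O(u^{m+3})$ is in general not removable there.
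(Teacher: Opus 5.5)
Your proposal is correct and follows the paper's route. The paper's proof consists of observing that $\ord_{s_0}(c^\lambda)=\ord_{s_0}(1-\lambda\kappa_s/\kappa^2)$ because $1+\lambda^2\neq0$, and then invoking Theorem~\ref{thm:localnormal} with $\det(\xi_0,\xi_1)=\kappa(s_0)(1+\lambda^2)\neq0$; your explicit expansions, the constant $C_0$, and the exclusion of $c^\lambda\equiv0$ simply unpack that theorem. One small correction for $m=1$: once $x=u^2$ is fixed, the odd terms $u^{2k+1}=u^3x^{k-1}$ cannot be removed by reparametrization, but they can be removed by the target change $y\mapsto y/g(x)$ with $g(0)=1$, or by finite determinacy.
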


\begin{proof}
Since $\lambda^2 \neq -1$, the denominator $1+\lambda^2$ is a non-zero constant. Thus,
\[
\ord_{s_0}(c^\lambda) = \ord_{s_0}\left( \frac{1 - \lambda \kappa_s / \kappa^2}{1 + \lambda^2} \right) = \ord_{s_0}\left( 1 - \lambda \frac{\kappa_s}{\kappa^2} \right) = m.
\]
By Theorem~\ref{thm:localnormal}, whenever $\ord_{s_0}(c^\lambda) = m \ge 1$ and $\det(D_0, D_1) = \kappa(s_0)(1+\lambda^2) \neq 0$, there exist local coordinate charts in source and target transforming $E^\lambda$ to $u \mapsto (u^{m+1}, u^{m+2} + O(u^{m+3}))$.
\end{proof}

\subsection{Interaction with Inflections and Isotropic Points}

\begin{proposition}
\label{thm:inflectionnosing}
Suppose $q(t_0) \neq 0$ and $t_0$ is a generic inflection point ($\kappa(t_0) = 0$). For any non-zero parameter $\lambda$ with $\lambda^2 \neq -1$, $E^\lambda$ does not extend holomorphically as a finite curve in $\mathbb{C}^2$. Instead, $E^\lambda$ has a pole at $t_0$ of order equal to $\ord_{t_0}(\kappa)$.

Consequently, an ordinary inflection point does not produce a finite singularity of $E^\lambda$; rather, the evolutoid escapes to the line at infinity $\ell^\infty \subset \mathbb{CP}^2$.
\end{proposition}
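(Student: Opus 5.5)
The plan is to reduce everything to Theorem~\ref{thm:pole}, using the square-root-free expression \eqref{eq:squarefree} as a cross-check so that no branch of $\sqrt{q}$ causes trouble. Since $q(t_0)\neq 0$, Theorem~\ref{thm:frenet} gives a holomorphic branch $\rho=\sqrt{q}$ and a holomorphic frame $\{T,N\}$ on a neighbourhood $U$ of $t_0$. By Proposition~\ref{thm:inflectionvertex}, the zero of $\kappa=\Delta/q^{3/2}$ at $t_0$ has order $m:=\ord_{t_0}(\kappa)=\ord_{t_0}(\Delta)\ge 1$. Because $\Delta\not\equiv 0$ on $U$ (otherwise the statement is void), this zero is isolated, so on a punctured disc $U^*$ the formula \eqref{eq:Elambda} defines $E^\lambda$ holomorphically.

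Next I rewrite $E^\lambda$ on $U^*$ via \eqref{eq:squarefree} as
\[
E^\lambda(t)=\gamma(t)+\frac{\lambda\,q(t)}{\Delta(t)(1+\lambda^2)}\bigl(\gamma'(t)+\lambda J\gamma'(t)\bigr).
\]
The factor $q/\Delta$ is meromorphic with a pole of exact order $m$ at $t_0$, since $q(t_0)\neq0$. The vector $w(t)=\gamma'(t)+\lambda J\gamma'(t)$ is holomorphic. Its value $w(t_0)=(I+\lambda J)\gamma'(t_0)$ is nonzero, because $\det(I+\lambda J)=1+\lambda^2\neq 0$ and $\gamma$ is regular. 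Since $\lambda\neq0$, the product is a meromorphic vector with a pole of exact order $m$ and leading coefficient
\[
\frac{\lambda\, q(t_0)}{(1+\lambda^2)\,\Delta^{(m)}(t_0)/m!}\,w(t_0)\neq 0 .
\]
Adding the holomorphic term $\gamma$ does not change the principal part. This reproduces the Laurent expansion of Theorem~\ref{thm:pole}, where $w(t_0)/\rho(t_0)=T(t_0)+\lambda N(t_0)$.

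The conclusion then follows from the pole. Because $|E^\lambda(t)|\to\infty$ as $t\to t_0$, $E^\lambda$ cannot extend to any finite holomorphic curve germ in $\mathbb{C}^2$ at $t_0$, which rules out a finite singular point there. To describe the escape to $\ell^\infty$, I pass to homogeneous coordinates: multiplying $[E^\lambda(t):1]$ by $(t-t_0)^m$ gives a holomorphic map to $\mathbb{CP}^2$ whose value at $t_0$ is $[w(t_0):0]\in\ell^\infty$. So the closure of $E^\lambda$ meets the line at infinity at the point given by the direction $T(t_0)+\lambda N(t_0)$. This point is not a cyclic point, since $\langle w(t_0),w(t_0)\rangle=(1+\lambda^2)q(t_0)\neq0$.

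The step I expect to need the most care is the exactness of the pole order: one must check that no cancellation occurs in the leading term. That is exactly the non-vanishing of $T(t_0)+\lambda N(t_0)$, equivalently of $\det(I+\lambda J)=1+\lambda^2$. I will handle it directly through this determinant rather than through the eigenvalue argument used in Theorem~\ref{thm:pole}. I will also note that the hypotheses $\lambda\neq0$ and $\lambda^2\neq-1$ are each used exactly once in the proof.
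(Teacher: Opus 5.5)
Your proof is correct and follows essentially the paper's route: a scalar factor with a pole of exact order $m=\ord_{t_0}(\kappa)$ multiplies a holomorphic vector that is non-zero at $t_0$, and the holomorphic term $\gamma$ is added (you take the factor to be $q/\Delta$ from \eqref{eq:squarefree}, where the paper uses $1/\kappa$). Your determinant check $\det(I+\lambda J)=1+\lambda^2\neq 0$ and the explicit limit point $[\gamma'(t_0)+\lambda J\gamma'(t_0):0]\in\ell^\infty$ spell out what the paper's short proof leaves implicit, since it merely asserts $T(t_0)+\lambda N(t_0)\neq\mathbf{0}$ and records only the bound $\|E^\lambda(t)\|=O((t-t_0)^{-m})$; one small correction is that $\lambda^2\neq-1$ is used more than once (for the denominator $1+\lambda^2$, for $w(t_0)\neq\mathbf{0}$, and in the non-cyclic check), not exactly once.
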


\begin{proof}
Recall the explicit parametrization of the evolutoid:
\[
E^\lambda(t) = \gamma(t) + \frac{\lambda}{1+\lambda^2} \frac{T(t) + \lambda N(t)}{\kappa(t)}.
\]
Because $\gamma(t_0)$ is finite and $T(t_0) + \lambda N(t_0) \neq \mathbf{0}$ when $\lambda^2 \neq -1$, the term $\frac{T(t) + \lambda N(t)}{\kappa(t)}$ diverges as $t \to t_0$. If $\ord_{t_0}(\kappa) = m \ge 1$, then $\|E^\lambda(t)\| = O((t-t_0)^{-m})$, establishing a pole of order $m$.
\end{proof}

\begin{proposition}[Regularity at generic isotropic points]
\label{thm:isotropicregular}
Suppose $q(t_0) = 0$ and $\Delta(t_0) \neq 0$, so $t_0$ is a generic isotropic point of $\gamma$. For any finite non-isotropic parameter $\lambda^2 \neq -1$, $E^\lambda$ extends holomorphically through $t_0$ and is regular there:
\(
(E^\lambda)'(t_0) \neq \mathbf{0}.
\)
Furthermore, $E^\lambda(t_0) = \gamma(t_0)$ and the tangent line $T_{t_0}E^\lambda$ coincides with the curve tangent $T_{t_0}\gamma$.
\end{proposition}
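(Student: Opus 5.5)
The proposition is essentially a repackaging of Theorems~\ref{thm:isotropicextension} and~\ref{thm:isotropictangent}, so the plan is to work throughout with the square-root-free expression \eqref{eq:squarefree} rather than the arc-length formula \eqref{eq:Elambda}. At an isotropic point the frame $\{T,N\}$ and the arc-length $s$ are undefined, since $q(t_0)=0$, while the rational expression
\[
E^\lambda = \gamma + A\,(\gamma' + \lambda J\gamma'), \qquad A = \frac{\lambda q}{\Delta(1+\lambda^2)},
\]
involves only $\gamma,\gamma',\gamma''$ and the denominators $\Delta$ and $1+\lambda^2$.

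\textbf{Step 1 (extension).} Since $\Delta(t_0)\neq0$, $\Delta$ is non-vanishing on a neighbourhood of $t_0$. The constant $1+\lambda^2$ is nonzero by hypothesis. Hence $A$ and $E^\lambda$ are holomorphic near $t_0$. On the punctured neighbourhood where $q\neq0$, $E^\lambda$ agrees with the envelope \eqref{eq:Elambda}, because there $\kappa=\Delta/q^{3/2}\neq0$. This is exactly Theorem~\ref{thm:isotropicextension}. Evaluating at $t_0$ and using $q(t_0)=0$ gives $A(t_0)=0$, so $E^\lambda(t_0)=\gamma(t_0)$.

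\textbf{Step 2 (derivative).} Differentiating gives $(E^\lambda)'=\gamma'+A'(\gamma'+\lambda J\gamma')+A(\gamma''+\lambda J\gamma'')$, and the last term vanishes at $t_0$. Because $\gamma$ is regular and $q(t_0)=0$, Lemma~\ref{lem:isotropic} gives $\gamma'(t_0)=a(1,\varepsilon i)$ with $J\gamma'(t_0)=-\varepsilon i\,\gamma'(t_0)$. Lemma~\ref{lem:qprime} gives $q'(t_0)=2\varepsilon i\Delta(t_0)$, and hence $A'(t_0)=2\varepsilon i\lambda/(1+\lambda^2)$.

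Substituting, $(E^\lambda)'(t_0)$ is a scalar multiple of $\gamma'(t_0)$, and the scalar is a ratio of factors $1\pm i\lambda$. This is the computation of Theorem~\ref{thm:isotropictangent}.

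\textbf{Step 3 (conclusion).} Since $\lambda^2\neq-1$, neither factor $1\pm i\lambda$ vanishes, so the scalar is nonzero. Together with $\gamma'(t_0)\neq0$, this gives $(E^\lambda)'(t_0)\neq\mathbf 0$. It also shows $(E^\lambda)'(t_0)$ is parallel to $\gamma'(t_0)$, so the tangent lines of $E^\lambda$ and $\gamma$ at the common point $\gamma(t_0)$ coincide.

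The only delicate point is the sign bookkeeping for $\varepsilon$ in Step 2. Lemma~\ref{lem:isotropic} gives eigenvalue $-\varepsilon i$, whereas Theorem~\ref{thm:isotropictangent} is phrased with $J\gamma'(t_0)=\varepsilon i\gamma'(t_0)$. I would redo the short computation with a single consistent convention. With $\mu$ the eigenvalue of $J$ on $\gamma'(t_0)$, one has $q'(t_0)=-2\mu\Delta(t_0)$, and the scalar becomes
\[
1-\frac{2\mu\lambda(1+\mu\lambda)}{1+\lambda^2}=\frac{1-\mu\lambda}{1+\mu\lambda},
\]
which is nonzero and finite whenever $\lambda^2\neq-1$ because $\mu=\pm i$. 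Regularity and tangency therefore hold under either sign convention, and the argument does not depend on which one is used.
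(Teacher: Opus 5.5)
\textbf{There is a genuine gap in Step~3, and it cannot be repaired: the statement is false for one parameter value.} Working with \eqref{eq:squarefree} is the right setting. You also correctly spotted that Theorem~\ref{thm:isotropictangent} mixes two conventions. It takes $q'(t_0)=2\varepsilon i\Delta(t_0)$ from Lemma~\ref{lem:qprime}, which presumes $\gamma'(t_0)=a(1,\varepsilon i)$ and hence $J\gamma'(t_0)=-\varepsilon i\,\gamma'(t_0)$. It then uses $J\gamma'(t_0)=+\varepsilon i\,\gamma'(t_0)$.

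However, your corrected expression is simplified wrongly. Because $\mu^2=-1$, the term $-2\mu^2\lambda^2$ equals $+2\lambda^2$, so
\[
1-\frac{2\mu\lambda(1+\mu\lambda)}{1+\lambda^2}=\frac{1-2\mu\lambda+3\lambda^2}{1+\lambda^2}=\frac{(1-3\mu\lambda)(1+\mu\lambda)}{(1-\mu\lambda)(1+\mu\lambda)}=\frac{1-3\mu\lambda}{1-\mu\lambda},
\]
not $(1-\mu\lambda)/(1+\mu\lambda)$. This vanishes at $\lambda=1/(3\mu)=-\mu/3$, which is non-isotropic since $\lambda^2=-1/9$.

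An independent check gives the same answer. The ratio $\kappa_s/\kappa^2=\Delta' q/\Delta^2-3q'/(2\Delta)$ contains no square roots and is holomorphic through $t_0$, with value $3\mu$ there. Hence $(E^\lambda)'=c^\lambda(\gamma'+\lambda J\gamma')$ with $c^\lambda(t_0)=(1-3\mu\lambda)/(1+\lambda^2)$. So $t_0$ is a limiting $\lambda$-vertex for exactly this $\lambda$.

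The paper's one-line proof simply asserts $c^\lambda(t_0)D^\lambda(t_0)\neq\mathbf 0$. Theorem~\ref{thm:isotropictangent} gets a non-vanishing factor only through the sign slip you flagged.

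Here is a concrete counterexample. Take $\gamma(t)=(t,\,it+t^2/2)$. Then $q=2it+t^2$, $\Delta\equiv 1$, $\gamma'(0)=(1,i)$ and $\mu=-i$. For $\lambda=i/3$, formula \eqref{eq:squarefree} gives exactly
\[
E^{i/3}(t)=\Bigl(\tfrac{3i}{4}t^2+\tfrac{1}{8}t^3,\ -\tfrac{3}{4}t^2+\tfrac{3i}{8}t^3\Bigr),
\]
which is an ordinary cusp at $t_0=0$.

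Once the algebra is fixed, your argument proves the following. For every $\lambda^2\neq-1$, $E^\lambda$ extends holomorphically with $E^\lambda(t_0)=\gamma(t_0)$, and $(E^\lambda)'(t_0)$ is parallel to $\gamma'(t_0)$. It is regular (hence tangent to $\gamma$) at $t_0$ if and only if $3\mu\lambda\neq 1$. In the convention $\gamma'(t_0)=a(1,\varepsilon i)$, this condition reads $\lambda\neq\varepsilon i/3$.
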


\begin{proof}
Near an isotropic point $q(t_0) = 0$, the frame vector $D^\lambda(t)$ and curvature $\kappa(t)$ undergo a removable singularity upon rescaling, ensuring that $E^\lambda(t)$ extends holomorphically with $E^\lambda(t_0) = \gamma(t_0)$. Differentiating $E^\lambda(t)$ yields $(E^\lambda)'(t_0) = c^\lambda(t_0) D^\lambda(t_0) \neq \mathbf{0}$. Since $D^\lambda(t_0)$ aligns with the isotropic tangent direction $T(t_0)$, the evolutoid remains non-singular and tangent to $\gamma$ at $t_0$.
\end{proof}
%%%%%%%%%%%%%%%%%%%%%%%%%
\begin{definition}[Isotropic multiplicity]
An isotropic point $t_0$ of a complex curve $\gamma$ is said to have \emph{isotropic multiplicity} $r \ge 1$ if the metric function $q(t) = \langle \gamma'(t), \gamma'(t) \rangle$ vanishes at $t_0$ to order $r$:
\[
q(t) = a~(t-t_0)^r + O((t-t_0)^{r+1}), \qquad a \neq 0.
\]
\end{definition}
Suppose $q(t_0) = 0$, $\Delta(t_0) \neq 0$, and $\ord_{t_0}(q) = r \ge 1$. Then for any non-isotropic parameter $\lambda^2 \neq -1$, the square-root-free formula
\[
E^\lambda = \gamma + \frac{\lambda q}{\Delta(1+\lambda^2)} (\gamma' + \lambda J\gamma')
\]
is holomorphic at $t_0$ and satisfies
\(
E^\lambda(t) - \gamma(t) = O((t-t_0)^r).
\)
Consequently, the order of contact between the evolutoid $E^\lambda$ and the underlying curve $\gamma$ at an isotropic point of multiplicity $r$ is at least $r$.

 In fact, since $\Delta(t_0) \neq 0$ and $\lambda^2 \neq -1$, the denominator term $\Delta(t)(1+\lambda^2)$ is non-zero at $t_0$. The vector-valued function
\[
V^\lambda(t) = \frac{\lambda}{\Delta(t)(1+\lambda^2)} \big(\gamma'(t) + \lambda J\gamma'(t)\big)
\]
is therefore holomorphic in a neighborhood of $t_0$. Since $q(t) = O((t-t_0)^r)$, we obtain
\[
E^\lambda(t) - \gamma(t) = q(t) V^\lambda(t) = O((t-t_0)^r).
\]
In particular, $E^\lambda(t_0) = \gamma(t_0)$, and all derivatives $(E^\lambda)^{(k)}(t_0)$ match $\gamma^{(k)}(t_0)$ for $k = 0, 1, \dots, r-1$. Hence, the geometric order of contact at $t_0$ is at least $r$.

\subsection{Isotropic inflection points}

Points where the metric function $q(t)$ and the Wronskian determinant $\Delta(t)$ vanish simultaneously represent the most delicate local geometry in complex curve theory.

\begin{definition}
A point $t_0$ on a complex curve $\gamma$ is called an \emph{isotropic inflection} if
\[
q(t_0) = 0 \qquad \text{and} \qquad \Delta(t_0) = 0.
\]
\end{definition}

\begin{proposition}
\label{prop:isotropicinflection}
At an isotropic inflection $t_0$, the behavior of the evolutoid $E^\lambda$ is governed by the relative vanishing orders $\ord_{t_0}(q)$ and $\ord_{t_0}(\Delta)$. Specifically:
\begin{enumerate}
    \item Removable singularity ($\ord_{t_0}(q) > \ord_{t_0}(\Delta)$): The ratio $\frac{q}{\Delta}$ vanishes at $t_0$, so $E^\lambda$ extends holomorphically through $t_0$ with $E^\lambda(t_0) = \gamma(t_0)$.
    \item Finite displaced touchdown ($\ord_{t_0}(q) = \ord_{t_0}(\Delta)$): The ratio $\frac{q}{\Delta}$ tends to a non-zero constant $c \in \mathbb{C}^\times$, so $E^\lambda$ extends holomorphically with $E^\lambda(t_0) \neq \gamma(t_0)$.
    \item Pole ($\ord_{t_0}(q) < \ord_{t_0}(\Delta)$): The ratio $\frac{q}{\Delta}$ diverges as $t \to t_0$, causing $E^\lambda$ to have a pole of order $\ord_{t_0}(\Delta) - \ord_{t_0}(q)$.
\end{enumerate}
\end{proposition}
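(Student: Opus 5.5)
Work throughout with the square-root-free expression \eqref{eq:squarefree}. It is valid wherever $\Delta\neq 0$, so it is valid on a punctured neighbourhood of $t_0$ because zeros of $\Delta$ are isolated. Write
\[
E^\lambda(t)=\gamma(t)+\frac{q(t)}{\Delta(t)}\,W^\lambda(t),\qquad W^\lambda=\frac{\lambda}{1+\lambda^2}\big(\gamma'+\lambda J\gamma'\big).
\]
Here $W^\lambda$ is holomorphic at $t_0$. Set $r=\ord_{t_0}(q)$ and $d=\ord_{t_0}(\Delta)$, both at least $1$ at an isotropic inflection. Factor $q=(t-t_0)^r\tilde q$ and $\Delta=(t-t_0)^d\tilde\Delta$ with $\tilde q(t_0),\tilde\Delta(t_0)\neq0$. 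Then
\[
\frac{q}{\Delta}=(t-t_0)^{r-d}\,g(t),\qquad g=\tilde q/\tilde\Delta,
\]
with $g$ holomorphic and $g(t_0)\neq 0$. The three cases are read off from the sign of $r-d$.

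In case (1), $r>d$, so $q/\Delta$ is holomorphic and vanishes at $t_0$. Hence $E^\lambda$ extends holomorphically with $E^\lambda(t_0)=\gamma(t_0)$, exactly as in Theorem~\ref{thm:isotropicextension}.

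In case (2), $r=d$, so $q/\Delta\to c=g(t_0)\neq0$ and $E^\lambda$ extends holomorphically with $E^\lambda(t_0)=\gamma(t_0)+c\,W^\lambda(t_0)$. To conclude $E^\lambda(t_0)\neq\gamma(t_0)$ we need $W^\lambda(t_0)\neq0$. By Lemma~\ref{lem:isotropic}, $\gamma'(t_0)=a(1,\varepsilon i)$ with $J\gamma'(t_0)=-\varepsilon i\,\gamma'(t_0)$, so $W^\lambda(t_0)=\frac{\lambda(1-\varepsilon i\lambda)}{1+\lambda^2}\gamma'(t_0)$. This is nonzero provided $\lambda\neq0$, using $\lambda^2\neq-1$ and regularity of $\gamma$.

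In case (3), $r<d$. Let $k=d-r$. Expanding $W^\lambda(t)=W^\lambda(t_0)+O(t-t_0)$ gives the Laurent expansion
\[
E^\lambda(t)=\frac{g(t_0)W^\lambda(t_0)}{(t-t_0)^{k}}+O\big((t-t_0)^{-k+1}\big),
\]
with $\gamma$ absorbed into the remainder as in the proof of Theorem~\ref{thm:pole}. Nonvanishing of the leading coefficient again reduces to $W^\lambda(t_0)\neq0$, so the pole has exact order $d-r$.

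The main obstacle is the nondegeneracy of the leading vector $W^\lambda(t_0)$ in cases (2) and (3), together with the implicit hypothesis $\lambda\neq0$. When $\lambda=0$ we have $E^0=\gamma$ and neither case (2) nor case (3) can hold, so I will state that $\lambda\neq0$ is assumed, as in Theorem~\ref{thm:pole}. There is a sign subtlety with the eigenvalue: Lemma~\ref{lem:isotropic} gives $J\gamma'(t_0)=-\varepsilon i\gamma'(t_0)$, while Theorem~\ref{thm:isotropictangent} writes $+\varepsilon i$. Either convention yields a factor $1\pm\varepsilon i\lambda$, and this factor is nonzero because $\lambda^2\neq-1$, so the conclusion is unaffected. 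Finally, the argument uses that $\gamma$ is regular at $t_0$, so $\gamma'(t_0)\neq0$; this is part of the standing hypothesis.
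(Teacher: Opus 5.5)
Your proposal is correct and follows essentially the same route as the paper: you factor $q/\Delta=(t-t_0)^{r-d}g(t)$ in the square-root-free formula \eqref{eq:squarefree} and read off the three cases from the sign of $r-d$. Your version is also more careful than the paper's, since you justify $\gamma'(t_0)+\lambda J\gamma'(t_0)=(1-\varepsilon i\lambda)\gamma'(t_0)\neq0$ via Lemma~\ref{lem:isotropic} and make explicit the hypothesis $\lambda\neq0$, which the paper uses without stating it in cases (2) and (3). As an aside, setting $\alpha=z_1'+iz_2'$ and $\beta=z_1'-iz_2'$ gives $q=\alpha\beta$ and $\Delta=\frac{1}{2i}(\alpha'\beta-\alpha\beta')$; at an isotropic point of a regular curve exactly one of $\alpha,\beta$ vanishes, so $\ord_{t_0}(\Delta)=\ord_{t_0}(q)-1$ and only case (1) can actually occur.
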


\begin{proof}
Express the evolutoid in its square-root-free form:
\[
E^\lambda(t) = \gamma(t) + \frac{\lambda}{1+\lambda^2} \left( \frac{q(t)}{\Delta(t)} \right) \big(\gamma'(t) + \lambda J\gamma'(t)\big).
\]
Let $r = \ord_{t_0}(q) \ge 1$ and $k = \ord_{t_0}(\Delta) \ge 1$. Near $t_0$, we can write $q(t) = a(t-t_0)^r + O((t-t_0)^{r+1})$ and $\Delta(t) = b(t-t_0)^k + O((t-t_0)^{k+1})$ with $a, b \neq 0$. The ratio evaluates to
\[
\frac{q(t)}{\Delta(t)} = \frac{a}{b} (t-t_0)^{r-k} + O((t-t_0)^{r-k+1}).
\]
Because $\gamma'(t_0) + \lambda J\gamma'(t_0) \neq \mathbf{0}$ for $\lambda^2 \neq -1$:
\begin{itemize}
    \item If $r > k$, then $r-k \ge 1$, so $\frac{q(t)}{\Delta(t)} \to 0$ as $t \to t_0$. Thus $E^\lambda(t_0) = \gamma(t_0)$.
    \item If $r = k$, then $\frac{q(t)}{\Delta(t)} \to \frac{a}{b} \neq 0$, yielding a finite limit $E^\lambda(t_0) = \gamma(t_0) + \frac{\lambda a}{b(1+\lambda^2)} (\gamma'(t_0) + \lambda J\gamma'(t_0)) \neq \gamma(t_0)$.
    \item If $r < k$, then $r-k < 0$, giving a pole of exact order $k-r$.
\end{itemize}
This completes the classification of the local extension.
\end{proof}
%%%%%%%%%%%%%%%%%%%%%%%%%%%%%

%%%%%%%%%%%%%%%%%%%%%%%%%%%%%%%%%%%%
\section{Evolutoids as Wavefronts}\label{sec:wave}
For an angle parameter $\theta \in \mathbb{R}$, recall the rotated orthonormal frame $\{D_\theta, W_\theta\}$:
\begin{equation}\label{eq:DW}
D^\theta = \cos\theta \, T + \sin\theta \, N, \qquad W^\theta = \sin\theta \, T - \cos\theta \, N.
\end{equation}
The 1-parameter family of complex affine lines $L^\theta_{t}$ is given by the zero set of $F : \mathbb{C}^2 \times \mathbb{R} \times \mathbb{C} \to \mathbb{C}$:
\[
L^\theta_{t} = \left\{ x \in \mathbb{C}^2 : F(x, \theta, t) = 0 \right\},
\]
where
\[
F(x, \theta, t) = \left\langle x - \gamma(t), \, W^\theta(t) \right\rangle = \left\langle x - \gamma(t), \, \sin\theta \, T(t) - \cos\theta \, N(t) \right\rangle.
\]
The direction vector along $L^\theta_{t}$ is $D^\theta$. As $\theta$ varies, $F(x, \theta, t)$ interpolates holomorphically between the tangent line family ($\theta = 0$) and the normal line family ($\theta = \pi/2$).

To construct the associated family of holomorphic wavefronts via Legendrian singularity theory, we seek a generating potential $G(x, \theta, s)$ satisfying
\[
\frac{\partial G}{\partial s} = F(x, \theta, s)
\]
in the complex arc-length coordinate $s$. Expressing $F$ as $F = \sin\theta \, F_1 - \cos\theta \, F_2$, where
\[
F_1(x, s) = \langle x - \gamma(s), T(s) \rangle, \qquad F_2(x, s) = \langle x - \gamma(s), N(s) \rangle,
\]
we integrate each component with respect to $s$:
\[
\frac{\partial}{\partial s} \left[ -\frac{1}{2} \langle x - \gamma(s), x - \gamma(s) \rangle \right] = \langle x - \gamma(s), T(s) \rangle = F_1(x, s),
\]
and choose a local primitive $G_2(x, s)$ such that $\frac{\partial G_2}{\partial s} = F_2(x, s)$. The generating potential is therefore
\[
G(x, \theta, s) = -\frac{1}{2}\sin\theta \, \langle x - \gamma(s), x - \gamma(s) \rangle - \cos\theta \, G_2(x, s).
\]
The holomorphic wavefronts correspond to the intersection of the zero-level set of $F$ with the level sets of $G$:
\[
F(x, \theta, s) = 0, \qquad G(x, \theta, s) = c, \qquad c \in \mathbb{C}.
\]
This complexifies the wavefront construction of Giblin and Warder \cite{GiblinWarder2014}.

\subsection{Explicit integration and parametrization of wavefronts}
Let $w \in \mathbb{C}$ index the individual wavefront level sets. Any point $x(s) \in L^\theta_{s}$ can be represented as
\[
x(s) = \gamma(s) + u(s) D^\theta(s)
\]
for a scalar distance function $u(s)$. A curve $\Gamma^{\theta, w}(s)$ defined by such points represents an orthogonal wavefront if its velocity vector is orthogonal to the ray direction $D^\theta(s)$ in the complex bilinear metric:
\[
\left\langle \frac{d}{ds} \Gamma^{\theta, w}(s), \, D^\theta(s) \right\rangle = 0.
\]
Differentiating $\Gamma^{\theta, w}(s) = \gamma(s) + u(s) D^\theta(s)$ with respect to $s$ yields
\[
\Gamma^{\theta, w}_s = T(s) + u'(s) D^\theta(s) + u(s) D^\theta_s.
\]
Using $D^\theta_s = \kappa (\cos\theta \, N - \sin\theta \, T) = -\kappa W^\theta(s)$ and expanding $T(s) = \cos\theta \, D^\theta(s) + \sin\theta \, W^\theta(s)$, we obtain
\[
\Gamma^{\theta, w}_s = \big( \cos\theta + u'(s) \big) D^\theta(s) + \big( \sin\theta - \kappa(s) u(s) \big) W^\theta(s).
\]
The orthogonality condition $\langle \Gamma^{\theta, w}_s, D^\theta \rangle = 0$ reduces to the ordinary differential equation
\[
u'(s) + \cos\theta = 0 \implies u(s) = w - s \cos\theta,
\]
where $w \in \mathbb{C}$ is the constant of integration.

\begin{proposition}[Explicit wavefront parametrization]
For a fixed angle $\theta \in \mathbb{R}$ and wavefront parameter $w \in \mathbb{C}$, the holomorphic wavefront curve $\Gamma^{\theta, w}$ is given by
\[
\Gamma^{\theta, w}(s) = \gamma(s) + \big( w - s \cos\theta \big) D^\theta(s) = \gamma(s) + \big( w - s \cos\theta \big) \big( \cos\theta \, T(s) + \sin\theta \, N(s) \big).
\]
For $\theta = \pi/2$, this reduces to $\Gamma^{\pi/2, w}(s) = \gamma(s) + w N(s)$, which is the standard family of complex parallel curves.
\end{proposition}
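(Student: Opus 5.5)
The plan is a direct verification in the complex arc-length coordinate $s$, which exists near any non-isotropic point by Theorem~\ref{thm:frenet}. The derivation preceding the statement already shows that the wavefront ansatz $\Gamma(s)=\gamma(s)+u(s)D^\theta(s)$ forces $u'(s)=-\cos\theta$. What remains is to confirm that the resulting curve has the required properties: it lies on the rays, it meets them orthogonally, and it is holomorphic. Then I would specialise to $\theta=\pi/2$.

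\textbf{Step 1 (orthonormal frame and derivative of the ray).} First I would check that $\{D^\theta,W^\theta\}$ is orthonormal for the bilinear form. From $\langle T,T\rangle=\langle N,N\rangle=1$ and $\langle T,N\rangle=0$ we get $\langle D^\theta,D^\theta\rangle=\cos^2\theta+\sin^2\theta=1$. The same computation gives $\langle W^\theta,W^\theta\rangle=1$ and $\langle D^\theta,W^\theta\rangle=0$. These identities hold for complex $\theta$ as well, which is useful since the statement is phrased for $\theta\in\mathbb R$ but the paper later lets $\theta\in\mathbb C$. Next, the Frenet formulas give $D^\theta_s=\kappa(\cos\theta\,N-\sin\theta\,T)=-\kappa W^\theta$. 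Inverting the frame relation gives $T=\cos\theta\,D^\theta+\sin\theta\,W^\theta$.

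\textbf{Step 2 (derivative of the candidate wavefront).} Set $u(s)=w-s\cos\theta$ and differentiate $\Gamma^{\theta,w}=\gamma+uD^\theta$. This gives
\[
\Gamma^{\theta,w}_s=(\cos\theta+u_s)D^\theta+(\sin\theta-\kappa u)W^\theta=(\sin\theta-\kappa(w-s\cos\theta))W^\theta .
\]
Pairing with $D^\theta$ and using orthonormality gives $\langle\Gamma^{\theta,w}_s,D^\theta\rangle=0$, so the curve satisfies the orthogonality condition. Conversely, any curve of the form $\gamma+uD^\theta$ satisfying this condition obeys $u_s=-\cos\theta$. Hence the solutions are exactly the curves with $u(s)=w-s\cos\theta$ for $w\in\mathbb C$, with uniqueness given by the linear ODE. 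Holomorphy follows because $\gamma$, $T$, $N$ are holomorphic in $s$ on the non-isotropic neighbourhood and $u$ is affine in $s$.

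\textbf{Step 3 (the case $\theta=\pi/2$).} Substituting $\cos\theta=0$ and $\sin\theta=1$ gives $D^{\pi/2}=N$ and $u\equiv w$, so $\Gamma^{\pi/2,w}=\gamma+wN$. Its derivative is $(1-\kappa w)T$, which is the standard complex parallel (offset) curve at constant bilinear distance $w$.

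\textbf{Main obstacle.} The only delicate point is interpreting "arc-length $s$": it is defined only locally, and only up to an additive constant, at non-isotropic points. Shifting $s$ changes $w$ by a multiple of $\cos\theta$, so I would remark that the family $\{\Gamma^{\theta,w}\}_{w}$ is independent of this choice. Otherwise the argument is routine bookkeeping with the Frenet formulas.
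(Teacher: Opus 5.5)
Your proposal is correct and follows essentially the same route as the paper: the ansatz $\Gamma=\gamma+uD^\theta$, the decomposition of $\Gamma_s$ in the rotated frame via $D^\theta_s=-\kappa W^\theta$ and $T=\cos\theta\,D^\theta+\sin\theta\,W^\theta$, and reading off $u_s=-\cos\theta$ from orthogonality. Your explicit checks that $\{D^\theta,W^\theta\}$ is orthonormal for the bilinear form, and that the family does not depend on the arc-length base point, are harmless additions that the paper leaves implicit.
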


Notice that, substituting $u(s) = w - s \cos\theta$ back into the derivative vector yields
\[
\Gamma^{\theta, w}_s = \Big( \sin\theta - \kappa(s) (w - s \cos\theta) \Big) W^\theta(s).
\]
Consequently, the wavefront $\Gamma^{\theta, w}$ develops a singularity  at $s$ if and only if
\[
\kappa(s) (w - s \cos\theta) - \sin\theta = 0 \implies w = s \cos\theta + \frac{\sin\theta}{\kappa(s)}.
\]
Substituting this critical value of $w$ back into $\Gamma^{\theta, w}(s)$ gives
\[
\Gamma^{\theta, w}(s) = \gamma(s) + \frac{\sin\theta}{\kappa(s)} D^\theta(s) = \gamma(s) + \frac{\sin\theta \cos\theta}{\kappa(s)} T(s) + \frac{\sin^2\theta}{\kappa(s)} N(s) = E^\theta(s).
\]
Thereupon, we have the following result:
\begin{theorem}
The singular locus of the 1-parameter family of holomorphic wavefronts $\{\Gamma^{\theta, w}\}_{w \in \mathbb{C}}$ sweeps out the complex evolutoid $E^\theta$:
\[
\bigcup_{w \in \mathbb{C}} \operatorname{Cusps}\left( \Gamma^{\theta, w} \right) = E^\theta.
\]
\end{theorem}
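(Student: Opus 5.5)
We prove the two inclusions separately, using the derivative formula computed just above the statement:
\[
\Gamma^{\theta,w}_s=\bigl(\sin\theta-\kappa(s)(w-s\cos\theta)\bigr)W^\theta(s).
\]
Throughout we work on a neighbourhood where $q\neq 0$ and $\kappa\neq 0$, so that the frame of Theorem~\ref{thm:frenet} and the evolutoid formula of Theorem~\ref{thm:explicit} are available. We interpret $\operatorname{Cusps}(\Gamma^{\theta,w})$ as the image of the singular points of the parametrized front. The main obstacle is the wording ``cusps'': a singular point of a front need not be an ordinary cusp. We therefore prove the equality for singular points. We then add a remark that at a singular point $s_0$ the front is an $A_2$ cusp exactly when $s_0$ is not itself a singular point of $E^\theta$, i.e.\ when $c^\theta(s_0)\neq 0$; away from the $\lambda$-vertices of Theorem~\ref{thm:singularity} every singular point is a genuine cusp.

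\textbf{Step 1 (singular points lie on $E^\theta$).} Since $\langle W^\theta,W^\theta\rangle=\sin^2\theta+\cos^2\theta=1$, the vector $W^\theta$ never vanishes. Hence $\Gamma^{\theta,w}_s(s_0)=0$ if and only if $\sin\theta=\kappa(s_0)(w-s_0\cos\theta)$. Because $\kappa(s_0)\neq 0$, this is equivalent to $w=s_0\cos\theta+\sin\theta/\kappa(s_0)$. Substituting this value of $w$ gives
\[
\Gamma^{\theta,w}(s_0)=\gamma(s_0)+\frac{\sin\theta}{\kappa(s_0)}D^\theta(s_0).
\]
This agrees with \eqref{eq:Elambda} under $\lambda=\tan\theta$, because $\lambda/(1+\lambda^2)=\sin\theta\cos\theta$ and $T+\lambda N=D^\theta/\cos\theta$. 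So the point is $E^\theta(s_0)$.

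\textbf{Step 2 (every point of $E^\theta$ arises).} Given $s_0$, set $w_0=s_0\cos\theta+\sin\theta/\kappa(s_0)\in\mathbb C$. By Step~1, $s_0$ is a singular point of $\Gamma^{\theta,w_0}$ and its image is $E^\theta(s_0)$. Combining Steps~1 and~2 gives the set equality. The assignment $s\mapsto w(s)$ is holomorphic, so the family sweeps $E^\theta$ continuously.

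\textbf{Step 3 (cusp type).} Write $\phi(s)=\sin\theta-\kappa(s)(w-s\cos\theta)$. Then $\Gamma_s=\phi W^\theta$. Using $W^\theta_s=\kappa D^\theta$, at a zero $s_0$ of $\phi$ we get $\Gamma_{ss}=\phi_s W^\theta$ and $\Gamma_{sss}=\phi_{ss}W^\theta+2\phi_s\kappa D^\theta$. Since $\{W^\theta,D^\theta\}$ is a basis and $\kappa\neq0$, the conditions $\phi_s(s_0)\neq0$ and $\det(\Gamma_{ss},\Gamma_{sss})\neq0$ are equivalent. This nondegeneracy is exactly what yields the $A_2$ normal form $(u^2,u^3)$, with the same reparametrization argument as in Corollary~\ref{thm:A2}.

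It remains to identify $\phi_s(s_0)$. Differentiating gives $\phi_s=-\kappa_s(w-s\cos\theta)+\kappa\cos\theta$. At $w=w(s_0)$ this becomes
\[
\phi_s(s_0)=\cos\theta-\frac{\kappa_s\sin\theta}{\kappa^2}=c^\theta(s_0)\sec\theta,
\]
using the angle-parametrized form of Proposition~\ref{thm:fundamental}. Hence the singular point is an ordinary cusp precisely when $c^\theta(s_0)\neq0$, as claimed.
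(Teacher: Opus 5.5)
Your Steps 1--2 are essentially the paper's own argument: from $\Gamma^{\theta,w}_s=\bigl(\sin\theta-\kappa(w-s\cos\theta)\bigr)W^\theta$ you solve for the critical $w=s\cos\theta+\sin\theta/\kappa(s)$ and substitute to get $\gamma+\frac{\sin\theta}{\kappa}D^\theta=E^\theta$, and you also make the reverse inclusion explicit. Your Step 3 goes beyond the paper, which tacitly treats singular points as cusps: it correctly shows the front has an ordinary cusp exactly when $E^\theta$ is regular there, so the equality is exact for singular points and holds for cusps only away from the points $E^\theta(s_0)$ with $s_0$ a $\lambda$-vertex.

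There is one harmless slip in Step 3. At the critical $w$ one actually gets $\phi_s(s_0)=\kappa(s_0)\bigl(\cos\theta-\sin\theta\,\kappa_s(s_0)/\kappa(s_0)^2\bigr)=\kappa(s_0)\,c^\theta(s_0)\sec\theta$; you dropped the nonzero factor $\kappa(s_0)$, which does not affect the conclusion. Also, this identity and the use of $\lambda=\tan\theta$ in Step 1 both require $\cos\theta\neq0$. The evolute case $\theta=\pi/2$ is read off directly from $\gamma+\frac{\sin\theta}{\kappa}D^\theta$ with $D^{\pi/2}=N$.
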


\begin{remark}
For $\theta = \pi/2$, the wavefront family consists of the complex parallel curves $\Gamma^{\pi/2, w}(s) = \gamma(s) + w N(s)$. Its derivative is
\[
(\Gamma^{\pi/2, w})_s = T(s) - w \kappa(s) T(s) = \big( 1 - w \kappa(s) \big) T(s).
\]
The parallel curve develops a singularity precisely when $w = \frac{1}{\kappa(s)}$. The singular locus in $(s, w)$-space is $w = \frac{1}{\kappa(s)}$, whose image in $\mathbb{C}^2$ is
\[
E^{\pi/2}(s) = \gamma(s) + \frac{1}{\kappa(s)} N(s),
\]
which is the classical holomorphic evolute. Thus, the classical duality statement that \emph{``the evolute is the caustic of the parallel family''} holds completely in the complex analytic setting.    
\end{remark}
%%%%%%%%%%%%%%%%%%%%%%%%%%%%%%%%%%%%%%%
\section{The three-parameter discriminant and versal unfoldings}\label{sec:dis}

To study the global evolution of the evolutoid family with respect to the  angle parameter $\theta \in \mathbb{R}$, we construct the three-dimensional complex discriminant surface $\mathcal{D}_F \subset \mathbb{C}^2 \times \mathbb{R}$:
\[
\mathcal{D}_F = \left\{ (x, \theta) \in \mathbb{C}^2 \times \mathbb{R} : \exists t \in \mathbb{C} \text{ such that } F(x, \theta, t) = 0 \text{ and } F_t(x, \theta, t) = 0 \right\}.
\]
The surface $\mathcal{D}_F$ represents the union of all complex evolutoids indexed by $\theta$, i.e.,
\[
\mathcal{D}_F = \bigcup_{\theta \in \mathbb{R}} \left( E^\theta \times \{\theta\} \right).
\]
Slicing $\mathcal{D}_F$ with hyperplanes $\theta = \text{constant}$, recovers the individual 2D evolutoids $E^\theta$, complexifying the 3D discriminant construction of Giblin and Warder \cite{GiblinWarder2014}.

Now, fix $(x_0, \theta_0, t_0)$ such that $F(x_0, \theta_0, t_0) = F_t(x_0, \theta_0, t_0) = 0$. Let $f(t) = F(x_0, \theta_0, t)$. Suppose $f$ has an $A_k$-singularity at $t_0$, i.e.,
\[
f'(t_0) = \dots = f^{(k)}(t_0) = 0, \qquad f^{(k+1)}(t_0) \neq 0.
\]
For $k \le 3$, the unfolding parameters are $(x_1, x_2, \theta) \in \mathbb{C}^3$. The family $F(x, \theta, t)$ is a holomorphically versal unfolding of $f$ if the classes of $F_{x_1}, F_{x_2}, F_\theta$ span the local algebra
\[
\mathcal{Q}_f = \frac{\mathcal{O}_{\mathbb{C}, t_0}}{\langle f'(t) \rangle} \cong \operatorname{span}_{\mathbb{C}} \{1, t, \dots, t^{k-1}\}.
\]
Thus, $\theta$ acts as a genuine geometric unfolding parameter of the singularity, rather than a passive label \cite{GiblinWarder2014}.

The local geometric structure of the discriminant surface $\mathcal{D}_F \subset \mathbb{C}^3$ is classified by the singularity order of $f$:

\begin{enumerate}
    \item $A_1$ locus (smooth surface): Where $F_t = 0$ and $F_{tt} \neq 0$, $\mathcal{D}_F$ is a smooth 2-dimensional complex submanifold of $\mathbb{C}^3$.
    \item $A_2$ locus (Cuspidal edge): Where $F_t = F_{tt} = 0$ and $F_{ttt} \neq 0$, $\mathcal{D}_F$ forms a 1-dimensional singular locus of $A_2$ points, corresponding to a \emph{complex cuspidal edge}.
\end{enumerate}

\begin{proposition}
At an $A_2$-point (with $\kappa(t_0) \neq 0$), the local algebra $\mathcal{Q}_f = \operatorname{span}_{\mathbb{C}}\{1, t\}$ is two-dimensional. Since $(F_{x_1}, F_{x_2}) = W^\theta$ and $\frac{d}{ds} W^\theta = \kappa ~D^\theta \neq \mathbf{0}$, the spatial parameters $(x_1, x_2)$ alone provide a holomorphically versal unfolding:
\[
\kappa(t_0) \neq 0 \implies A_2\text{-points of } E^\theta \text{ are generically versally unfolded by } (x_1, x_2).
\]
\end{proposition}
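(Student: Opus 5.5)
Working in the complex arc-length coordinate $s$ centred at $s_0$, the task is to show that the images of $F_{x_1}$ and $F_{x_2}$ in $\mathcal{Q}_f=\mathcal{O}_{\mathbb{C},s_0}/\langle f'\rangle\cong\operatorname{span}_{\mathbb{C}}\{1,(s-s_0)\}$ span this two-dimensional space. Since $F(x,\theta,s)=\langle x-\gamma(s),W^\theta(s)\rangle$ is affine in $x$, we have $F_{x_j}(s)=W^\theta_j(s)$, the $j$-th component of $W^\theta$. The $A_2$ hypothesis $f'(s_0)=f''(s_0)=0\neq f'''(s_0)$ means $f'$ has a simple zero times a unit. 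Hence $\langle f'\rangle=\mathfrak m^2$ and the class of a germ $g$ in $\mathcal{Q}_f$ is just its $1$-jet $g(s_0)+g_s(s_0)(s-s_0)$. Versality by $(x_1,x_2)$ is therefore equivalent to the nonvanishing of the $2\times2$ determinant
\[
\det\begin{pmatrix} W^\theta_1(s_0) & W^\theta_2(s_0)\\ (W^\theta_1)_s(s_0) & (W^\theta_2)_s(s_0)\end{pmatrix}=\det\bigl(W^\theta(s_0),W^\theta_s(s_0)\bigr).
\]

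To evaluate this determinant, I would use the Frenet formulas of Theorem~\ref{thm:frenet} applied to \eqref{eq:DW}:
\[
W^\theta_s=\sin\theta\,\kappa N+\cos\theta\,\kappa T=\kappa D^\theta .
\]
Since $\det(T,N)=1$, expanding in the frame $\{T,N\}$ gives $\det(W^\theta,D^\theta)=\sin^2\theta+\cos^2\theta=1$. Therefore
\[
\det\bigl(W^\theta(s_0),W^\theta_s(s_0)\bigr)=\kappa(s_0)\neq0,
\]
and $F_{x_1},F_{x_2}$ span $\mathcal{Q}_f$. This is exactly the versality condition recalled before the statement, with $\theta$ not needed. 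It mirrors the computation $\det(\xi_0,\xi_1)=\kappa_0(1+\lambda^2)$ in Theorem~\ref{thm:localnormal}, up to the normalisation $\lambda=\tan\theta$.

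Two points need care. The first is justifying $\langle f'\rangle=\mathfrak m^2$ and the identification of classes with $1$-jets, which follows from $f'=(s-s_0)\cdot\text{unit}$. Since $\kappa(s_0)\neq0$ requires $q(s_0)\neq0$ for the frame to exist, the argument in the $s$ coordinate is legitimate, and passing back to $t$ is a holomorphic reparametrisation that preserves versality. The second, and the main one, is the use of $\sin^2\theta+\cos^2\theta=1$ for complex $\theta$: this holds identically, so $\{D^\theta,W^\theta\}$ is a genuine frame for every $\theta$. Finally I would note that by Proposition~\ref{prop:generatingdiscriminant} these $A_2$ points are precisely the ordinary cusps of $E^\theta$ from Corollary~\ref{thm:A2}, so the discriminant is locally the standard cuspidal edge (the trivial extension of the $A_2$ discriminant along $\theta$).
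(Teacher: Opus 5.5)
Your proposal is correct and follows the paper's own argument ($(F_{x_1},F_{x_2})=W^\theta$ and $W^\theta_s=\kappa D^\theta$), and it usefully supplies a step the paper leaves implicit: versality requires $\det(W^\theta,W^\theta_s)=\kappa\det(W^\theta,D^\theta)=\kappa\neq0$, not merely $W^\theta_s\neq\mathbf{0}$. One slip to fix: the $A_2$ condition $f'(s_0)=f''(s_0)=0\neq f'''(s_0)$ means $f'$ vanishes to order exactly two, $f'=(s-s_0)^2\cdot\text{unit}$, not that it has a simple zero, and this is what gives $\langle f'\rangle=\mathfrak m^2$ (a simple zero would give $\mathfrak m$ and a one-dimensional $\mathcal{Q}_f$).
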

An $A_3$-singularity occurs when $f' = f'' = f''' = 0$ and $f^{(4)} \neq 0$. For $\kappa \neq 0$, evaluating these conditions yields:
\begin{align}
    f'' = 0 &\iff \kappa^2 \cos\theta - \kappa_s \sin\theta = 0, \label{eq:A3_1} \\
    f''' = 0 &\iff 2\kappa_s^2 - \kappa ~\kappa_{ss} = 0, \label{eq:A3_2} \\
    f^{(4)} \neq 0 &\iff 6\kappa_s^3 - \kappa^2 ~ \kappa_{sss} \neq 0. \label{eq:A3_3}
\end{align}
\begin{theorem}
\label{thm:A3_versality}
Let $(x_0, \theta_0, t_0)$ be an $A_3$-point satisfying \eqref{eq:A3_1}--\eqref{eq:A3_3} with $\kappa(t_0) \neq 0$.
\begin{enumerate}
    \item If $t_0$ is not a simple isotropic point (i.e., $\kappa(t_0)^4 + \kappa_s(t_0)^2 \neq 0$), the parameter triple $(x_1, x_2, \theta)$ forms a holomorphically versal unfolding. The discriminant surface $\mathcal{D}_F \subset \mathbb{C}^3$ is equivalent to a complex Swallowtail surface. Slicing $\mathcal{D}_F$ along $\theta = \text{const}$, produces classical Lips, Beaks, or Swallowtail bifurcations in $E^\theta$.
    \item If $t_0$ be a simple isotropic point, versality fails. The slicing plane $\theta = \text{const}$, is tangent to the kernel of the unfolding map, leading to degenerate non-transverse transitions.
\end{enumerate}
\end{theorem}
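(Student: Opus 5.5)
The plan is to verify versality directly: I will compute the classes of $F_{x_1}$, $F_{x_2}$, $F_\theta$ in the local algebra $\mathcal{Q}_f\cong\operatorname{span}_{\mathbb C}\{1,s,s^2\}$ of the $A_3$ germ $f(s)=F(x_0,\theta_0,s)$. Everything is done in the arc-length coordinate $s$ centred at $s_0$, which is legitimate because $q(t_0)\neq 0$ is implicit in the use of $\kappa$. I will expand in powers of $s$ and read off the $3\times 3$ matrix of coefficients of $1,s,s^2$.

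For the spatial directions, $\partial_x F=W^\theta(s)$ componentwise. Using $W^\theta_s=\kappa D^\theta$ and $D^\theta_s=-\kappa W^\theta$, the $s$-expansion of the vector $W^\theta$ is
\[
W_0+s\,\kappa_0 D_0+\tfrac{s^2}{2}\bigl(\kappa_{s,0}D_0-\kappa_0^2W_0\bigr)+\dots
\]
Hence $F_{x_1}$ and $F_{x_2}$ span the span of the coefficient rows obtained by pairing with the basis vectors. Since $\{W_0,D_0\}$ is a basis, this spatial span is two-dimensional: it is the image of the $2\times 3$ matrix with rows $(1,0,-\kappa_0^2/2)$ and $(0,\kappa_0,\kappa_{s,0}/2)$, written in the $\{W_0,D_0\}$ coordinates.

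For the angular direction,
\[
F_\theta=\langle x-\gamma,\cos\theta\,T+\sin\theta\,N\rangle=\langle x-\gamma,D^\theta\rangle.
\]
I will evaluate it at $x_0=E^{\theta_0}(s_0)=\gamma(s_0)+\frac{\sin\theta_0}{\kappa_0}D_0$ and expand $x_0-\gamma(s)$ to second order using $\gamma_s=T$ and $T_s=\kappa N$. The resulting three coefficients give the third row.

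Versality is then equivalent to the nonvanishing of a $3\times 3$ determinant. I will simplify it using \eqref{eq:A3_1}, i.e.\ $\tan\theta_0=\kappa_0^2/\kappa_{s,0}$, together with \eqref{eq:A3_2}. The expected outcome, and the step I regard as the main obstacle, is that after clearing powers of $\kappa_0$ and $\cos\theta_0$ the determinant is a nonzero multiple of $\kappa_0^4+\kappa_{s,0}^2$. This is natural because
\[
\cos^2\theta_0=\frac{\kappa_{s,0}^2}{\kappa_0^4+\kappa_{s,0}^2},
\]
so $1+\tan^2\theta_0$ appears in the denominator and the relation degenerates exactly when $\kappa_0^4+\kappa_{s,0}^2=0$, i.e.\ when the angle $\theta_0$ becomes isotropic ($\lambda=\tan\theta_0=\pm i$ in the language of Remark~\ref{rem:Dpmi}). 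The bookkeeping is error-prone, so I would do it in the frame $\{T_0,N_0\}$ and check it against the limit $\theta_0\to\pi/2$, which is the evolute case where the vertex gives the classical swallowtail of the parallel family.

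Given a nonzero determinant, part (1) follows from the standard theorem that a versal unfolding of an $A_3$ germ with three parameters has discriminant diffeomorphic to the swallowtail. The Lips, Beaks and Swallowtail pictures are then the generic sections $\theta=$ const of this surface, classified by the position of $\partial_\theta$ relative to the cuspidal edges, as in Giblin--Warder.

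For part (2), when $\kappa_0^4+\kappa_{s,0}^2=0$ the determinant vanishes. I will exhibit the kernel vector, which has a nonzero $\theta$-component, so that the restriction to $\theta=$ const fails to be transverse to the swallowtail stratification. This yields the claimed degenerate sections.
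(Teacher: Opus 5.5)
Your argument for part (1) is the paper's argument. Both reduce versality to the $3\times 3$ matrix of $2$-jets of $F_{x_1},F_{x_2},F_\theta$ in $\mathcal{O}/\langle (s-s_0)^3\rangle$ and simplify its determinant with \eqref{eq:A3_1}. Your first two rows are correct. The third row is $(\sin\theta_0/\kappa_0,\,-\cos\theta_0,\,0)$; the quadratic coefficient vanishes because $f(s_0)=f'(s_0)=0$. The determinant is then $\tfrac12(\kappa_0^2\sin\theta_0+\kappa_{s,0}\cos\theta_0)$, which matches the paper up to the factor $\tfrac12$. Avoid dividing by $\kappa_{s,0}$ through $\tan\theta_0$, since that fails when $\kappa_{s,0}=0$. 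Because \eqref{eq:A3_1} and $\kappa_0\neq0$ force $\sin\theta_0\neq0$, use $\cot\theta_0=\kappa_{s,0}/\kappa_0^2$ instead, which gives
\[
\det=\frac{\sin\theta_0}{2\kappa_0^2}\bigl(\kappa_0^4+\kappa_{s,0}^2\bigr)=\frac{\kappa_0^2}{2\sin\theta_0}.
\]

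The genuine gap is part (2): the case you plan to analyse never occurs, so there is no kernel vector to exhibit.

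Your own identity $\cos^2\theta_0(\kappa_0^4+\kappa_{s,0}^2)=\kappa_{s,0}^2$, obtained by squaring \eqref{eq:A3_1}, already shows this. If $\kappa_0^4+\kappa_{s,0}^2=0$, then $\kappa_{s,0}=0$, and then $\kappa_0=0$. Equivalently, $\tan\theta_0=\pm i$ has no solution $\theta_0\in\mathbb{C}$, and the isotropic rulings $\lambda=\pm i$ are excluded from the family anyway (Remark~\ref{rem:Dpmi}). The displayed formula makes this explicit: the determinant equals $\kappa_0^2/(2\sin\theta_0)\neq0$ at every $A_3$-point with $\kappa_0\neq0$ and finite $\theta_0$. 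So $(x_1,x_2,\theta)$ is always versal, the hypothesis of (1) is automatic, and (2) can hold only vacuously.

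Your plan also never relates ``simple isotropic point'' ($q(t_0)=0$) to $\kappa^4+\kappa_s^2=0$; you take the parenthetical as a definition. Yet you note yourself that $q(t_0)\neq0$ is implicit in using $\kappa$ and arc length, and that already rules out $t_0$ being isotropic.

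The paper tries to supply this link through blow-up asymptotics. It writes $\kappa^4+\kappa_s^2\sim\frac{\Delta(t_0)^2}{a^6(t-t_0)^6}\bigl(\Delta(t_0)^2+\tfrac94a^2\bigr)$ with $a=q'(t_0)$, and claims degeneracy when $\Delta(t_0)=\pm\tfrac32 ia$. But Lemma~\ref{lem:qprime} gives $a=2\varepsilon i\Delta(t_0)$, so the bracket equals $-8\Delta(t_0)^2\neq0$, and the criterion can never be met. Neither your route nor the paper's produces a degenerate $A_3$-point. What the computation actually proves is unconditional versality.

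A smaller point concerns lips and beaks. They are not sections of the swallowtail near its $A_3$-point; sections there transverse to the cuspidal-edge direction give the swallowtail transition. Lips and beaks arise at $A_2$-points where $\theta$ restricted to the cuspidal edge has a critical point. That needs a separate argument, which neither you nor the paper supplies.
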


\begin{proof}
The 2-jet matrix of $F_{x_1}, F_{x_2}, F_\theta$ at an $A_3$-point has determinant $\Delta = \kappa^2 \sin\theta + \kappa_s \cos\theta$. Using \eqref{eq:A3_1}, $\cot\theta = \frac{\kappa_s}{\kappa^2}$, so:
\[
\Delta^2 = (\kappa^2 \sin\theta + \kappa_s \cos\theta)^2 = \sin^2\theta \left( \kappa^4 + \kappa_s^2 \right).
\]
Since $\sin\theta \neq 0$ (otherwise $\kappa = 0$), so we have  
\[\Delta \neq 0 \iff \kappa^4 + \kappa_s^2 \neq 0.\]

At a simple isotropic point $t_0$, with $q(t) = a~(t-t_0) + O((t-t_0)^2)$, where $a\in\C\setminus\{0\}$  and $\Delta(t_0) \neq 0$. As $t \to t_0$ we have
\[
\kappa(t)^4 + \kappa_s(t)^2 = \frac{\Delta(t_0)^2}{a^6 (t-t_0)^6} \left( \Delta(t_0)^2 + \frac{9}{4} a^2 \right) + O\left((t-t_0)^{-5}\right).
\]
Factoring out $(t-t_0)^{-6}$ shows that $\kappa^4 + \kappa_s^2 = 0$ if and only if $\Delta(t_0)^2 + \frac{9}{4} a^2 = 0$. Therefore $\Delta(t_0) = \pm \frac{3}{2} i a$. Consequently, if the underlying curve satisfies $\Delta(t_0) = \pm \frac{3}{2} i a$, the versality condition $\kappa^4 + \kappa_s^2 \neq 0$ fails near the isotropic point, forcing a non-transverse degenerate section of the complex Swallowtail surface $\mathcal{D}_F$.
\end{proof}

\subsection*{Acknowledgments}  
We are grateful to Farid Tari and Amanda Falqueto for their careful and constructive review of our manuscript.
\subsection*{Declarations}
\paragraph{\textbf{Conflicts of interest:}} The authors declare no conflict of interest.

 \end{document}